\documentclass[a4paper,12pt]{article}

\usepackage[margin=3cm,footskip=1cm]{geometry}

\usepackage[T1]{fontenc}

\usepackage{amsmath,amssymb,amsthm,bm,mathtools,xspace,booktabs,url}
\usepackage{graphicx,etoolbox}

\usepackage{amsfonts,dsfont}
\usepackage[english] {babel}
\usepackage{array}

%
\usepackage[shortlabels]{enumitem}
%
\setlist{
  listparindent=\parindent,
  parsep=0pt,
}

\AtBeginEnvironment{thebibliography}{%
  \small
  \setlength\itemsep{0.75em plus 0.5em minus 0.75em}%
}
\usepackage{caption}
\captionsetup{
  tableposition=top, 
  font=small,
  labelfont=bf}

\usepackage[raggedright]{titlesec}

\numberwithin{equation}{section} 
\theoremstyle{plain} 
\newtheorem{theorem}{Theorem}[section]
\newtheorem{lemma}[theorem]{Lemma}

\newtheorem{definition}[theorem]{Definition}
\newtheorem{corollary}[theorem]{Corollary}


\theoremstyle{definition} 
\newtheorem{example}[theorem]{Example}
\newtheorem{remark}[theorem]{Remark}
\newtheorem{property}[theorem]{Property}
\usepackage{authblk}


\makeatletter
\newcommand\CorrespondingAuthor[1]{%
  \begingroup%
  \def\@makefnmark{}%
  \footnotetext{Corresponding author: #1}%
  \endgroup%
}
\makeatother

\makeatletter
\renewenvironment{abstract}{%
  \small%
  \begin{center}%
    \bfseries \abstractname\vspace{-.5em}\vspace{\z@}%
  \end{center}%
  \quote%
}{\endquote}
\makeatother

%
%
\usepackage[above,below,section]{placeins}

\frenchspacing

\usepackage[load-configurations=abbreviations]{siunitx}

\usepackage[all]{onlyamsmath}

\pagestyle{plain} 

\usepackage{lipsum}

%
%
%
%

\newcommand{\R}{\mathbb{R}}
\DeclareMathOperator{\supp}{supp}

\newcommand{\La}{\mathbb{L}}

\newcommand{\eps}{\varepsilon}
\DeclareMathOperator{\conv}{conv}

\DeclareMathOperator{\indre}{int}

\DeclareMathOperator{\tr}{Tr}
\newcommand{\Ha}{\mathcal{H}}

\newcommand{\II}{\textit{II}}

\DeclareMathOperator{\Err}{Err}

\begin{document}

\title{Estimation of intrinsic volumes from digital grey-scale images}

\author{Anne Marie Svane}

\affil[1]{Department of Mathematical Sciences\\ 
Aarhus University\\ amsvane@imf.au.dk}
\date{}

\maketitle

\begin{abstract}
Local algorithms are common tools for estimating intrinsic volumes from black-and-white digital images. However, these algorithms are typically biased in the design based setting, even when the resolution tends to infinity. Moreover, images recorded in practice are most often blurred grey-scale images rather than black-and-white. In this paper, an extended definition of local algorithms, applying directly to grey-scale images without thresholding, is suggested. We investigate the asymptotics of these new algorithms when the resolution tends to infinity and apply this to construct estimators for surface area and integrated mean curvature that are asymptotically unbiased in certain natural settings.
\end{abstract} 

\section{Introduction}
In this paper, we shall investigate the class of so-called local algorithms \cite{digital, am3} used for estimation of surface area and integrated mean curvature from  digital images. These algorithms are commonly used in applied sciences for analysing di\-gi\-tal output data from e.g.\ microscopes and scanners, see \cite{digital,lindblad,mecke2}. 
The main reason for the po\-pu\-la\-ri\-ty of local algorithms is that they allow simple linear time implementations~\cite{OM}. However, this efficiency is usually paid for by a lack of accuracy \cite{kampf2,am3}.

Local algorithms have so far only been defined for black-and-white images, see e.g.\ \cite{am2}. In a black-and-white image of a geometric object $X\subseteq \R^d$, each pixel is coloured black if the midpoint lies in $X$ and white otherwise. The use of local algorithms thus requires that we are able to measure precisely whether or not a given point belongs to $X$.  
In practice, however, such exact measurements are typically not possible, since the light coming from each point is spread out following a point spread function (PSF). The result is a grey-scale image where each pixel is assigned a grey-tone corresponding to a measured light intensity between 0 and 1. The standard way of overcoming this problem is to convert the image to black-and-white by choosing a threshold limit $\beta \in [0,1)$ and converting all pixels with grey-value greater than $\beta$ to black and all others to white. 

As a natural way of assessing a local algorithm, we test it in the design based setting where the object under study has been randomly translated with respect to the observer before the image is recorded. Ideally, the estimator should be unbiased, at least asymptotically when the resolution tends to infinity. For black-and-white images, surface area and integrated mean curvature can generally not be estimated without an asymptotic bias unless it coincides with the Euler cha\-rac\-te\-ris\-tic \cite{kampf2,am3,johanna}.
To make the asymptotic behaviour of grey-scale images well-defined it is necessary to make some assumptions on how the PSF changes with increased resolution. In this paper, we will assume that measurements become more accurate with increased resolution, see the precise assumption and a discussion of this in Subsection~\ref{models} below. 

Most previous asymptotic studies for black-and-white images do not take the thresholding process into account. For volume estimators, some first studies for simple PSF's were performed in \cite{hall,rataj} and in \cite{stelldinger}, estimators for the Euler characteristic are studied in 2D. The first purpose of this paper is to study the effect of thresholding on local algorithms.  

On the other hand, most of the information hidden in the grey-values is thrown away in the thresholding process. The second purpose of this paper is to give an extended definition of local algorithms, see Subsection \ref{lag-s}, that exploits the available information in the grey-scale image better but still leads to fast computations. Finally we are going to study the asymptotic bias of these estimators. 

%

\subsection{Results and applications}
The asymptotic studies of local algorithms will be based on three theoretical formulas extending \cite[Theorem 2]{rataj}. In Section \ref{1storder} this theorem is extended to larger classes of PSF's and in Section \ref{sof} it is extended to a second order formula. The techniques involved in the proofs are similar to those used in \cite{rataj} and \cite{am2}. 

From the theoretical formulas we obtain some applications to surface area estimation in Section \ref{app1}, and in  Section \ref{app2} we apply the results to estimators for the integrated mean curvature and the first order bias of surface area estimators in finite high resolution. We summarize some of the main findings here.

Assuming only mild conditions on the PSF, we first consider surface area estimators applied to the class of so-called gentle sets, see Definition~\ref{gentle} below. This class includes for instance all manifolds and all polyconvex sets. For black-and-white local algorithms applied to thresholded images, we find that the asymptotic bias is the same as for black-and-white images, see Subsection~\ref{thri}. In particular, local algorithms applied to thresholded images are not asymptotically unbiased.
 
In contrast to this, Subsection \ref{direct} shows that if the PSF is rotation invariant, asymptotically unbiased estimators based directly on the grey-values are plenty. As a simple example, the estimator counting the number of grey-values in some interval $I\subseteq (0,1)$ has the correct asymptotic mean up to a constant factor. Moreover, if $I$ is symmetric around $\frac{1}{2}$, the first order bias in high resolution vanishes. This algorithm is clearly both simple and fast and it can even be applied if the grey-values in the output data are only given discretely.

For more general classes of PSF's we obtain a description of the worst case asymptotic error. This could be used to search for estimators minimizing the asymptotic bias. 

With stronger assumptions on both the PSF and the smoothness of the boun\-da\-ry of the underlying set $X$, we find in Subsection \ref{imc} that if the PSF is rotation invariant, also asymptotically unbiased estimators for the integrated mean curvature do exist. One example is given by the estimator that counts the number of grey-values in the interval $(\beta,\frac{1}{2})$ and subtracts the number of grey-values in $(\frac{1}{2},1-\beta)$ for a suitable $\beta \in (0,\frac{1}{2})$. For thresholded images, the asymptotic mean is a little more complicated than in the black-and-white case, now depending on the PSF, see Subsection \ref{thri2}.

All results of this paper are theoretical. The practical usefulness of local algorithms for grey-scale images is discussed in the final Section \ref{discuss}.

\section{Local digital algorithms}
In this section we introduce local digital estimators for the surface area $2V_{d-1}(X)$ and integrated mean curvature $2\pi(d-1)^{-1}V_{d-2}(X)$ of a `sufficiently nice' set $X\subseteq \R^d$. Both quantities are examples of the so-called intrinsic volumes $V_q$, $q=0,\dots, d$, hence we shall use this term when referring to both, see e.g.\ \cite{schneider}.

\subsection{Models for digital images}\label{models}
Let $\La$ be the lattice in $\R^d$ spanned by the ordered basis $v_1,\dots,v_d\in \R^d$ and let  $C_v=\bigoplus_{i=1}^d[0,v_i]$ be the fundamental cell of the lattice. As we shall later be scaling the lattice, we may as well assume that the volume $\det(v_1,\dots,v_d)$ of $C_v$ is 1. For $c\in \R^d$, we let $\La_c=\La+c$ denote the translated lattice. 

We shall think of the pixels in a digital image as the translations of $C_v$ that have midpoints in $\La_c$.
Let $X\subseteq \R^d$ be a geometric object. The information about $X$ hidden in a black-and-white image corresponds to  the set of black pixel midpoints $X\cap \La_c$ . 

For grey-scale images, we assume that the light coming from each point is spread out following a point spread function  which is independent of the position of the point. That is, the light that reaches the observer is given by the intensity function
\begin{equation*}
\theta^{X,\rho} : \R^d \to [0,1]
\end{equation*}
where the intensity measured at $x\in \R^d$ is given by
\begin{equation*}
\theta^{X,\rho}(x)= \int_{X} \rho(z-x) dz.
\end{equation*}
Here $\rho$ is the PSF, which is assumed to be a measurable function satisfying $\rho \geq 0$ and $\int_{\R^d} \rho d\Ha^d =1 $ where $\Ha^d$ is the $d$-dimensional Hausdorff measure.
A digital image in the grey-scale setting is the restriction of $\theta^{X,\rho}$ to the observation lattice $\La_c$.

A simple example of a PSF is $\rho_B = \Ha^d(B)^{-1}\mathds{1}_{B}$ where $B\subseteq \R^d$ is a Borel set of non-zero finite volume and $\mathds{1}_B$ denotes the indicator function. At every point $x\in \R^d$, $\theta^{X,\rho_B}(x)$ measures the volume of $(x+B)\cap X$. For instance, if $B$ equals $C_0=C_v-\frac{1}{2}\sum_{i=1}^d v_i$, the grey-value of a pixel measures the fraction of the pixel that is contained in $X$. This PSF is studied thoroughly in 2D in \cite{hall}. Another interesting example is when $B$ is the ball $B(R)$ of radius $R>0$. It may also be relevant to consider various continuous approximations to these caused by imprecisions of measurements near the boundary of $B$. 

In other applications, it is more relevant to consider a PSF with non-compact support. The main example to have in mind is the Gaussian
\begin{equation*}
\rho_{Gauss}(x)=\frac{1}{(\sqrt{2\pi }\sigma)^d}\text{exp}\Big(-\frac{|x|^2}{2\sigma^2}\Big)
\end{equation*}
which yields a good approximation of most PSF's occurring in practice \cite{kothe}. 

We say that a PSF is reflection invariant if $\rho(x)=\rho(-x)$ and rotation invariant if $\rho(x)$ depends only on $|x|$. For instance,  $\rho_{B(R)}$ and $\rho_{Gauss}$ are rotation invariant, while $\rho_{C_0}$ is only reflection invariant.

A change of resolution to $a^{-1}$ for some $a>0$ corresponds to a change of lattice from $\La$ to $a\La$. We assume that the precision of the measurements changes in such a way that the PSF in resolution $a^{-1}$ is
\begin{equation*}
\rho_a(x)=a^{-d}\rho(a^{-1}x).
\end{equation*}
The corresponding intensity function is denoted
\begin{equation*}
\theta_a^{X,\rho}(x)= \int_{X} \rho_a(z-x) dz =a^{-d} \int_{X} \rho(a^{-1}(z-x)) dz.
\end{equation*}
The PSF is omitted from the notation whenever it is clear from the context. 

In some applications, e.g.\ for $\rho_B$ or in some cases where the blurring is caused by the optical device, this transformation of $\rho$ with the resolution is natural. For $\rho_{C_0}$ it simply means that pixels become smaller in higher resolution. In other situations, e.g.\ if the light is spread out before it reaches the lense, it may be impossible for the observer to affect the blurring or a different transformation is more realistic. However, in this paper we restrict ourselves to the above setting.

\subsection{Local algorithms for black-and-white images}
We first recall the definition of local algorithms in the case of black-and-white images, see e.g.\ \cite[Section 2]{am3} for more details and justifications of such algorithms. 

An $n\times \dots \times n$ lattice cell is a set of the form $C_z^n=(z+\bigoplus_{i=1}^d [0,nv_i))$ where $z\in \La$. The set of lattice points lying in such a cell is denoted by $C_{z,0}^n=C_z^n\cap \La $. 
%
%
An $n\times \dotsm \times n$ configuration is a pair $(B,W)$ where $B, W \subseteq C_{0,0}^n$ are disjoint with $B\cup W = C_{0,0}^n$. 
We index these by $(B_l,W_l)$ for $l=0,\dots,2^{n^d}-1$ where $B_0=W_{2^{n^d}-1}=\emptyset$. 

A local digital algorithm in the sense of \cite{am3} estimates $V_q$ by a weighted sum of configuration counts:
\begin{definition}\label{funddef}
A local digital estimator $\hat{V}_q$ for $V_q$ based on the image $X\cap a\La_c$ is an estimator of the form
\begin{equation*}\label{funddefeq}
\hat{V}_q^{ a\La_c}(X)= a^q\sum_{l=1}^{2^{n^d}-1} w_l N_l^{a\La_c}(X )
\end{equation*} 
where 
\begin{equation*}
N_l^{a\La_c}(X )= \sum_{z \in a\La_c }  \mathds{1}_{\{z+aB_l\subseteq X\}}\mathds{1}_{\{ z+ aW_l \subseteq \R^d \backslash X\}}
\end{equation*} 
is the total number of occurrences of the configuration $(B_l,W_l)$ in the image and the constants $w_l \in \R$ are called the weights.
\end{definition}
Note that we assume the weights to be homogeneous in the sense of \cite{am3}.
In  all the algorithms used in practice, the weights are homogeneous. 

Suppose a grey-scale digital image is thresholded at level $\beta \in [0,1)$. This means that the set of black lattice points is now $\{z\in a\La_c \mid \theta^X_a(z)>\beta \}$. Replacing $X\cap a\La_c$  by this set in Definition \ref{funddefeq},
the resulting estimator becomes
\begin{equation}\label{thresholdV}
\hat{V}(\beta)_q^{ a\La_c}(X)= a^q\sum_{l=1}^{2^{n^d}-1} w_lN(\beta)_l^{a\La_c}(X) 
\end{equation} 
where
\begin{equation*}
N(\beta)_l^{a\La_c}(X) = \sum_{z \in a\La_c }  \mathds{1}_{ \{\theta^X_a(z + a{B}_l)\subseteq (\beta,1]\}}\mathds{1}_{\{ \theta^X_a(z + a{W}_l)\subseteq [0,\beta]\}}.
\end{equation*} 

\subsection{Local algorithms in the grey-scale setting}\label{lag-s}
We now suggest a more general definition of local algorithms based directly on grey-scale images. 
An $n\times \dots \times n$ configuration in the grey-scale setting is a point $\theta_a^X(aC_{z,0}^n) \in [0,1]^{n^d}$. Each configuration must thus be weighted not by a finite collection of weights but by a function $f:[0,1]^{n^d}\to \R$. This leads to the following definition:
\begin{definition}\label{greyest}
A local estimator $\hat{V}_q$ for $V_q$ is an estimator of the form 
\begin{equation*}
\hat{V}(f)_q^{a\La_c}(X)=a^q\sum_{z \in \La_c} f(\theta^X_a(aC_{z,0}^n))
\end{equation*}
where $f:[0,1]^{n^d} \to \R$ is a Borel function.


To ensure finiteness of estimators on compact sets, we assume that $f(0)=0$ and, if $\rho $ has non-compact support, we also assume $\supp f \subseteq (0,1]$. 

To ensure integrability of $z \mapsto f\circ \theta^X_a(aC_{z,0}^n)$ when $X$ is compact, we moreover assume that $f$ is bounded.
\end{definition}

In the definition, one could of course let $f$ depend on both lattice distance $a>0$ and position $z\in \R^d$, but due to the homogeneity and translation invariance of intrinsic volumes, we restrict ourselves to the estimators in  Definition \ref{greyest}. 

Algorithms of this type have already been considered in \cite{hall} and \cite{mantz}. Also \eqref{thresholdV} is a special case of this, but Definition \ref{greyest} allows a much more refined use of the grey-values.
Apart from \eqref{thresholdV}, we shall mainly consider estimators with $n=1$, corresponding to estimators of the form
\begin{equation}\label{n1}
\hat{V}(f)_q^{a\La_c}(X)=a^q\sum_{z\in \La_c} f(\theta^X_a(az))
\end{equation}
where $f:[0,1] \to \R$ is as in Definition \ref{greyest}.

\subsection{Convergence in the design based setting}
We shall investigate local algorithms for digital grey-scale images in the design based setting. This may be modeled as the situation where $X\subseteq \R^d$ is held fixed while the observation lattice $\La_c$ is the translation of $\La $ by a uniform random translation vector $c\in C_v$. The mean of a local estimator applied to a grey-scale  image is then
\begin{equation}\label{mean}
E\hat{V}(f)_q^{a\La_c}(X)=a^qE\sum_{z\in \La_c} f(\theta^X_a(aC_{z,0}^n)) = a^{q-d}\int_{\R^d}f \circ \theta^X_a(z+aC_{0,0}^n) \Ha^d(dz).
\end{equation}

As a natural way of assessing a local algorithm in the design based setting, we study the mean estimator when the resolution goes to infinity. Ideally, the algorithm would be asymptotically unbiased, i.e.
\begin{equation*}
\lim_{a\to 0} E\hat{V}(f)_q^{a\La_c}(X)={V}_q(X)
\end{equation*}
for all sets $X$ in some family $\mathcal{S}$ of subsets of $\R^d$.

In the case of surface area estimation, we more generally consider the worst case asymptotic relative mean error as a measure for how well an algorithm works:
\begin{equation*}
\text{Err}(\hat{V}(f)_{d-1})=\sup_{X\in \mathcal{S}}\bigg|\frac{\lim_{a\to 0}E\hat{V}(f)_{d-1}^{a\La_c}(X)-{V}_{d-1}(X)}{{V}_{d-1}(X)}\bigg|.
\end{equation*}

\section{First order formulas}\label{1storder}
We first derive some abstract formulas, from which the first order asymptotic behaviour of the integral in \eqref{mean} can be determined.
These extend the formula by Kiderlen and Rataj given  in \cite[Theorem 2]{rataj} for PSF's of the form $\rho_B$ to some larger classes of PSF's. Though only considered in their paper as a correction term to volume estimators, the formulas have applications to surface area estimation as well. 
We first introduce some notation and state their results in the language of the present paper. 

For a closed set $X\subseteq \R^d$, we let $\text{exo}(X)$ denote the points in $\R^d$ not having a unique nearest point in $X$. Let $\xi_X : \R^d\backslash \text{exo}(X) \to X$ be the natural projection taking a point in $\R^d\backslash \text{exo}(X)$ to its unique nearest point in $X$. We define the normal bundle of $X$ to be the set
\begin{equation*}
N(X)=\big\{\big(x,\tfrac{z-x}{|z-x|}\big)\in X\times S^{d-1}\, \big|\, z\in \R^d\backslash (X\cup \text{exo}(X)),\, \xi_X(z)=x \big\}.
\end{equation*}
For $(x,n)\in N(X)$ we define the reach
\begin{equation*}
\delta(X;x,n)=\inf\{t\geq 0 \mid x+tn \in \text{exo}(X)\}\in (0,\infty].
\end{equation*}

Following \cite{rataj}, we introduce the class of gentle sets:
\begin{definition}\label{gentle}
A closed set $X\subseteq \R^d $ is called gentle if
\begin{itemize}
\item[(i)] $\Ha^{d-1}(N(\partial X) \cap (B \times S^{d-1}))<\infty$ for any bounded Borel set $B\subseteq \R^d$.
\item[(ii)] For $\Ha^{d-1}$--almost all $x\in \partial X$ there exist two balls $B_{in},B_{out}\subseteq \R^d$ both containing $x$ and such that $B_{in}\subseteq X$, $\indre(B_{out})\subseteq \R^d \backslash X$.
\end{itemize}
\end{definition}
The condition (ii) in the definition means that for almost all $x\in \partial X$ there is a unique pair $(x,n(x))\in N(X)$ with $(x,n(x)),(x,-n(x))\in N(\partial X)$.

For $n\in  S^{d-1}$, we let $H_{t,n}^-$ denote the halfspace  $\{x\in \R^d\mid \langle x,n\rangle \leq t\}$. For short we sometimes write $H_n=H_{0,n}^-$. For a set $S\subseteq \R^d$, $h(S,n)=\sup \{\langle s,n\rangle\mid {s\in S}\}$ denotes the support function and $\check{S}=\{-s\mid s\in S\}$. Finally, $\oplus$ denotes Minkowski addition and for $t\in \R$, we use the notation $t^+=t\vee 0 =\max\{t,0\}$. 

We are now ready to state the first order formula \cite[Theorem 2]{rataj}:
\begin{theorem}[Kiderlen, Rataj]\label{knownform}
Let  $X \subseteq \R^d$ be a closed gentle set and $A\subseteq \R^d$ be bounded measurable. Let $ \beta,\omega \in (0, 1]$ and  $B,W,P,Q\subseteq \R^d$ non-empty compact with $\Ha^{d}(P),\Ha^d(Q)>0$. Then
\begin{align*}
\MoveEqLeft \lim_{a\to 0} a^{-1}\int_{\xi^{-1}_{\partial X}(A)}  \mathds{1}_{\big\{\theta_a^{X,\rho_P}(x+aB)\subseteq[\beta,1]\big\}} \mathds{1}_{\big\{ \theta_a^{X,\rho_Q}(x+aW)\subseteq [0,\omega)\big\}}dx \\
&=\int_{\partial X \cap A} (\tilde{\varphi}^{\rho_{P}}(\beta,n)-\tilde{\varphi}^{\rho_{Q}}(\omega,n)-h({B}\oplus \check{W},n))^+d\Ha^{d-1}
\end{align*}
where
\begin{equation*}
\tilde{\varphi}^{\rho}(\beta,n) = \sup\{t\in\R \mid \theta^{H_n,\rho}(tn) \geq \beta \}  
\end{equation*}
for $n\in S^{d-1}$ and $\beta \in (0,1]$.
\end{theorem}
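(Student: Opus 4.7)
My plan is to reduce the integral over the tube $\xi^{-1}_{\partial X}(A)$ to an iterated integral over the normal bundle, rescale along the normal to cancel the $a^{-1}$ prefactor, and identify the pointwise limit via a local halfspace approximation. By gentleness (ii) there is a unique outward normal $n=n(x')$ for $\Ha^{d-1}$-a.e.\ $x'\in\partial X$, so every $x\in \xi^{-1}_{\partial X}(A)\setminus \partial X$ writes uniquely as $x=x'+tn(x')$ with $t\in(-\delta^-(x'),\delta^+(x'))$. A coarea/tube decomposition, whose validity on gentle sets rests on condition (i), factorises $dx=J(x',t)\,dt\,d\Ha^{d-1}(x')$ with $J(x',t)\to 1$ as $t\to 0$. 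The substitution $t=as$ then turns $a^{-1}\,dx$ into $J(x',as)\,ds\,d\Ha^{d-1}(x')$, so the task reduces to passing to the limit under the iterated integral.

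For the pointwise analysis I would fix a two-sided tangent point $x'$ and set $X_a:=a^{-1}(X-x')$. Using $\rho_P=\Ha^d(P)^{-1}\mathds{1}_P$ one obtains
\begin{equation*}
\theta_a^{X,\rho_P}(x'+asn+ab)=\Ha^d(P)^{-1}\,\Ha^d\bigl(X_a\cap(sn+b+P)\bigr).
\end{equation*}
Gentleness sandwiches $X_a$ between rescaled versions of the interior and exterior osculating balls, so $X_a\to H_n$ in $\Ha^d$-measure on compacta. This drives the right-hand side to $\theta^{H_n,\rho_P}\bigl((s+\langle b,n\rangle)n\bigr)$, using that $\theta^{H_n,\rho_P}$ depends only on the $n$-component thanks to translation invariance of $H_n$ in tangential directions. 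Since this function is monotone nonincreasing in $s+\langle b,n\rangle$, the condition $\theta_a^{X,\rho_P}(x'+asn+ab)\ge\beta$ for every $b\in B$ collapses in the limit to $s\le \tilde\varphi^{\rho_P}(\beta,n)-h(B,n)$, and the parallel argument for the $(Q,W,\omega)$-factor gives $s>\tilde\varphi^{\rho_Q}(\omega,n)+h(\check W,n)$. Intersecting the two bounds and invoking $h(B,n)+h(\check W,n)=h(B\oplus\check W,n)$ produces an $s$-interval whose length is exactly the positive part on the right-hand side.

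To swap limit and integration I would exhibit a dominant: outside a bounded $s$-strip determined by $h(B\oplus\check W,n)$, $\tilde\varphi^{\rho_P}(\beta,n)$ and $\tilde\varphi^{\rho_Q}(\omega,n)$, the two indicators force $\theta_a$ close to $0$ or $1$, so the integrand vanishes for all small $a$; this furnishes an $s$-compactly supported uniform bound that, together with (i), is integrable in $x'$ on bounded sets, and Lebesgue dominated convergence completes the proof. The hardest step is the pointwise convergence just sketched: one must upgrade the $\Ha^d$-a.e.\ convergence $X_a\to H_n$ to convergence of $\theta_a^{X,\rho_P}(x'+asn+ab)$ uniformly in $b\in B$ at $\Ha^{d-1}$-a.e.\ boundary point, and show that the set of $s$ for which the limiting halfspace value equals $\beta$ (or $\omega$) for some $b\in B$ has Lebesgue measure zero, so that the indicator converges a.e.\ in $s$. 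Both issues are handled using $\Ha^d(P),\Ha^d(Q)>0$—which makes $t\mapsto\theta^{H_n,\rho_P}(tn)$ continuous and strictly decreasing on its effective support—combined with the compactness of $B$ and $W$.
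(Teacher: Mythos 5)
Your overall strategy (tube decomposition along normals, rescaling $t=as$, halfspace approximation via the osculating balls, dominated convergence) is the same one the paper uses for its generalisation (Theorem \ref{first1}), but the key pointwise step has a genuine gap. You reduce the inner limit to a.e.\ convergence of the indicators in $s$, and to get this you assert that $\Ha^d(P)>0$ makes $t\mapsto\theta^{H_n,\rho_P}(tn)$ \emph{strictly} decreasing on its effective support. That is false for general non-empty compact $P$ of positive measure: take $P$ to be two disjoint balls separated in the direction $n$; then $t\mapsto\theta^{H_n,\rho_P}(tn)$ is constant equal to $\tfrac12$ on an interval of positive length. If $\beta$ (or $\omega$) happens to be such a plateau value, the set of $s$ for which the limiting halfspace value equals $\beta$ has positive Lebesgue measure, and on that set $\theta_a^{X,\rho_P}\to\beta$ possibly from below, so $\mathds{1}_{\{\theta_a\geq\beta\}}$ need not converge pointwise at all. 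Since the theorem is claimed for every $\beta,\omega\in(0,1]$, not just for levels with singleton preimage, your argument breaks exactly at these levels. The repair — which is what both Kiderlen--Rataj and the proof of Theorem \ref{first1} do — is to avoid pointwise convergence of the indicator entirely: sandwich $\theta_a^X$ between $\theta_a^{B_{in}}$ and $\theta_a^{\R^d\backslash B_{out}}$, observe that these lie between normal shifts $\theta_a^{H}(\cdot\pm\nu)$ of the \emph{monotone} halfspace profile with $a\nu\leq r-\sqrt{r^2-(2aD)^2}\in O(a^2)$, and conclude that the $s$-set cut out by the indicators is nested between two intervals whose endpoints converge to the same limits; this forces convergence of the $s$-integral (an interval length) even where the integrand does not converge.

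A secondary point: for gentle sets $\partial X$ need not be a manifold, so the factorisation $dx=J(x',t)\,dt\,d\Ha^{d-1}(x')$ with $J\to1$ is not a routine coarea statement. The paper instead invokes the local Steiner-type formula of Hug--Last--Weil, which writes the integral as $\sum_m m\kappa_m\int_{N(\partial X)}\int_0^{\delta}t^{m-1}(\cdots)\,dt\,\mu_{d-m}(d(x,n))$ with signed curvature measures; one then has to check that only the $m=1$ term survives at order $a$ and identify $\mu_{d-1}$ with the surface measure on the two-sided normal bundle. Your Jacobian heuristic captures the right answer but needs this machinery (or an equivalent) to be legitimate, and condition (i) of gentleness alone does not hand it to you.
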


Observe in the definition of $\tilde{\varphi}^{\rho}$ that the continuous function 
\begin{equation*}
t\mapsto \theta^{H_n,\rho}(tn) = \theta^{H^-_{-t,n},\rho}(0)=\theta^{H_n,\rho}_a(atn)
\end{equation*}
is decreasing so that $\tilde{\varphi}^{\rho}(\beta,n) $ is finite decreasing for $\beta \in (0,1]$.


\subsection{The case of compact support}
We first generalize Theorem \ref{knownform} to PSF's that are almost everywhere bounded and compactly supported. Note how the open and closed ends of the intervals have been switched in the statement of the theorem. For this reason, the functions $\tilde{\varphi}$ are replaced by
\begin{equation*}
\varphi^{\rho}(\beta,n) = \text{inf}\{t\in \R \mid \theta^{H_n,\rho}(tn) \leq \beta \} = \text{sup}\{t \in \R \mid \theta^{H_n,\rho}(tn) > \beta \}.
\end{equation*}

\begin{theorem}\label{first1}
Let $X\subseteq \R^d$ be a closed gentle set and $A\subseteq \R^d$ bounded measurable. Let $I$ and $J$ be non-empty finite index sets. For $i\in I$ and $j\in J$, let $ \beta_i ,\omega_j \in [0, 1)$, $B_i,W_j\subseteq \R^d$ be non-empty compact, and $\rho_i,\rho_j$ be almost everywhere bounded PSF's with compact support. Then
\begin{align*}
\MoveEqLeft \lim_{a\to 0} a^{-1} \int_{\xi^{-1}_{\partial X}(A)} \prod_{i\in I}\mathds{1}_{\big\{\theta_a^{X,\rho_i}(x+aB_i) \subseteq (\beta_i, 1] \big\}} \prod_{j\in J} \mathds{1}_{\big\{\theta_a^{X,\rho_j}(x+aW_j) \subseteq [0,\omega_j]\big\} }dx \\
&=\int_{\partial X\cap A} (\min_{i\in I}\{\varphi^{\rho_i}(\beta_i,n)-h({B}_i,n)\} - \max_{j\in J}\{\varphi^{\rho_j}(\omega_j,n)+h(\check{W}_j,n)\})^+d\Ha^{d-1}.
\end{align*}
\end{theorem}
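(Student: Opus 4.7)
The strategy is to follow the template of \cite[Theorem 2]{rataj} in three stages: (i)~localize the integrand to an $O(a)$-tube around $\partial X$, (ii)~parameterize that tube through the normal bundle to reduce the integral to an iterated integral over $\partial X \cap A$ against a transversal line, and (iii)~identify the transversal integrand via the tangent half-space at a.e.\ $x_0 \in \partial X$. The two new ingredients beyond \cite{rataj} are the multi-factor structure in $i\in I,\ j\in J$ (harmless after taking $\min$ and $\max$) and the more general PSF class, which requires extra care in step~(iii).

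For localization, choose $R>0$ with $B_i\cup W_j\cup \supp\rho_i\cup \supp\rho_j \subseteq B(0,R)$ for all $i,j$. Then $\theta_a^{X,\rho_i}(x+ay)$ is determined by $X\cap B(x,2aR)$, so when $\mathrm{dist}(x,\partial X) > 2aR$ each $\theta_a$ is $0$ or $1$. Because $\beta_i\in[0,1)$ excludes $\theta_a=0$ from the ``black'' condition and $\omega_j\in[0,1)$ excludes $\theta_a=1$ from the ``white'' condition, and because both $I$ and $J$ are non-empty, the integrand vanishes off the $2aR$-tube. Gentleness~(i) then allows the Weyl-type normal-bundle parameterization of this tube: for $\Ha^{d-1}$-a.e.\ $x_0\in\partial X$ there is a unique $n(x_0)\in S^{d-1}$ with positive reach on both sides, and substituting $s=t/a$ recasts the integral as
\begin{equation*}
\int_{\partial X\cap A}\int_{-2R}^{2R}\prod_i \mathds{1}_{\{\theta_a^{X,\rho_i}(x_0+asn+aB_i)\subseteq(\beta_i,1]\}}\prod_j \mathds{1}_{\{\theta_a^{X,\rho_j}(x_0+asn+aW_j)\subseteq[0,\omega_j]\}}\,(1+o(1))\,ds\,d\Ha^{d-1},
\end{equation*}
with the Jacobian factor $(1+o(1))$ controlled uniformly by the reach.

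For the pointwise analysis, condition~(ii) of gentleness furnishes inscribed balls on both sides and hence $a^{-1}(X-x_0)\to H_{n(x_0)}$ in $L^1_{\mathrm{loc}}$ at $\Ha^{d-1}$-a.e.\ $x_0$. Essentially bounded, compactly supported $\rho_i$ make $\theta$ continuous under $L^1$-perturbations of the underlying set, giving
\begin{equation*}
\theta_a^{X,\rho_i}(x_0+asn+ay)\ \longrightarrow\ \theta^{H_n,\rho_i}((s+\langle y,n\rangle)n),
\end{equation*}
a function monotone and continuous in $s+\langle y,n\rangle$. Monotonicity identifies the critical $y\in B_i$ as the one with $\langle y,n\rangle=h(B_i,n)$, so the $i$-th condition passes to $s<\varphi^{\rho_i}(\beta_i,n)-h(B_i,n)$ in the limit; symmetrically, the $j$-th condition becomes $s\geq \varphi^{\rho_j}(\omega_j,n)+h(\check{W}_j,n)$. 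The product of indicators therefore converges, for $\Ha^1$-a.e.\ $s$, to the indicator of the interval $[\max_j(\varphi^{\rho_j}(\omega_j,n)+h(\check{W}_j,n)),\,\min_i(\varphi^{\rho_i}(\beta_i,n)-h(B_i,n)))$, whose length is precisely $(\min_i-\max_j)^+$. Dominated convergence on the bounded $s$-interval and Fubini then yield the stated formula.

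The main technical obstacle is the pointwise-convergence step~(iii): we need the set of ``bad'' $s$-values at which the indicator might oscillate to be $\Ha^1$-null. This is immediate when $t\mapsto\theta^{H_n,\rho_i}(tn)$ is strictly decreasing, but a general bounded PSF can have plateaus at the level $\beta_i$, producing an interval of ambiguous $s$-values. The essential boundedness of $\rho_i$ makes this map Lipschitz, and a quantitative refinement of the half-space approximation must be used to estimate the $s$-measure of the bad set by $o(1)$ as $a\to 0$. This is the point where the generalization beyond indicator-type PSF's $\rho_B$ genuinely demands more work than in \cite{rataj}; everything else is essentially a careful rewriting of the Kiderlen--Rataj argument with $I$- and $J$-indexed products in place of single factors.
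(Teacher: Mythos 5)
Your overall architecture (localize to an $O(a)$-tube around $\partial X$, disintegrate over the boundary, identify the transversal limit via the tangent half-space) is the same as the paper's, and your localization step is correct. But there are two genuine gaps.

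First, the disintegration step. You invoke a ``Weyl-type normal-bundle parameterization \dots with the Jacobian factor $(1+o(1))$ controlled uniformly by the reach.'' Gentle sets need not have positive reach: the class contains all polyconvex sets, e.g.\ non-convex polytopes, for which $\text{exo}(X)$ accumulates at boundary points and no two-sided reach bound is available; at a convex corner the normal bundle carries a whole spherical cap of normals and the tube is not a product with Jacobian $1+o(1)$. The paper instead applies the local Steiner-type formula of Hug--Last--Weil for general closed sets, writing the integral as $\sum_{m=1}^d m\kappa_m\int_{N(\partial X)}\int_0^{\delta(\partial X;x,n)}t^{m-1}(\cdots)\,dt\,\mu_{d-m}(\partial X; d(x,n))$ with signed curvature measures, bounding the inner integral by $m^{-1}(2aD)^m$ so that after dividing by $a$ only the $m=1$ term survives, and identifying $\mu_{d-1}$ with the surface measure. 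Your sketch supplies no substitute for this, and condition (i) of gentleness alone does not yield it.

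Second, the pointwise step. You correctly observe that if $t\mapsto\theta^{H_n,\rho_i}(tn)$ has a plateau at level $\beta_i$, then pointwise convergence $\theta_a^X\to\theta^{H_n}$ does not force convergence of the indicators on the (possibly positive-length) set of transversal parameters where the limit equals $\beta_i$ exactly --- but you leave this unresolved (``a quantitative refinement \dots must be used''). That is precisely where the soft $L^1_{\mathrm{loc}}$ argument breaks down, and it is the heart of the proof. The paper closes it with a quantitative two-sided sandwich: gentleness (ii) gives $\theta_a^{B_{in}}\le\theta_a^X\le\theta_a^{\R^d\setminus B_{out}}$; the maps $t\mapsto\theta_a^{B_{in}}(t;s)$ and $t\mapsto\theta_a^{\R^d\setminus B_{out}}(t;s)$ are monotone on the relevant range, so each bounding transversal integral is exactly an interval length; and the elementary estimate $\theta_a^H(t+\nu;s)\le\theta_a^{B_{in}}(t;s)\le\theta_a^H(t;s)$ for $a\nu\ge r-\sqrt{r^2-(2aD)^2}$ shows the relevant crossing times differ from the half-space values by $O(a)$, plateaus or not, forcing the middle term to converge to the stated limit. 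You would need to reproduce an argument of this strength; as written the proof does not go through.
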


In the proof we shall use the following notation when a fixed $x\in \partial X$ is understood. 
We write $H:=H_{\langle x,n(x)\rangle,n(x)}^-$ and after possibly shrinking $B_{in}$ and $B_{out}$, we assume that they both have the common radius $r>0$.

For $t\in \R$ and $s\in \R^d$ we write
\begin{equation*}
\theta_a^X(t;s):=\theta^X_a(x + a(tn +s)).
\end{equation*}
The map $(t,s)\mapsto \theta_a^{X} (t;s)$ is continuous when $X$ is gentle since $\partial X \oplus B(\eps)\downarrow \partial X$ as $\eps \downarrow 0$ and $\Ha^{d}(\partial X)=0$ so that by monotone convergence,
\begin{align*}
|\theta_a^{X} (t';s') -\theta_a^{X} (t;s)| &=\bigg|\int_{ X-a(t'n+s')} \rho_a(z) dz-\int_{X-a(tn+s)} \rho_a(z) dz\bigg|\\
&\leq \bigg|\int_{\partial X \oplus B(a(|t-t'|+|s-s'|))} \rho_a(z-(x + a(tn +s))) dz\bigg|
\end{align*}
goes to 0 for $(t',s')\to (t,s)$.

For $x\in \partial X$ and $s\in S\subseteq \R^d$, let
\begin{align*}
t^X_+(a,\beta;s)&=\inf \{t\in (-\delta(\partial X, x,-n),\delta(\partial X,x,n)) \mid \theta_a^{X} (t;s) \leq \beta\}\\ 
t^X_-(a,\beta;s)&=\sup \{t \in (-\delta(\partial X, x,-n),\delta(\partial X,x,n)) \mid \theta_a^{X} (t;s) > \beta\}\\
t^X_+(a,\beta;S)&=\sup \{t^X_+(a,\beta;s) \mid {s\in S}\} \\
t^{X}_-(a,\beta;S)&=\inf \{t^X_-(a,\beta;s) \mid {s\in S}\}.
\end{align*}
When $t \mapsto \theta_a^{X} (t;s)$ is decreasing, we write
\begin{equation*}
t^X(a,\beta;s)=t^X_+(a,\beta;s)=t^X_-(a,\beta;s).
\end{equation*}

Since $\theta_a^H(t;s)$ is independent of $a$, we sometimes just write $\theta_0^H(t;s)$, $t^H(0,\beta;s)$, etc. Moreover, $\theta_0^H(t;s)=\theta_0^H(t+\langle s,n \rangle;0)$, so 
\begin{align*}
t^H_+(0,\beta;S)&=\varphi(\beta,n)+h(\check{S},n)\\
t^H_-(0,\beta;S)&=\varphi(\beta,n)-h(S,n).
\end{align*}

\begin{proof}[Proof of Theorem \ref{first1}]
For any gentle set $Y\subseteq \R^d$, let
\begin{align*}
f_a^Y(x)&= \prod_{j\in J} \mathds{1}_{\big\{\theta_a^{Y,\rho_j}(x+aW_j) \subseteq [0,\omega_j]\big\} }\\
g_a^Y(x)&= \prod_{i\in I}\mathds{1}_{ \big\{\theta_a^{Y,\rho_i}(x+aB_i) \subseteq (\beta_i, 1]\big\} }.
\end{align*}

Let $D$ be such that $B_i,W_j, \supp \rho_i ,\supp \rho_j  \subseteq B(D)$ for all $i,j$. This ensures that $\supp  f_a^Xg_a^X \subseteq \partial X \oplus aB(2D)$ and hence by  \cite[Theorem 2.1]{last} that
\begin{align}\label{lastform}
\MoveEqLeft \int_{\xi^{-1}_{\partial X}(A)}f_a^X(x)g_a^X(x)dx = \sum_{m=1}^d m\kappa_m \int_{N(\partial X)}\mathds{1}_A(x)\\&
\times \int_0^{\delta(\partial X;x,n)} t^{m-1} \nonumber
 f_a^X(x+tn)g_a^X(x+tn)dt \mu_{d-m}(\partial X;d(x,n))
\end{align}
where $\kappa_m$ is the volume of the unit ball in $\R^m$ and $\mu_{m}(\partial X,\cdot)$ are certain signed measures of locally finite total variation.

First observe that
\begin{align*}
\int_0^{\delta(\partial X;x,n)} t^{m-1} f_a^X(x+tn)g_a^X(x+tn)dt &\leq \int_0^{ 2aD} t^{m-1} f_a^X(x+tn)g_a^X(x+tn)dt\\
& \leq m^{-1}a^m(2D)^m. 
\end{align*}
Since each $\mu_{d-k}$ has locally finite total variation and $A$ is bounded, Lebesgue's theorem of dominated convergence and the identification of $\mu_{d-1}$ given in \cite[Equation~(8)]{rataj} yields
\begin{align*}
\MoveEqLeft \lim_{a\to 0}a^{-1}\sum_{m=1}^d m\kappa_m \int_{N(\partial X)}\mathds{1}_A(x) \\ &\times\int_0^{2aD} t^{m-1} f_a^X(x+tn)g_a^X(x+tn)dt \mu_{d-m}(\partial X;d(x,n)) \\
 ={}&  \lim_{a\to 0} a^{-1}\kappa_1 \int_{N(\partial X)} \mathds{1}_A(x) \int_0^{2aD} f_a^X(x+tn)g_a^X(x+tn)dt \mu_{d-1}(\partial X;d(x,n))\\
 ={}& \int_{\partial X\cap A}\bigg( \lim_{a\to 0}\int_{-2D}^{2D} f_a^X(x+atn)g_a^X(x+atn)dt\bigg) \Ha^{d-1}(dx)
\end{align*}
if the limit exists. Thus we consider the inner integral for $x\in \partial X $ fixed.

Observe that
\begin{equation*}
\theta_a^{B_{in}}(t;s) \leq \theta_a^X(t;s), \theta_a^{H}(t;s) \leq \theta_a^{\R^d \backslash B_{out}}(t;s).
\end{equation*}
Thus 
\begin{equation}
\begin{split}\label{ineq2}
f_a^{B_{in}}(x+atn)\geq f_a^X(x+atn)\geq f_a^{\R^d\backslash B_{out}}(x+atn)\\ 
g_a^{B_{in}}(x+atn)\leq g_a^X(x+atn)\leq g_a^{\R^d\backslash B_{out}}(x+atn)
\end{split}
\end{equation}
and hence
\begin{align}
\label{ineq}
\int_{-2D}^{2D}f_a^{\R^d\backslash B_{out}}(x+atn) g_a^{B_{in}}(x+atn)dt &\leq\int_{-2D}^{2D} f_a^X(x+atn) g_a^X(x+atn)dt\\ \nonumber
&\leq \int_{-2D}^{2D}f_a^{B_{in}}(x+atn)g_a^{\R^d\backslash B_{out}}(x+atn)dt.
\end{align}

If $Y_i$, $i=1,2$, are gentle sets for which $t\mapsto \theta_a^{Y_i}(t;s)$ are decreasing on $(-2D,2D)$ for all $s$ with $|s|\leq D$, then
\begin{align}\nonumber
\MoveEqLeft \int_{-2D }^{2D} f_a^{Y_1}(x+atn)g_a^{Y_2}(x+atn) dt\\\label{mint}
& = \int_{\max_{j\in J} \{t^{Y_1}_+(a,\omega_j;W_j) \}}^{\min_{i\in I}\{ t_-^{Y_2}(a,\beta_i;B_i)\}}  \mathds{1}_{\{\min_{i\in I} \{t^{Y_2}_-(a,\beta_i;B_i)\} >\max_{j\in J} \{t^{Y_1}_+(a,\omega_j;W_j)\}}dt\\
&= (\min_{i\in I}\{ t^{Y_2}_-(a,\beta_i;B_i) -\max_{j\in J} \{t^{Y_1}_+(a,\omega_j;W_j)\})^+.\nonumber
\end{align} 

From now on, we assume $2aD\leq r$. This guarantees that 
\begin{equation*}
x+a(B(2D)\oplus [-2Dn,2Dn]) \subseteq \conv(B_{in}\cup B_{out})
\end{equation*}
and thus \eqref{mint} holds for $Y_i$ equal to  $ {B_{in}}$, $H$, or $\R^d\backslash B_{out}$. Here $[x,y]$ denotes the line segment between $x$ and $y$.
Moreover,
\begin{align*}
(H \cap (x+a(tn +s + B(D)))-a\nu n) & \subseteq B_{in} \cap (x+a(tn +s+ B(D)))\\
\R^d \backslash B_{out} \cap (x+a(tn +s + B(D))) & \subseteq (H\cap (x+a(tn +s + B(D)))+a\nu n )
\end{align*}
whenever $a\nu \geq r-\sqrt{r^2-a^2(2D)^2}$.
It follows that
\begin{gather*}
\theta_a^H(t+\nu;s) \leq \theta_a^{B_{in} }(t;s ) \leq \theta_a^H(t;s )\\
\theta_a^H(t;s )\leq \theta_a^{\R^d\backslash B_{out}}(t;s ) \leq \theta_a^H(t-\nu;s)
\end{gather*}
for such $\nu$. Thus 
\begin{align}\nonumber
|t^H(a,\beta;s)-t^{B_{in}}(a,\beta;s)|,|t^H(a,\beta;s)-t^{\R^d \backslash B_{out}}(a,\beta;s)|&\leq a^{-1}(r-\sqrt{r^2-(2aD)^2})\\ \label{unift}
&\leq Ma
\end{align}
for some $M>0$ depending only on $r$ and $D$. Therefore,
\begin{equation*}
\bigg\lvert \int_{-2D }^{2D} f_a^{B_{in}}(x+atn)g_a^{\R^d\backslash B_{out}}(x+atn) dt-\int_{-2D }^{2D} f_a^{H}(x+atn)g_a^{H}(x+atn) dt \bigg\rvert \in O(a).
\end{equation*}

But for all $a$,
\begin{align*}
\MoveEqLeft \int_{-2D}^{2D} f_a^{H}(x+atn)g_a^{H}(x+atn) dt  = (\min_{i\in I} t_-^{H}(0,\beta_i;B_i) -\max_{j\in J} t^{H}_+(0,\omega_j;W_j))^+\\
&= (\min_{i\in I}\{\varphi^{\rho_i}(\beta_i,n)-h({B}_i,n)\} - \max_{j\in J}\{\varphi^{\rho_j}(\omega_j,n)+h(\check{W}_j,n)\})^+.
\end{align*}

Thus, the right hand side of \eqref{ineq} is forced to converge with this limit. A similar argument applied to the left hand side finally forces the middle term to converge with this limit as well, proving the theorem.
\end{proof}

\begin{remark}\label{tilderem}
If the one or more of the intervals $[0,\beta_i]$ or $(\omega_j,1]$ are replaced by $[0,\beta_i)$ or $[\omega_j,1]$, respectively, with $ \beta_i,\omega_j \in (0,1]$, the theorem clearly holds with the corresponding $\varphi$ replaced by $\tilde{\varphi}$ by a similar argument, as long as the intersection of all the intervals does not contain either of  0 or 1.
\end{remark}

\subsection{Generalization to PSF's with non-compact support}
The proof of Theorem \ref{first1} generalizes to the case where $\supp \rho$ is non-compact and satisfies:
\begin{property}\label{rhocond}
There is a $C>0$ such that $\rho\leq C$ almost everywhere and the function $t \mapsto -\int_{\partial H_{n}}\rho(z-tn)dz$ is continuous  for every $n\in S^{d-1}$.
\end{property}
The condition is satisfied for most PSF's occurring in practice, e.g.\ if $\rho $ is bounded and $\rho(z)\in O(|z|^k)$ for some $k<-d$.
Property \ref{rhocond} clearly ensures:

\begin{lemma}\label{lemH}
Let $\rho$ have Property \ref{rhocond}. Then $t \mapsto \theta^{H_{n}}(tn)$ is decreasing $C^1$ with 
\begin{equation*}
\frac{d}{dt}\theta^{H_n}(tn) = -\int_{\partial H_{n}}\rho(z-tn)dz.
\end{equation*}
\end{lemma}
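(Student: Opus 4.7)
The plan is to reduce $\theta^{H_n}(tn)$ to a one-dimensional integral along the direction $n$ via Fubini, and then invoke the fundamental theorem of calculus for continuous integrands. Since $|n|=1$, every $z \in H_n$ can be uniquely written as $z = sn + w$ with $s\le 0$ and $w \in n^{\perp}$. Because $\rho \in L^1(\R^d)$, Fubini's theorem gives
\[
\theta^{H_n}(tn) \;=\; \int_{H_n} \rho(z-tn)\,dz \;=\; \int_{-\infty}^{0}\int_{n^\perp}\rho\bigl((s-t)n + w\bigr)\,\Ha^{d-1}(dw)\,ds,
\]
and the substitution $u = s-t$ turns this into
\[
\theta^{H_n}(tn) \;=\; \int_{-\infty}^{-t} g(u)\,du, \qquad g(u) \;:=\; \int_{n^\perp} \rho(un + w)\,\Ha^{d-1}(dw).
\]

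Next I would identify $g$ with the hyperplane integral that appears in the statement. For $y \in \partial H_n = n^\perp$ and $u \in \R$, writing $t := -u$ gives $y - tn = y + un$, so
\[
g(u) \;=\; \int_{\partial H_n} \rho\bigl(z - (-u)n\bigr)\,\Ha^{d-1}(dz).
\]
Hence Property \ref{rhocond} (applied with $t = -u$) is exactly the statement that $u \mapsto g(u)$ is continuous on $\R$. Combined with $g \ge 0$ and $\int_{\R} g(u)\,du = \int_{\R^d}\rho\,d\Ha^d = 1 < \infty$, the fundamental theorem of calculus for continuous integrands yields that $t \mapsto \int_{-\infty}^{-t} g(u)\,du$ is of class $C^1$, with derivative
\[
\frac{d}{dt}\theta^{H_n}(tn) \;=\; -g(-t) \;=\; -\int_{\partial H_n}\rho(z - tn)\,\Ha^{d-1}(dz),
\]
and the monotonicity statement follows from $g \ge 0$.

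There is no real obstacle in this argument; the single point requiring care is the legitimacy of the Fubini step, which is immediate from $\rho \in L^1(\R^d)$, and the fact that the derivative formula and continuity of the derivative rest squarely on Property \ref{rhocond}, which has been imposed precisely to supply them.
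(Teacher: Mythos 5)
Your proof is correct, and it supplies exactly the standard Fubini--plus--fundamental-theorem-of-calculus argument that the paper treats as immediate (the lemma is stated with no proof, prefaced only by ``Property \ref{rhocond} clearly ensures''). The decomposition $z = sn + w$, the identification of the slice integral $g(-t)=\int_{\partial H_n}\rho(z-tn)\,\Ha^{d-1}(dz)$ whose continuity is precisely what Property \ref{rhocond} postulates, and the sign argument for monotonicity are all as intended.
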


We first need some technical lemmas. Let 
\begin{equation*}
\mu(R)= \int_{|z|\geq R} \rho(z) dz .
\end{equation*}
By integrability of $\rho$, $\lim_{R\to \infty}\mu(R)=0$.

\begin{lemma}\label{lemr}
Let $x\in \partial X$ and $a>0$ be fixed. Let  $K>0$ and $0 < R < r-2aK$, and suppose $\rho $ is a PSF with $\rho \leq C$ almost surely for some $C >0$. Then there is a constant $M>0$ depending only on $r$, $C$, and $K$ such that for all $t\in \R$ and $s\in \R^d$ with $|s|,|t|\leq K$,
\begin{gather*}
0\leq \theta^{H}_a(t;s)-\theta^{B_{in}}_a(t;s)\leq Ma^{-d}(R+a|s|)^{d+1}+ \mu(a^{-1}R)\\
0 \leq \theta^{\R^d\backslash B_{out}}_a (t;s) - \theta^{H}_a(t;s) \leq Ma^{-d}(R+a|s|)^{d+1}+ \mu(a^{-1}R).
\end{gather*}
\end{lemma}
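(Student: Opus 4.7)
The plan is to start from
\begin{equation*}
0 \leq \theta_a^H(t;s) - \theta_a^{B_{in}}(t;s) = \int_{H \setminus B_{in}} \rho_a\bigl(z - (x + a(tn + s))\bigr)\,dz,
\end{equation*}
which is nonnegative because $B_{in} \subseteq H$, and to split the domain at distance $R$ from the centre $x + a(tn + s)$. The far part, where $|z - (x + a(tn + s))| \geq R$, contributes at most $\int_{|w| \geq a^{-1}R} \rho(w)\,dw = \mu(a^{-1}R)$ after the substitution $w = a^{-1}(z - (x + a(tn + s)))$. Only the near part, obtained by intersecting $H \setminus B_{in}$ with the ball $B(x + a(tn + s), R)$, requires real work.

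For the near part, I would place $x$ at the origin and take $n(x) = e_d$, so that $H = \{z_d \leq 0\}$ and $B_{in} = B(-re_d, r)$. Writing $z = (z', z_d)$ and $h = -z_d \geq 0$, a point $z \in H$ lies outside $B_{in}$ iff $h(2r - h) < |z'|^2$. If moreover $z \in B(x + a(tn + s), R)$, then $|z'| \leq R + a|s|$ and $h \leq R + 2aK < r$, where I used $|t|,|s| \leq K$ and the hypothesis $R < r - 2aK$. Since $h < r$ gives $2r - h \geq r$, the defining inequality becomes $hr \leq h(2r-h) < |z'|^2 \leq (R+a|s|)^2$, hence $h < (R + a|s|)^2/r$. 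Consequently $(H \setminus B_{in}) \cap B(x + a(tn + s), R)$ is contained in a cylinder of horizontal radius $R + a|s|$ and height $(R + a|s|)^2/r$, whose $\Ha^d$-measure is at most $\kappa_{d-1}(R + a|s|)^{d+1}/r$. Combining with the almost-everywhere bound $\rho_a \leq C a^{-d}$ yields a near-part contribution of at most $C \kappa_{d-1} r^{-1} a^{-d}(R + a|s|)^{d+1}$, and adding the two parts gives the first inequality with $M = C\kappa_{d-1}/r$.

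The second bound is symmetric: $B_{out}$ is tangent to $\partial H$ from the opposite side, so reflecting across $\partial H$ identifies the crescent $(\R^d \setminus B_{out}) \setminus H$ with $H \setminus B_{in}$, and the identical parabolic estimate applies inside $B(x + a(tn + s), R)$. I do not expect any real obstacle; the only subtle point is the quadratic approximation of the sphere near its tangent hyperplane, which produces the favourable exponent $d + 1$ rather than $d$ and is the entire raison d'\^etre of the lemma. The careful bookkeeping of constants, in particular tracking that $M$ depends only on $r$, $C$, and $K$, is transparent from the above computation.
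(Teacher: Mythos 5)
Your proposal is correct and follows essentially the same route as the paper: both split the symmetric difference $H\setminus B_{in}$ (resp.\ $(\R^d\setminus B_{out})\setminus H$) into the tail outside $B(x+a(tn+s),R)$, controlled by $\mu(a^{-1}R)$, and a thin crescent near the tangent hyperplane whose volume is $O((R+a|s|)^{d+1})$ by the quadratic approximation of the sphere; the paper writes this crescent volume as $\int_{B^{d-1}(R+a|s|)}(r-\sqrt{r^2-|z|^2})\,dz$, which is exactly your cylinder bound via $r-\sqrt{r^2-|z|^2}\leq |z|^2/r$. No gaps.
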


\begin{proof}
Observe that $R$ is chosen so small that 
\begin{equation*}
x+a(tn+s)+B(R)\subseteq \conv(B_{in}\cup B_{out})
\end{equation*}
 whenever $|s|,|t|\leq K $. Since
\begin{equation*}
B_{in }\subseteq H \subseteq B_{in} \cup A \cup  \R^d \backslash (x+a(tn+s)+B(R))
\end{equation*}
where 
\begin{equation*}
A=(H \backslash B_{in})\cap (x+a(tn+s)+B(R)),
\end{equation*}
this ensures that
\begin{align*}
 \theta_a^{H}(t;s)
\leq {}& \int_{B_{in}} \rho_a(z-(x+a(tn +s)) dz 
+a^{-d} \int_{A} \rho(a^{-1}(z-(x+a(tn +s) ))) dz \\
&+ a^{-d}\int_{\R^d \backslash (x+a(tn+s)+B(R))} \rho(a^{-1}(z-(x+a(tn +s))))dz \\
\leq {}&\theta^{B_{in}}_a(t;s)) + a^{-d} C \int_{B^{d-1}(R+a|s|)} (r-\sqrt{r^2-|z|^2})dz+ \mu(a^{-1}R)\\
\leq {}&\theta^{B_{in}}_a(t;s) + Ma^{-d}(R+a|s|)^{d+1}+ \mu({a^{-1}R})
\end{align*}
where $B^{d-1}(D)$ denotes the ball in $\R^{d-1}$ of radius $D$.

The second inequality is similar, using
\begin{equation*}
\R^d\backslash B_{out} \subseteq H  \cup  ( \R^d \backslash (x+a(tn+s)+B(R)))\cup B 
\end{equation*}
with $B=(\R^d \backslash (B_{out}\cup H) )\cap  (x+a(tn+s)+B(R))$.
\end{proof}

When $n\in S^{d-1}$ and $s\in \R^d$ are given, we shall say that $\beta \in (0,1)$ is a regular value for $\theta^{H_n}_0(\cdot;s)$ if $t\mapsto \theta^{H_n}_0(t;s)=\theta^{H_n}_0(t+\langle s,n\rangle;0) $ has non-zero derivative on the set 
$\{t\in \R \mid  \theta^{H_n}_0(t;s)=\beta\}$.

\begin{lemma}\label{tsqueeze}
Let $x\in \partial X$ be fixed. Let $\rho$ be a PSF satisfying Property \ref{rhocond} and $B \subseteq \R^d$ a compact set. Suppose $\beta \in (0,1)$ is a regular value for $\theta^H_0(\cdot ;s)$ for all $s\in B$ and let $K>0$ be given. Then there is a function $\gamma(a)\in o(1)$ such that for all $s\in B$,
\begin{align*}
& \theta^{\R^d\backslash B_{out}}_a {}(t;s) < \beta & &\text{ for all } && t\in ( t^{H}(0,\beta;s)+\gamma(a),K ]\\
&\theta^{B_{in}}_a {}(t;s)>\beta & &\text{ for all } && t \in [-K, t^{H}(0,\beta;s)-\gamma(a))
\end{align*}
whenever $a$ is sufficiently small.
In particular, for $a$ small enough 
\begin{equation*}
|t^{B_{in}}_\pm(a,\beta;s)-t^{H}(0,\beta;s)|,|t^{\R^d\backslash B_{out}}_\pm(a,\beta;s)-t^{H}(0,\beta;s)| \leq \gamma(a)
\end{equation*}
whenever $s\in B$.
\end{lemma}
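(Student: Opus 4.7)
The plan is to sandwich $\theta^{B_{in}}_a$ and $\theta^{\R^d\setminus B_{out}}_a$ around $\theta^H_a$ via Lemma \ref{lemr} with a suitably chosen resolution-dependent cut-off $R=R(a)$, and then to invert this approximation using the regular-value hypothesis together with the $C^1$-regularity of $\psi(u):=\theta^{H_n}(un)$ supplied by Lemma \ref{lemH}. The first thing to notice is that the halfspace integral is independent of $a$: $\theta^H_a(t;s)=\psi(t+\langle s,n\rangle)$. Consequently the regular-value hypothesis reduces to the single condition $\psi'(\psi^{-1}(\beta))\neq 0$, which does not depend on $s$. Since $\psi$ is decreasing and $\psi'$ is continuous, there exist $c_0>0$ and $\delta_0>0$ with $\psi'\leq -c_0$ on $(\psi^{-1}(\beta)-\delta_0,\psi^{-1}(\beta)+\delta_0)$. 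The mean value theorem then gives $\psi(\psi^{-1}(\beta)+\eta)\leq \beta-c_0\eta$ and $\psi(\psi^{-1}(\beta)-\eta)\geq \beta+c_0\eta$ for $\eta\in[0,\delta_0)$.

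Next, fix any $\alpha\in(d/(d+1),1)$, set $R(a):=a^\alpha$, and put $K':=\max(K,\sup_{s\in B}|s|)<\infty$ by compactness of $B$. The choice $\alpha>d/(d+1)$ gives $a^{-d}(R(a)+aK')^{d+1}\to 0$, while $\alpha<1$ forces $a^{-1}R(a)\to\infty$ and hence $\mu(a^{-1}R(a))\to 0$. With
\begin{equation*}
\eps(a):=Ma^{-d}(R(a)+aK')^{d+1}+\mu(a^{-1}R(a))
\end{equation*}
and $M=M(r,C,K')$ from Lemma \ref{lemr}, one has $\eps(a)=o(1)$ and
\begin{equation*}
|\theta^{B_{in}}_a(t;s)-\psi(t+\langle s,n\rangle)|\leq \eps(a),\qquad |\theta^{\R^d\setminus B_{out}}_a(t;s)-\psi(t+\langle s,n\rangle)|\leq \eps(a),
\end{equation*}
uniformly in $s\in B$ and $|t|\leq K$.

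Finally, set $\gamma(a):=2\eps(a)/c_0=o(1)$, so eventually $\gamma(a)<\delta_0$. For $t\in(t^H(0,\beta;s)+\gamma(a),K]$, monotonicity of $\psi$ combined with the sandwich yields
\begin{equation*}
\theta^{\R^d\setminus B_{out}}_a(t;s)\leq \psi(t+\langle s,n\rangle)+\eps(a)\leq \psi(\psi^{-1}(\beta)+\gamma(a))+\eps(a)\leq \beta-c_0\gamma(a)+\eps(a)=\beta-\eps(a)<\beta,
\end{equation*}
and symmetrically $\theta^{B_{in}}_a(t;s)>\beta$ on $[-K,t^H(0,\beta;s)-\gamma(a))$. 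The bounds on $t^{B_{in}}_\pm$ and $t^{\R^d\setminus B_{out}}_\pm$ are then immediate from their infimum/supremum definitions. The main obstacle I expect is the joint calibration of $R(a)$: Lemma \ref{lemr} requires $R$ small compared to the macroscopic radius $r$ yet large compared to the pixel scale $a$, and ensuring uniformity of the error in $s\in B$ relies crucially on compactness of $B$.
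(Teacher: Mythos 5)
Your proof of the two displayed inequalities is correct and follows essentially the same route as the paper: the same cut-off $R(a)=a^{\alpha}$ with $\alpha\in(d/(d+1),1)$ fed into Lemma \ref{lemr}, the same uniform negative lower bound on the derivative of $t\mapsto\theta^H_0(t;s)$ near the level set coming from the regular-value hypothesis, and essentially the same $\gamma(a)$. Your observation that $\theta^H_0(t;s)=\psi(t+\langle s,n\rangle)$ makes the regular-value condition and the constant $c_0$ independent of $s$ is a clean substitute for the compactness-in-$s$ argument the paper uses to get the uniform constant $M_1$.

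The gap is in your final sentence. The quantities $t^{B_{in}}_\pm(a,\beta;s)$ and $t^{\R^d\backslash B_{out}}_\pm(a,\beta;s)$ are infima/suprema over $t$ ranging over the interval determined by the reach, which in the scaled variable is of order $a^{-1}r$ and is not contained in $[-K,K]$ for any fixed $K$. Your first part controls $\theta^{B_{in}}_a(t;s)$ and $\theta^{\R^d\backslash B_{out}}_a(t;s)$ only for $|t|\leq K$, so it does not exclude that, say, $\theta^{B_{in}}_a(t;s)\leq\beta$ at some $t<-K$, which would drag $t^{B_{in}}_+(a,\beta;s)$ far below $t^{H}(0,\beta;s)-\gamma(a)$. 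Since $\rho$ may have non-compact support, $\theta^{B_{in}}_a(\cdot;s)$ need not be monotone, and at depth $|t|$ inside $B_{in}$ one only has the a priori bound $\theta^{B_{in}}_a(t;s)\geq 1-\mu(|t|-|s|)$, which is useless unless $|t|$ is large compared with the tail of $\rho$. The paper closes this by first choosing $D$ with $B\subseteq B(D)$ and $\mu(D)<\min\{\beta,1-\beta\}$ (this is where $\beta\in(0,1)$ strictly is needed), observing that for small $a$ and all $t\in[-a^{-1}r,-2D]$ one has $x+a(tn+s+B(D))\subseteq B_{in}$ and hence $\theta^{B_{in}}_a(t;s)\geq 1-\mu(D)>\beta$, and only then invoking the first part with $K$ replaced by $2D$; the argument for $\R^d\backslash B_{out}$ is symmetric. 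You need to insert this step (or an equivalent control of the far range of $t$) before the conclusion can be called immediate.
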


\begin{proof} 
Since $g(t,s) = \frac{d}{dt} \theta^{H}_0(t; s )$ is continuous and $g(t^{H}(0,\beta;s),s)<0 $ for all $s\in B$ by assumption, there is a $\delta>0$ such that
\begin{equation*}
M_1=\inf \{-g(t^{H}(0,\beta;s)+\xi,s) \mid s\in B, |\xi|\leq \delta \}>0 .
\end{equation*} 
Let $s \in B$, write $t_0=t^{H}(0,\beta;s)$, and let $|\nu|\leq \delta$. By the mean value theorem there is a $|\xi| \leq \nu$ such that
\begin{align*}
\theta^{H}_0 {}&(t_0;s) -\theta^{H}_0(t_0+\nu;s) = -\nu g(t_0+\xi,s) \geq M_1 \nu.
\end{align*}

Put $R(a)=a^{\eps}$ where $\frac{d}{d+1} < \eps < 1$.  Lemma \ref{lemr} shows that
\begin{equation*}
0\leq \theta_a^{\R^d\backslash B_{out}}(t;s)-\theta_0^{H}(t;s) \leq M_2 a^{-d}(R(a)+a|s|)^{d+1}+ \mu(a^{-1}R(a))
\end{equation*}
for all $t\in [-K,K]$.
Thus, whenever
\begin{equation*}
\delta\geq \nu > M_1^{-1}  (M_2 a^{-d}(R(a)+a|s|)^{d+1}+ \mu(a^{-1}R(a))), 
\end{equation*}
we get
\begin{equation*}
\beta >  M_2 a^{-d}(R(a)+a|s|)^{d+1}+ \mu(a^{-1}R(a)) + \theta^{H}_0(t_0+\nu;s) \geq \theta_a^{\R^d\backslash B_{out}}(t_0+\nu;s)
\end{equation*}
and since $\nu \mapsto \theta^{H}_0(t+\nu; s)$ is decreasing, this also holds when $\nu > \delta$ as long as $t_0+\nu \leq K$.

Thus we may take $\gamma(a)$ to be
\begin{equation*}
\gamma(a)=M_1^{-1} ( M_2 a^{-d}(R(a)+a\sup\{|s| , {s\in B}\})^{d+1}+ \mu(a^{-1}R(a))).
\end{equation*}
Then $\gamma(a) \in  o(1)$ since $\eps(d+1)-d>0$ and $\lim_{a\to 0} \mu(a^{\eps - 1})=0$.
The claim about $\theta_a^{ B_{in}}$ is similar.

For the last claim, choose $D>0$ such that $\mu(D) < \beta, 1-\beta $ and $B\subseteq B(D)$. Then for $a$ small enough,
\begin{equation*}
x+a(tn+s+B(D))\subseteq B_{in} \subseteq H
\end{equation*}
for all $t\in[-a^{-1}r,-2D]$, so for such $t$, 
\begin{equation*}
\theta_0^{H}(t;s)\geq \theta_a^{B_{in}}(t;s)\geq 1-\mu(D) > \beta.
\end{equation*}
The claim for $t^{B_{in}}_\pm(a,\beta;s)$ now follows from the first part with $K$ replaced by $2D$. The claim about $t^{\R^d\backslash B_{out}}_\pm(a,\beta;s)$ is similar.
\end{proof}

We can now state the main theorem for PSF's with non-compact support:
\begin{theorem}\label{first2}
Theorem \ref{first1} also holds for PSF's $\rho_i,\rho_j$ having non-compact support and satisfying Property \ref{rhocond} if  $\beta_i,\omega_j \in (0,1)$ are regular values for $\theta^{H_n,\rho_i}_0(\cdot ;b)$ and $\theta^{H_n,\rho_j}_0(\cdot ;w)$, respectively, for all $n\in S^{d-1}$, $b\in B_i$, and $w\in W_j$.
\end{theorem}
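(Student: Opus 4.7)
The plan is to follow the proof of Theorem \ref{first1} step by step, adapting only the two places where the compact support of $\rho_i,\rho_j$ was used. Formula \eqref{lastform} and the identification of $\mu_{d-1}$ remain unchanged, so I only need to re-establish (i) the effective support of $f_a^Xg_a^X$ in the $t$-variable, used to bound the $m\geq 2$ contributions, and (ii) the uniform comparison of threshold times among $B_{in}$, $\R^d\backslash B_{out}$, and $H$ required for the sandwich argument.

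For (i), I would fix $D>0$ large enough that $B_i,W_j\subseteq B(D)$ for all $i,j$, with $\mu_i(D)<\beta_i$ for every $i\in I$ and $\mu_j(D)<1-\omega_j$ for every $j\in J$. The geometric estimate carried out inside the proof of Lemma \ref{tsqueeze} shows that, once $2aD\leq r$ and $|w|\leq D$, the ball $B(x+tn+aw,aD)$ lies inside $B_{in}$ whenever $t\leq -2aD$ and inside $B_{out}$ whenever $t\geq 2aD$. Combined with the sandwich $\theta_a^{B_{in}}\leq\theta_a^X\leq\theta_a^{\R^d\backslash B_{out}}$, this forces $\theta_a^{X,\rho_j}(x+tn+aw)\geq 1-\mu_j(D)>\omega_j$ for $t\leq -2aD$ and $\theta_a^{X,\rho_i}(x+tn+ab)\leq \mu_i(D)<\beta_i$ for $t\geq 2aD$, so $f_a^X(x+tn)=0$ on the first range and $g_a^X(x+tn)=0$ on the second. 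Consequently the bound $\int_0^{\delta(\partial X;x,n)}t^{m-1}f_a^Xg_a^X(x+tn)\,dt\leq m^{-1}(2aD)^m$ from the proof of Theorem \ref{first1} still applies, and the $m\geq 2$ summands in \eqref{lastform} contribute $O(a^{m-1})=o(1)$ after multiplication by $a^{-1}$.

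For (ii), the $O(a)$ bound \eqref{unift} is replaced by the $o(1)$ bound $\gamma(a)$ of Lemma \ref{tsqueeze}, which is uniform in $s$ over each compact set $B_i$ or $W_j$ precisely because $\beta_i,\omega_j$ are regular values. Applying \eqref{mint} with $Y_1=Y_2=H$, whose strict monotonicity in $t$ is furnished by Lemma \ref{lemH}, gives $\int_{-2D}^{2D}f_a^Hg_a^H(x+atn)\,dt=(\min_{i\in I}\{\varphi^{\rho_i}(\beta_i,n)-h(B_i,n)\}-\max_{j\in J}\{\varphi^{\rho_j}(\omega_j,n)+h(\check{W}_j,n)\})^+$, a quantity independent of $a$. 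Using the sandwich \eqref{ineq2} together with the uniform threshold convergence from Lemma \ref{tsqueeze}, both $\int f_a^{B_{in}}g_a^{\R^d\backslash B_{out}}(x+atn)\,dt$ and $\int f_a^{\R^d\backslash B_{out}}g_a^{B_{in}}(x+atn)\,dt$ tend to this same limit, pinning down $\int f_a^Xg_a^X(x+atn)\,dt$.

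The main obstacle is the slower convergence rate: Lemma \ref{tsqueeze} yields only $o(1)$ rather than the explicit $O(a)$ of \eqref{unift}, and this relies essentially on the regular-value hypothesis (without which $\theta_0^{H_n}(\cdot;s)$ could plateau at a threshold level, making $\gamma(a)$ fail to vanish). Since the limiting value $\int f_a^Hg_a^H(x+atn)\,dt$ is constant in $a$, however, $o(1)$ convergence is still sufficient to trap both sandwich bounds at the same value, completing the argument.
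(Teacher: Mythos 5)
Your proposal is correct and follows essentially the same route as the paper: reduce to the inner integral via the local Steiner formula with $D$ chosen so that $\mu(D)<\min\{\beta_i,1-\omega_j\}$, sandwich between $B_{in}$ and $\R^d\backslash B_{out}$, and replace the $O(a)$ bound \eqref{unift} by the uniform $o(1)$ bound $\gamma(a)$ from Lemma \ref{tsqueeze}, which suffices because the limiting $H$-integral is independent of $a$. The only point worth spelling out a little more (as the paper does via \eqref{ovre} and \eqref{nedre}) is that $t\mapsto\theta_a^{B_{in}}(t;s)$ and $t\mapsto\theta_a^{\R^d\backslash B_{out}}(t;s)$ need not be monotone, so \eqref{mint} cannot be applied to them directly; one instead bounds the sandwich integrals from above by the length of the interval between the extreme threshold times and from below by a shifted $H$-integral, both of which Lemma \ref{tsqueeze} controls.
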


\begin{proof}
The proof goes as in the case of compact support. We now choose $D$ such that $\mu(D) < \min\{\beta_i,1-\omega_j\mid i\in I, j\in J\}$ and all $B_i,W_j \subseteq B(D)$ to ensure that $\supp f^X_ag_a^X\subseteq \partial X \oplus aB(2D)$. 
The same argument then reduces the proof to a computation of the limit as $a\to \infty$ of
\begin{equation}\label{conv?}
 \int_{-2D}^{2D} f^X_a(x+atn) g_a^X (x+atn) dt
\end{equation}
for each $x\in \partial X$.

We still have the inequalities 
\begin{equation*}
f^{\R^d\backslash B_{out}}_a g^{B_{in}}_a \leq f^X_ag_a^X \leq f^{B_{in}}_ag^{\R^d\backslash B_{out}}_a.
\end{equation*}
However, $\theta^{B_{in}}_a$ and $\theta_a^{\R^d\backslash B_{out}}$ may not be injective. It is still true that
\begin{align}
\begin{split}\label{ovre}
\MoveEqLeft \int_{-2D}^{2D} f^{B_{in}}_a(x+atn) g^{\R^d\backslash B_{out}}_a (x+atn)dt \\
&\leq  \big(\min_{i\in I}\{t^{\R^d\backslash B_{out}}_-(a,\beta_i;B_i)\}-\max_{j\in J}\{t_+^{B_{in}}(a,\omega_j;W_j)\}\big)^+.
\end{split}
\end{align}
Moreover, Lemma \ref{tsqueeze} yields
\begin{equation*}
f^{H}_a(x+a(t-\gamma(a))n)g^{H}_a(x+a(t+\gamma(a))n) \leq f^{\R^d\backslash B_{out}}_a (x+atn) g^{B_{in}}_a(x+atn)
\end{equation*}
and hence
\begin{align}
\begin{split}\label{nedre}
\MoveEqLeft \big( \min_{i\in I}\{t^H_-(0,\beta_i;B_i)- \gamma(a)\}-\max_{j\in J}\{t^H_+(0,\omega_j;W_j)+\gamma(a)\}\big)^+\\ 
 &\leq \int_{-2D}^{2D} f^{\R^d\backslash B_{out}}_a (x+atn) g^{B_{in}}_a(x+atn)dt.
 \end{split}
\end{align}
Both the right hand side of \eqref{ovre} and the left hand side of \eqref{nedre} converge to
\begin{align*}
\big(\min_{i\in I}\{t^H_-(0,\beta_i;B_i) \}-\max_{j\in J}\{t^H_+(0,\omega_j;W_j)\}\big)^+
\end{align*}
by Lemma \ref{tsqueeze}. Thus \eqref{conv?} is forced to converge with the same limit.
\end{proof}

\begin{remark}\label{rem}
The condition that $\beta $ is a regular value is easily satisfied given Pro\-per\-ty \ref{rhocond}, e.g.\ if $\rho>0$ almost everywhere.
\end{remark}

\section{Applications to surface area estimation}\label{app1}
The formulas of Section \ref{1storder} have implications to surface area estimation. We derive some of these below.
\subsection{Thresholded images}\label{thri}
Theorem \ref{first1} and \ref{first2} apply directly to local algorithms for thresholded images:
\begin{corollary}\label{threscor}
Let $X\subseteq \R^d$ be compact gentle, $\beta\in [0,1)$, $\rho$ a PSF and $(B_l,W_l)$ a configuration. Suppose $B_i=B_l$, $W_j=W_l$, $\beta_i=\omega_j=\beta$, and $\rho_i=\rho_j=\rho$ satisfy the conditions of either Theorem \ref{first1} or \ref{first2}. Then
\begin{align}\nonumber
\lim_{a\to 0} a^{d-1}EN(\beta)_l^{a\La_c}(X)&=\lim_{a\to 0} a^{-1} \int_{\R^d} \mathds{1}_{\big\{\theta_a^X(z+aB_l)\subseteq (\beta,1]\big\}} \mathds{1}_{\big\{ \theta_a^X(z+aW_l)\subseteq [0,\beta]\big\}}dz \\ \label{th=bw}
&= \int_{\partial X} (-h(B_l\oplus \check{W}_l,n))^+d\Ha^{d-1}\\\nonumber
&= \lim_{a\to 0} a^{d-1} EN_l^{a\La_c}(X).
\end{align}
 In particular, if $\hat{V}_{d-1}$ is a local estimator for black-and-white images,
\begin{equation*}
\lim_{a\to 0} E\hat{V}(\beta)_{d-1}^{a\La_c}(X) =\lim_{a\to 0} E\hat{V}_{d-1}^{a\La_c}(X).
\end{equation*}
\end{corollary}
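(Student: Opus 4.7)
The plan is to apply Theorem \ref{first1} (respectively \ref{first2}) specialized to the singleton index sets $I=J=\{1\}$ with $B_1=B_l$, $W_1=W_l$, $\beta_1=\omega_1=\beta$, $\rho_1=\rho$. The first equality is immediate from \eqref{mean} with $f(\theta)=\mathds{1}_{\{\theta(B_l)\subseteq(\beta,1]\}}\mathds{1}_{\{\theta(W_l)\subseteq[0,\beta]\}}$, which gives
\begin{equation*}
a^{d-1}E N(\beta)_l^{a\La_c}(X)=a^{-1}\int_{\R^d}\mathds{1}_{\{\theta_a^X(z+aB_l)\subseteq(\beta,1]\}}\mathds{1}_{\{\theta_a^X(z+aW_l)\subseteq[0,\beta]\}}\,dz
\end{equation*}
exactly.

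For the middle equality, I apply the specialized theorem with $A$ any bounded measurable set containing the compact set $\partial X$. Since $\partial X\subseteq A$, every point of $\R^d\setminus\text{exo}(\partial X)$ has its unique nearest point in $\partial X\subseteq A$, so $\xi_{\partial X}^{-1}(A)=\R^d\setminus\text{exo}(\partial X)$; as $\text{exo}(\partial X)$ is the non-differentiability set of the $1$-Lipschitz distance function $d(\cdot,\partial X)$, it has Lebesgue measure zero, and the domain of integration can be freely replaced by $\R^d$. The integrand on the right of Theorem \ref{first1}/\ref{first2} reduces to $(\varphi^\rho(\beta,n)-h(B_l,n)-\varphi^\rho(\beta,n)-h(\check W_l,n))^+$; the two $\varphi^\rho$-terms cancel identically, and the additivity $h(B_l,n)+h(\check W_l,n)=h(B_l\oplus\check W_l,n)$ of support functions under Minkowski sums collapses the result to $(-h(B_l\oplus\check W_l,n))^+$, giving the middle equality.

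The third equality is the classical asymptotic formula for the unthresholded configuration count on compact gentle sets, established in \cite{am3, am2}; alternatively, it can be recovered by rerunning the proof of Theorem \ref{first1} with $\theta^X_a$ replaced by the characteristic function $\mathds{1}_X$, which forces each $\varphi^\rho(\beta,n)$ to vanish while leaving the same support-function integrand. The final identity for the estimators then follows from linearity: $\hat V(\beta)_{d-1}^{a\La_c}(X)$ and $\hat V_{d-1}^{a\La_c}(X)$ are the same finite $\R$-linear combination (with identical weights $w_l$) of $a^{d-1}N(\beta)_l^{a\La_c}$ and $a^{d-1}N_l^{a\La_c}$ respectively, and the term-by-term expectations converge to common limits. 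I do not anticipate any serious obstacle beyond this bookkeeping; the key structural input is that the constraint $\beta_i=\omega_j=\beta$ kills the $\varphi^\rho$ contributions, which is precisely the reason why thresholding cannot outperform its black-and-white analogue asymptotically.
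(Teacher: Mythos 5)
Your proposal is correct and follows essentially the same route as the paper: the first equality is the Campbell-type identity \eqref{mean}, the middle one is Theorem \ref{first1}/\ref{first2} with singleton index sets (where $\beta_i=\omega_j=\beta$ makes the $\varphi^\rho$ terms cancel and additivity of support functions under Minkowski sums gives $-h(B_l\oplus\check W_l,n)$), and the last is the known black-and-white asymptotic formula, which the paper attributes to \cite[Theorem 5]{rataj} rather than \cite{am2,am3}. Your justification for passing from $\xi^{-1}_{\partial X}(A)$ to $\R^d$ via the null set $\text{exo}(\partial X)$ is a detail the paper leaves implicit, and it is handled correctly.
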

The last equality in \eqref{th=bw} is \cite[Theorem 5]{rataj}.

The asymptotic mean of surface area estimators applied to thresholded images is thus the same as for  black-and-white images, so the asymptotic results in \cite{ kampf2, am3, johanna} carry over:
\begin{corollary}
Let $d>1$ and let $\hat{V}_{d-1}$ a local algorithm. Let $\rho$ and  $\beta $ be as in Corollary \ref{threscor}. Then $\hat{V}(\beta)_{d-1}$ is asymptotically biased on both the class of $r$-regular sets (see Definition \ref{defreg} below) and on the class of compact convex polytopes with non-empty interior.
\end{corollary}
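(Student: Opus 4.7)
The plan is to reduce the statement to the already-established black-and-white case via Corollary~\ref{threscor}. First I would verify that both families $\mathcal{S}$ under consideration -- the $r$-regular sets and the compact convex polytopes with non-empty interior -- consist of compact gentle sets, so that Corollary~\ref{threscor} is applicable for each $X \in \mathcal{S}$. For $r$-regular sets this is immediate from the uniform inner/outer ball condition (giving~(ii)) and from the bounded reach of $\partial X$ (giving~(i)); for convex polytopes with non-empty interior, gentleness follows because $\partial X$ is a finite polyhedral complex so $N(\partial X)$ has locally finite $(d-1)$-Hausdorff measure, and at each relatively interior point of a facet we can fit small balls on either side.

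Next, by Corollary~\ref{threscor} applied configuration by configuration, for every such $X$ and every local algorithm $\hat{V}_{d-1}$,
\begin{equation*}
\lim_{a\to 0} E\hat{V}(\beta)_{d-1}^{a\La_c}(X) \;=\; \lim_{a\to 0} E\hat{V}_{d-1}^{a\La_c}(X),
\end{equation*}
so the asymptotic bias of the thresholded estimator on $\mathcal{S}$ equals the asymptotic bias of the corresponding black-and-white estimator on $\mathcal{S}$. It therefore suffices to invoke the existing results: by \cite{kampf2,am3} no black-and-white local algorithm $\hat{V}_{d-1}$ is asymptotically unbiased on the class of $r$-regular sets when $d>1$, and by \cite{johanna} the same holds for the class of compact convex polytopes with non-empty interior. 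Both conclusions transfer immediately via the above identity, proving the corollary.

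The only non-routine step is checking that the hypotheses of Corollary~\ref{threscor} (and hence of Theorem~\ref{first1} or Theorem~\ref{first2}) are satisfied uniformly on $\mathcal{S}$ -- namely that each $X$ is gentle and compact. The rest is a direct citation of the black-and-white impossibility results, which have been formulated in exactly these two settings in the cited literature.
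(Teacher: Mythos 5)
Your proposal is correct and follows essentially the same route as the paper: the paper gives no separate proof, simply noting that Corollary~\ref{threscor} makes the asymptotic mean of the thresholded estimator coincide with that of the black-and-white estimator, so the known biasedness results of \cite{kampf2,am3,johanna} for $r$-regular sets and convex polytopes carry over. Your added verification that both classes consist of compact gentle sets is a sensible (if routine) filling-in of what the paper leaves implicit.
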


\subsection{Surface area estimators with $n=1$}\label{direct}
Consider the number of pixels with threshold value in some interval $I\subseteq (0,1)$
\begin{equation*}
N_{I}^{a\La_c}(X)= | \{z\in a\La_c \mid \theta^X_a(z)\in I \}|
\end{equation*}
where $|\cdot|$ denotes cardinality.  This corresponds to the case $n=1$ in Definition \ref{greyest} with the function $f=\mathds{1}_{I}$. Theorem \ref{first1} and \ref{first2} yield:
\begin{corollary}
Let $X\subseteq \R^d$ be a compact gentle set,  $(\beta,\omega] \subseteq (0,1)$, and $\rho$ a PSF. Suppose $\beta_i=\beta$, $\omega_i=\omega$, $B_i=W_j=\{0\}$, and $\rho_i=\rho_j = \rho$ satisfy the conditions of either Theorem \ref{first1} or \ref{first2}. Then
\begin{align}
\lim_{a\to 0}a^{d-1}EN_{(\beta,\omega]}^{a\La_c}(X)
\label{indic}
& = \int_{ \partial X }(\varphi^{\rho}(\beta,n) - \varphi^{\rho}(\omega,n))d\Ha^{d-1}.
\end{align}
In particular, if $\rho$ is rotation invariant, $\varphi^\rho(\beta):=\varphi^{\rho}(\beta,n)$ is independent of $n\in S^{d-1}$, so
\begin{equation}\label{countbw}
\tfrac{1}{2}(\varphi^\rho(\beta)-\varphi^\rho(\omega))^{-1}N_{(\beta,\omega]}^{a\La_c}
\end{equation}
is an asymptotically unbiased estimator for $V_{d-1}$ on the class of compact gentle sets.
\end{corollary}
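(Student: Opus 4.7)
The approach is a direct application of Theorems \ref{first1} and \ref{first2} to the trivial $1\times\dots\times 1$ configuration $B = W = \{0\}$. First I would use the mean formula \eqref{mean} with $n=1$, $q = d-1$, and $f = \mathds{1}_{(\beta,\omega]}$ to rewrite
\begin{equation*}
a^{d-1} E N_{(\beta,\omega]}^{a\La_c}(X) = a^{-1}\int_{\R^d} \mathds{1}_{\{\theta^{X,\rho}_a(z) \in (\beta,1]\}}\, \mathds{1}_{\{\theta^{X,\rho}_a(z) \in [0,\omega]\}}\, dz.
\end{equation*}
Since $X$ is compact and $0<\beta<\omega<1$, the integrand is supported in a bounded neighbourhood of $\partial X$ for all sufficiently small $a$ (by the same support argument already used in the proofs of Theorems \ref{first1} and \ref{first2}), so I may restrict the domain to $\xi^{-1}_{\partial X}(A)$ for a fixed bounded Borel set $A$ containing a neighbourhood of $\partial X$.

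I would then apply Theorem \ref{first1} (or \ref{first2}, depending on which hypothesis is in force) with $I = J = \{1\}$, $B_1 = W_1 = \{0\}$, $\beta_1 = \beta$, $\omega_1 = \omega$, and $\rho_1 = \rho$ throughout. Because $h(\{0\},n) = h(\check{\{0\}},n) = 0$, the min and max over the singleton index sets collapse, and the right-hand side of Theorem \ref{first1} reduces to $\int_{\partial X \cap A}(\varphi^\rho(\beta,n) - \varphi^\rho(\omega,n))^+\, d\Ha^{d-1}$. Since $\varphi^\rho(\cdot,n)$ is decreasing (by the monotonicity of $t\mapsto \theta^{H_n,\rho}(tn)$ noted after Theorem \ref{knownform}) and $\beta < \omega$, we have $\varphi^\rho(\beta,n) \geq \varphi^\rho(\omega,n)$, so the positive part is redundant; letting $A$ contain $\partial X$ then yields \eqref{indic}.

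For the second claim, rotation invariance of $\rho$ combined with an orthogonal change of variables shows that $\theta^{H_n,\rho}(tn)$ is independent of $n$, hence $\varphi^\rho(\beta,n)$ depends only on $\beta$. The right-hand side of \eqref{indic} then reduces to $(\varphi^\rho(\beta) - \varphi^\rho(\omega))\Ha^{d-1}(\partial X) = 2(\varphi^\rho(\beta) - \varphi^\rho(\omega)) V_{d-1}(X)$, using the standard identification of $2V_{d-1}$ with the $(d-1)$-dimensional Hausdorff measure of the boundary for compact gentle sets. Multiplying by the prefactor in \eqref{countbw} (and keeping the implicit $a^{d-1}$ scaling from Definition \ref{greyest}) then gives asymptotic unbiasedness on the class of compact gentle sets. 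There is essentially no obstacle here beyond correctly specialising Theorems \ref{first1}/\ref{first2}; the main bookkeeping point is verifying that the positive part can be dropped, which follows cleanly from the monotonicity of $\varphi^\rho(\cdot,n)$.
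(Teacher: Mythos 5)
Your proposal is correct and follows the same route as the paper, which states this corollary as a direct specialisation of Theorems \ref{first1} and \ref{first2}: factor $\mathds{1}_{(\beta,\omega]}(\theta)=\mathds{1}_{(\beta,1]}(\theta)\,\mathds{1}_{[0,\omega]}(\theta)$, apply the theorem with $I=J=\{1\}$ and $B_1=W_1=\{0\}$ so the support-function terms vanish, drop the positive part by monotonicity of $\varphi^\rho(\cdot,n)$, and use $\Ha^{d-1}(\partial X)=2V_{d-1}(X)$ for the rotation-invariant case. All the details you fill in (the expectation identity \eqref{mean}, the support/localisation argument, and the redundancy of the positive part) are the right ones.
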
 

\begin{remark}
If one or more of the open and closed ends of $(\beta,\omega]$ are changed, the corresponding $\varphi$ should be replaced by $\tilde{\varphi}$ in \eqref{indic}, yielding a similar statement for all $I\subseteq (0,1)$.

For $I = (0,1)$ and $\rho = \rho_B$ where $B$ is the closure of its interior, this implies
\begin{align*}
\lim_{a\to 0}a^{d-1}EN_{(0,1)}^{a\La_c}(X)
 = \int_{ \partial X }h(B\oplus \check{B}, n)d\Ha^{d-1}.
\end{align*}
In particular, if $X$ and $B$ are convex, this is the mixed volume $2V(X[d-1],B\oplus \check{B})$, see \cite[Section 5]{schneider}. 
\end{remark}

\begin{remark}
Even if the grey-values in the output data are grouped into finitely many (at least three) intervals, an estimator of the form $N_I$ can still be applied.
\end{remark}

Suppose $\rho $ is as in Theorem \ref{first1}. The limit in \eqref{indic} can also be written as 
\begin{equation*}
\int_{\partial X} (\varphi^\rho(\beta,n)-\varphi^\rho (\omega,n))d\Ha^{d-1}=\int_{\partial X} \mu_n((\beta,\omega])d\Ha^{d-1} =\int_{\partial X}\int_{(0,1)}\mathds{1}_{(\beta,\omega]}d\mu_n d\Ha^{d-1}
\end{equation*}
where $\mu_n$ for $n\in S^{d-1}$ is the Lebesgue--Stieltjes measure defined by the increasing right continuous function $\beta \mapsto -\varphi^\rho(\beta,n)$. 

\begin{lemma}
For any Borel set $A\subseteq (0,1)$, the function $S^{d-1} \to \R$ that is given by $n \mapsto \mu_n(A)$ is Borel measurable. In particular, for any compact gentle set $ X\subseteq \R^d$, there is a Borel measure $\mu^X$ on $(0,1)$ defined by
\begin{equation*}
\mu^X(A)=\int_{\partial X}\int_{(0,1)}\mathds{1}_{A}d\mu_n d\Ha^{d-1}.
\end{equation*}
\end{lemma}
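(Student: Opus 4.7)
The plan is a standard Dynkin-class argument, after first pinning down the measurability of $\varphi^\rho(\beta, \cdot)$ on $S^{d-1}$ for each fixed $\beta \in (0,1)$.

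The first step is to show that $n \mapsto \varphi^\rho(\beta, n)$ is Borel measurable. Since $t \mapsto \theta^{H_n, \rho}(tn)$ is continuous and decreasing, its sublevel set $\{s \in \R : \theta^{H_n, \rho}(sn) \leq \beta\}$ is a closed half-line $[\varphi^\rho(\beta, n), \infty)$, whence
\[ \{n \in S^{d-1} : \varphi^\rho(\beta, n) \leq t\} = \{n \in S^{d-1} : \theta^{H_n, \rho}(tn) \leq \beta\}. \]
It therefore suffices to show that $n \mapsto \theta^{H_n, \rho}(tn) = \int \mathds{1}_{H_n}(z)\rho(z - tn)\,dz$ is Borel measurable for each fixed $t$, which follows from dominated convergence using $\rho \in L^1$ (and in fact already gives joint continuity in $(t,n)$).

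Next, for any half-open interval $(\beta, \omega] \subseteq (0,1)$, the definition of the Lebesgue--Stieltjes measure of the increasing right-continuous function $\beta \mapsto -\varphi^\rho(\beta, n)$ gives $\mu_n((\beta, \omega]) = \varphi^\rho(\beta, n) - \varphi^\rho(\omega, n)$, which is Borel measurable in $n$ by the first step. The passage from this $\pi$-system of generators to the full Borel $\sigma$-algebra is a routine Dynkin argument. Fix $\beta_0 \in (0, 1/2)$; the restriction of $\mu_n$ to $[\beta_0, 1-\beta_0]$ has finite total mass $\varphi^\rho(\beta_0, n) - \varphi^\rho(1-\beta_0, n)$, again measurable in $n$. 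The class
\[ \mathcal{D}_{\beta_0} = \{A \in \mathcal{B}([\beta_0, 1-\beta_0]) : n \mapsto \mu_n(A) \text{ is Borel measurable}\} \]
forms a Dynkin system (closure under complementation within $[\beta_0, 1-\beta_0]$ follows from linearity combined with the finiteness; closure under countable disjoint unions follows from monotone convergence) containing the $\pi$-system of half-open subintervals. Dynkin's theorem yields $\mathcal{D}_{\beta_0} = \mathcal{B}([\beta_0, 1-\beta_0])$. Letting $\beta_0 \downarrow 0$ and invoking monotone convergence gives Borel measurability of $n \mapsto \mu_n(A)$ for every Borel $A \subseteq (0,1)$.

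With the measurability in hand, $\mu^X(A)$ is defined unambiguously as an iterated integral with value in $[0, \infty]$. Countable additivity of $\mu^X$ is a Tonelli-type interchange: for pairwise disjoint Borel sets $\{A_k\}$, $\mu_n(\bigsqcup_k A_k) = \sum_k \mu_n(A_k)$, and monotone convergence applied to the outer integral over $\partial X$ yields $\mu^X(\bigsqcup_k A_k) = \sum_k \mu^X(A_k)$. The only real analytic content is in step 1, the measurability of $n \mapsto \theta^{H_n, \rho}(tn)$, and this reduces to a standard dominated convergence estimate; everything else is routine measure theory.
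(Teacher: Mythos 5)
Your proof is correct and takes essentially the same route as the paper's (measurability of $\varphi^\rho(\beta,\cdot)$, the $\pi$-system of half-open intervals $(\beta,\omega]$, Dynkin's lemma), while usefully filling in two details the paper leaves implicit: the measurability of $n\mapsto\varphi^\rho(\beta,n)$ and the truncation to $[\beta_0,1-\beta_0]$ that the complementation step of the Dynkin argument requires when $\mu_n$ has infinite total mass on $(0,1)$. The only step to tighten is the dominated-convergence claim: substitute $w=z-tn$ first so that the $n$-dependence sits entirely in the indicator $\mathds{1}_{\{\langle w,n\rangle\le -t\}}$ of a half-space (which converges pointwise a.e.\ as $n_k\to n$), since for a merely measurable $\rho$ the pointwise convergence $\rho(z-tn_k)\to\rho(z-tn)$ can fail.
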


\begin{proof}
Since $\varphi^\rho: S^{d-1}\times (0,1) \to \R$ is measurable, $n\mapsto\mu_n(A)$ is clearly measurable for $A$ belonging to the intersection stable collection of $(\beta,\omega]$ for $\beta,\omega \in [0,1)$. The claim now follows from Dynkin's lemma.
\end{proof}

Introduce the image measure 
\begin{equation*}
\mu_a^X = a^{-1}\Ha^{d-1} \circ (\theta^{X}_a)^{-1}
\end{equation*}
on $(0,1)$. 
If $f:(0,1)\to \R$ is bounded measurable,  
\begin{equation*}
E\hat{V}(f)_{d-1}^{a\La_c}(X) = \int_{(0,1)} f d\mu_a^X.
\end{equation*}
Similarly, by standard arguments,
\begin{equation*}
\int_{(0,1)} f d\mu^X = \int_{\partial X}\int_{(0,1)}f d\mu_n d\Ha^{d-1}.
\end{equation*}

Theorem \ref{first1} and \ref{first2} yield:
\begin{corollary}\label{mukonv}
If $\rho$ is as in Theorem \ref{first1}, $\mu_a^X$ converges weakly to $\mu^X$. In particular, for any $f:(0,1)\to \R$ that is  bounded measurable and $\mu^X$-almost everywhere continuous,
\begin{equation}\label{weak}
\lim_{a\to 0} E\hat{V}(f)_{d-1}^{a\La_c}(X) = \int_{(0,1)}f d\mu^X.
\end{equation}

If $\rho$ satisfies Property \ref{rhocond} and $[\beta,\omega] \subseteq (0,1)$ contains only regular values for $\theta_0^{H_n}(\cdot;0)$ for all $n\in S^{d-1}$, the restriction of $\mu_a^X$ to  $(\beta,\omega)$ converges weakly to $\mu^X$ restricted to the same interval. In particular, \eqref{weak} holds in this situation as well if $\supp f \subseteq (\beta,\omega)$.
\end{corollary}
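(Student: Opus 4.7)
The plan is to deduce weak convergence from pointwise convergence of cumulative distribution functions together with total mass convergence, and then to obtain the integral statement via the Portmanteau theorem. For each $a>0$, let $F_a(t)=\mu_a^X((0,t])$ and $F(t)=\mu^X((0,t])$.

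For part 1, I would first establish $F_a(t)\to F(t)$ for every $t\in(0,1)$ by applying the preceding corollary with $\beta=0$ and $\omega=t$, which is permissible because Theorem \ref{first1} allows $\beta_i=0$. The resulting limit $\int_{\partial X}(\varphi^\rho(0,n)-\varphi^\rho(t,n))\,d\Ha^{d-1}$ equals $\mu^X((0,t])$, since $\mu_n$ is the Lebesgue--Stieltjes measure of the increasing right-continuous function $\beta\mapsto -\varphi^\rho(\beta,n)$, and $\lim_{\beta\downarrow 0}\varphi^\rho(\beta,n)=\varphi^\rho(0,n)$ by definition of the $\sup$. Total mass convergence $\mu_a^X((0,1))\to\mu^X((0,1))$ is obtained analogously by combining $\beta=0$ with Remark \ref{tilderem} (replacing $[0,\omega_j]$ by $[0,\omega_j)$ with $\omega_j=1$) to produce the interval $(0,1)$, after checking that $\tilde\varphi^\rho(1,n)=\lim_{\omega\uparrow 1}\varphi^\rho(\omega,n)$. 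Pointwise convergence of $F_a$ to $F$ on $[0,1]$ together with convergence of the total mass is the classical sufficient condition for weak convergence of finite Borel measures on the real line. The ``in particular'' assertion is then the standard form of the Portmanteau theorem for bounded, $\mu^X$-almost-everywhere continuous functions.

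Part 2 follows by the same scheme, applied to the restricted measures $\mu_a^X|_{(\beta,\omega)}$ and $\mu^X|_{(\beta,\omega)}$. For any $\beta\le\beta'<\omega'\le\omega$, the values $\beta'$ and $\omega'$ are regular by hypothesis, so Theorem \ref{first2} (via the analogue of the preceding corollary) yields $\mu_a^X((\beta',\omega'])\to\mu^X((\beta',\omega'])$. This gives pointwise convergence of the restricted cumulative distribution functions as well as of the total mass (taking $\beta'=\beta$ and $\omega'=\omega$), hence weak convergence of the restricted measures. If $\supp f\subseteq(\beta,\omega)$, then $\int f\,d\mu_a^X=\int f\,d\mu_a^X|_{(\beta,\omega)}$, and Portmanteau once more delivers \eqref{weak}.

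The main bookkeeping challenge is aligning the open/closed endpoints appearing in Theorems \ref{first1}--\ref{first2} with the set-theoretic values of $\mu^X$ on intervals, and treating the distinction between $\varphi^\rho$ and $\tilde\varphi^\rho$ that appears at atoms of $\mu^X$ and at the boundary values $0$ and $1$; Remark \ref{tilderem} and the explicit description of $\mu_n$ as a Lebesgue--Stieltjes measure handle these matchings uniformly.
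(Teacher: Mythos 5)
Your proposal is correct and follows essentially the same route as the paper: the paper also tests against $f=\mathds{1}_{(0,\omega]}$ via the preceding corollary to get convergence of the distribution functions, uses Remark \ref{tilderem} together with the identification $\tilde{\varphi}^\rho(1,n)=\inf_{\omega<1}\varphi^\rho(\omega,n)$ (its monotone convergence computation is exactly your endpoint check) for the total mass, and then invokes the standard weak-convergence criterion and Portmanteau, treating the regular-value/non-compact case as analogous. Your handling of the endpoint bookkeeping at $0$, $1$, $\beta$, $\omega$ is, if anything, slightly more explicit than the paper's.
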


\begin{proof}
Suppose first that $\rho$ is as in Theorem \ref{first1}. Taking $f=\mathds{1}_{(0,\omega]}$ shows that
\begin{equation*}
\lim_{a\to 0} \mu_a^X((0,\omega]) = \mu^X((0,\omega])
\end{equation*}
for all $\omega\in (0,1)$. Moreover, Remark \ref{tilderem} shows that
\begin{equation*}
\lim_{a\to 0} \mu_a^X((0,1)) = \int_{\partial X} (\varphi^\rho(0,n)-\tilde{\varphi}^\rho(1,n))d\Ha^{d-1}.
\end{equation*}
By monotone convergence, this equals
\begin{align*}
 \mu^X((0,1))&=\sup_{\omega<1} \mu^X((0,\omega])\\
 &=\sup_{\omega < 1}\int_{\partial X} (\varphi^\rho(0,n)-\varphi^\rho(\omega,n))d\Ha^{d-1}\\
 &=\int_{\partial X} (\varphi^\rho(0,n)-\inf_{\omega < 1}\{\varphi^\rho(\omega,n)\})d\Ha^{d-1}\\
 &= \int_{\partial X} (\varphi^\rho(0,n)-\tilde{\varphi}^\rho(1,n))d\Ha^{d-1}.
\end{align*}
The weak convergence follows. The non-compact case is similar. 
\end{proof}

If $\rho $ is rotation invariant, $\varphi^\rho(\beta)=\varphi^\rho(\beta,n)$, and hence $\mu:=\mu_n$, is independent of $n\in S^{d-1}$.  Thus \eqref{weak} reduces to:
\begin{corollary} \label{unbint}
Suppose $\rho $ is rotation invariant. Under the assumptions of Corollary~\ref{mukonv},
\begin{equation*}
\lim_{a\to 0} E\hat{V}(f)_{d-1}^{a\La_c}(X) = 2V_{d-1}(X) \int_{(0,1)} f d\mu.
\end{equation*}
That is, $\hat{V}(f)_{d-1}$ is asymptotically unbiased if and only if $2\int_{(0,1)} f d\mu=1$.
\end{corollary}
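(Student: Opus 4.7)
The plan is to treat this as a direct consequence of Corollary~\ref{mukonv}, with the extra simplification coming from rotation invariance. First I would invoke Corollary~\ref{mukonv} to rewrite the asymptotic mean as $\int_{(0,1)} f\, d\mu^X$, where $\mu^X$ is the boundary-integrated measure defined just above. Then, since $\rho$ is rotation invariant, $\varphi^\rho(\beta,n)$ depends only on $\beta$, so for every $n\in S^{d-1}$ the Lebesgue--Stieltjes measure $\mu_n$ coincides with the common measure $\mu$ appearing in the statement. Therefore $\mu^X$ factorizes as a product,
\begin{equation*}
\mu^X(A) \;=\; \int_{\partial X}\int_{(0,1)} \mathds{1}_A\, d\mu \, d\mathcal{H}^{d-1} \;=\; \mathcal{H}^{d-1}(\partial X)\,\mu(A),
\end{equation*}
and $\int_{(0,1)} f\, d\mu^X = \mathcal{H}^{d-1}(\partial X)\int_{(0,1)} f\, d\mu$.

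Next I would identify $\mathcal{H}^{d-1}(\partial X) = 2V_{d-1}(X)$ for compact gentle sets, which is the standard normalization of the surface area as twice the $(d-1)$-st intrinsic volume (consistent with the usage in the introduction, where $2V_{d-1}$ is called the surface area). Combining these gives
\begin{equation*}
\lim_{a\to 0} E\hat{V}(f)_{d-1}^{a\La_c}(X) \;=\; 2V_{d-1}(X)\int_{(0,1)} f\, d\mu,
\end{equation*}
which is the claimed formula. The equivalence in the final sentence then follows immediately: the right-hand side equals $V_{d-1}(X)$ for every $X$ in the class precisely when the constant $2\int_{(0,1)} f\, d\mu$ equals $1$ (the class being assumed to contain sets of positive surface area).

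The potential subtleties are quite mild here: essentially I only need to verify that the two measure-theoretic inputs I rely on apply verbatim. The first is that the hypotheses of Corollary~\ref{mukonv} (which are inherited by assumption) guarantee the weak-convergence formula, so no integrability issue arises from $f$. The second is that the factorization $\mu^X = \mathcal{H}^{d-1}(\partial X)\,\mu$ really does follow from $n$-independence of $\mu_n$; this is just Fubini applied to the definition of $\mu^X$, and so poses no obstacle. In short, this corollary is a one-line specialization of Corollary~\ref{mukonv}, with rotation invariance collapsing the directional dependence in the boundary integral.
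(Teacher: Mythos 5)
Your proposal is correct and follows exactly the route the paper takes: the paper states this corollary as an immediate specialization of Corollary~\ref{mukonv}, noting only that rotation invariance makes $\mu_n$ independent of $n$ so that the boundary integral factorizes as $\Ha^{d-1}(\partial X)\int_{(0,1)}f\,d\mu = 2V_{d-1}(X)\int_{(0,1)}f\,d\mu$. Your write-up simply makes explicit the same one-line argument the paper leaves implicit.
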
 

If $\rho$ is not rotation invariant, we can get bounds on the worst case asymptotic relative mean error instead:
\begin{corollary}\label{asworst}
Under the assumptions of Corollary \ref{mukonv}, 
\begin{equation}\label{worstcase}
\Err(\hat{V}(f)_{d-1})\leq \sup_{n\in S^{d-1}}\bigg|2\int_0^1 f d\mu_n-{1}\bigg|
\end{equation}
with equality if $\rho$ is reflection invariant or $f$ satisfies $f(x)=f(1-x)$. 

For any $f$, the function $\tilde{f}(x)=\frac{1}{2}(f(x)+f(1-x))$  satisfies $\tilde{f}(x)=\tilde{f}(1-x)$ and 
\begin{equation*}
\Err (\hat{V}(\tilde{f})_{d-1}) \leq \Err (\hat{V}(f)_{d-1}).
\end{equation*}
\end{corollary}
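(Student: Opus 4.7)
The plan is to use Corollary~\ref{mukonv} to reduce the worst-case error to a boundary integral and then analyze this integral for each claim in turn.

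First, by Corollary~\ref{mukonv} together with the defining relation $\mu^X(A)=\int_{\partial X}\mu_n(A)\,\Ha^{d-1}(dx)$, the asymptotic mean equals $\int_{\partial X}\int_{(0,1)} f\,d\mu_n\,\Ha^{d-1}(dx)$. Since $2V_{d-1}(X)=\Ha^{d-1}(\partial X)$, the asymptotic relative mean error on a given $X$ is
\begin{equation*}
R_f(X) := \frac{1}{\Ha^{d-1}(\partial X)}\int_{\partial X}g(n(x))\,\Ha^{d-1}(dx),\qquad g(n):=2\int_{(0,1)}f\,d\mu_n - 1.
\end{equation*}
Taking absolute values and bounding the integrand pointwise by $\sup_n|g(n)|$ yields the inequality \eqref{worstcase}.

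Next I would establish that $g(-n)=g(n)$ under either hypothesis. A direct computation from the definition of $\theta^{H,\rho}$ gives $\theta^{H_{-n},\rho}(t(-n))=1-\theta^{H_n,\rho}(-tn)$. When $\rho(x)=\rho(-x)$, the substitution $u=-w$ in the defining integral turns this into $\theta^{H_{-n},\rho}(t(-n))=\theta^{H_n,\rho}(tn)$, so $\varphi^\rho(\beta,-n)=\varphi^\rho(\beta,n)$ and hence $\mu_{-n}=\mu_n$. In general, the continuity of $t\mapsto\theta^{H_n}(tn)$ guaranteed by Theorems~\ref{first1} and~\ref{first2} lets one identify $\mu_{-n}$ with the pushforward $\sigma_*\mu_n$ of $\mu_n$ under the involution $\sigma(x)=1-x$. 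Then, when $f(x)=f(1-x)$, a change of variables gives $\int f\,d\mu_{-n}=\int f\circ\sigma\,d\mu_n=\int f\,d\mu_n$, so again $g(-n)=g(n)$.

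To achieve the supremum I would exhibit a family of gentle sets concentrating on a near-optimal direction. Pick $n^*\in S^{d-1}$ with $|g(n^*)|$ within $\eta$ of $\sup_n|g(n)|$ and consider the pancake $X_\eps:=(D\times\{0\})\oplus B(\eps)$, where $D\subseteq(n^*)^\perp$ is a unit $(d-1)$-disk; $X_\eps$ is compact convex with $C^{1,1}$ boundary of reach $\eps$, hence gentle. The top and bottom flat faces have constant outer normals $\pm n^*$ and total area $2\kappa_{d-1}$, whereas the toroidal rim has area $O(\eps)$. Combined with the symmetry $g(n^*)=g(-n^*)$ just established, this gives $R_f(X_\eps)\to\tfrac12(g(n^*)+g(-n^*))=g(n^*)$ as $\eps\to 0$. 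Letting $\eta\to 0$ yields the reverse inequality and hence equality.

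Finally, for the symmetrization, note that for any compact gentle $X$ the reflection $-X$ is again compact gentle with $\Ha^{d-1}(\partial(-X))=\Ha^{d-1}(\partial X)$, and the bijection $x\leftrightarrow -x$ between their boundaries negates outer normals. Using $\mu_{-n}=\sigma_*\mu_n$ one obtains $2\int\tilde f\,d\mu_n-1=\tfrac12(g(n)+g(-n))$, so $R_{\tilde f}(X)=\tfrac12(R_f(X)+R_f(-X))$. The triangle inequality gives $|R_{\tilde f}(X)|\le\Err(\hat V(f)_{d-1})$, and the supremum over $X$ concludes. The main technical step in the whole argument is the identification $\mu_{-n}=\sigma_*\mu_n$, which underlies both the $f(x)=f(1-x)$ part of the equality and the symmetrization; it hinges on the continuity of $\beta\mapsto-\varphi^\rho(\beta,n)$, which does follow from the regularity of $\theta^{H_n}$ under the hypotheses of Theorems~\ref{first1} and~\ref{first2} but requires a little care with atoms when the PSF has compact support.
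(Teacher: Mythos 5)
Your proof is correct and follows essentially the same route as the paper: the pointwise bound $\bigl|2\int_0^1 f\,d\mu_{n}-1\bigr|$ for the inequality, the relation $\mu_{-n}(A)=\mu_{n}(1-A)$ combined with thin slabs whose boundary normal measure concentrates on $\pm n^{*}$ for the equality, and the reflection $X\mapsto -X$ for the symmetrization. The only differences are cosmetic (smooth pancakes in place of the paper's thin boxes $[0,t_k n_k]\oplus\bigoplus_i[0,u_k^i]$, and a more explicit verification of $\mu_{-n}=\sigma_{*}\mu_{n}$, which the paper simply asserts).
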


\begin{proof}
$\Err (\hat{V}(f)_{d-1})$ is given by
\begin{align*}
\sup_{X\in \mathcal{S}}\bigg|\frac{\lim_{a\to 0}E\hat{V}(f)_{d-1}^{a\La_c}(X)}{V_{d-1}(X)}-1\bigg| &= \sup_{X\in \mathcal{S}}\bigg|V_{d-1}(X)^{-1}\int_{\partial X}\int_0^1 f d\mu_n d\Ha^{d-1}-1\bigg|\\
&\leq \sup_{X\in \mathcal{S}}\bigg|V_{d-1}(X)^{-1}\int_{\partial X}\bigg|\int_0^1 f d\mu_n -\frac{1}{2} \bigg|d\Ha^{d-1}\bigg| \\
&\leq \sup_{n\in S^{d-1}}\bigg|2\int_0^1 f d\mu_n-{1}\bigg|.
\end{align*}

Let $n_k\in S^{d-1}$ be a sequence with $\big|2\int_0^1 f d\mu_{n_k}-{1} \big|$ converging to the latter supremum and choose an orthonormal basis $u^1_k,\dots , u^{d-1}_k$ for $n_k^\perp$ and a sequence $t_k>0$ such that $\lim_{k\to \infty}{t_k}=0$. Observe that $\mu_n(A)=\mu_{-n}(1-A)$. Assuming $\rho $ reflection invariant or $f(x)=f(1-x)$, the asymptotic relative bias on the sets $[0,t_k n_k] \oplus \bigoplus_{i=1}^{d-1} [0, u^i_k]$ thus converges to the right hand side of the inequality.

The last claim follows from
\begin{align*}
\MoveEqLeft \bigg|\frac{\lim_{a\to 0}E\hat{V}(\tilde{f})_{d-1}^{a\La_c}(X)}{V_{d-1}(X)}-1\bigg|\\
&\leq \frac{1}{2}\bigg(\bigg|\frac{\lim_{a\to 0}E\hat{V}(f)_{d-1}^{a\La_c}(X)}{V_{d-1}(X)}-1\bigg|+\bigg|\frac{\lim_{a\to 0}E\hat{V}(f)_{d-1}^{a\La_c}(-X)}{V_{d-1}(-X)}-1\bigg|\bigg).
\end{align*}
\end{proof}

%

\section{Second order formulas}\label{sof}
To obtain a second order version of Theorem \ref{first1}, we need to be able to control the second order behaviour of the boundary of underlying set $X\subseteq \R^d$. We assume throughout the section that $d>1$. Thus we shall restrict attention to the class of $r$-regular sets:
\begin{definition}\label{defreg}
A closed subset $X\subseteq \R^d $ is called $r$-regular for some $r>0$ if for all $x\in \partial X$ there exist two balls $B_{in}$ and $B_{out}$ of radius $r$ both containing $x$ such that $B_{in}\subseteq X$ and $\indre (B_{out})\subseteq \R^d\backslash X$. The unique outward pointing normal vector at $x$ is denoted by $n(x)$.
\end{definition}
It can be proved \cite{federer}  that if $X$ is $r$-regular, then $\partial X$ is a $C^1$ manifold with $\Ha^{d-1}$-almost everywhere dif\-fe\-ren\-tia\-ble normal vector field. In particular, its principal curvatures $k_1,\dots, k_{d-1} \leq r^{-1}$, corresponding to the orthogonal principal directions $e_1,\dots,e_{d-1}\in T\partial X$, can be defined almost everywhere  as the eigenvalues of the differential $dn$. 
Thus the second fundamental form $\II_x$ on the tangent space $T_x\partial X$  is defined for $\Ha^{d-1}$-almost all $x\in \partial X$. For $\sum_{i=1}^{d-1}\alpha_ie_i\in T_x\partial X$, $\II_x$ is the quadratic form given by
\begin{equation*}
\II_x\left(\sum_{i=1}^{d-1}\alpha_ie_i \right)= \sum_{i=1}^{d-1}k_i(x)\alpha_i^2
\end{equation*}
whenever $d_xn$ is defined. In particular, the trace is $\tr  \II =k_1+\dotsm+k_{d-1}$.

The integrated mean curvature $2\pi(d-1)^{-1} V_{d-2}$ can thus be defined \cite{federer} for $r$-regular sets by 
\begin{equation*}
V_{d-2}(X)=\frac{1}{2\pi} \int_{\partial X} \tr(\II) d\Ha^{d-1}.
\end{equation*}

Let $T^\eps \partial X =\{(x,\alpha)\mid \alpha\in T_x\partial X, |\alpha|<\eps \}$.
We need the following lemma. A proof can be found e.g.\ in \cite{am2}.
\begin{lemma}\label{boundary}
Let $X$ be an $r$-regular set.
There is a unique function $q: T^r \partial X \to \R$ such that for $\alpha \in T_x\partial X$,  $q(x,\alpha)$ is the unique $q\in [-r,r]$ with $x+\alpha + qn(x)\in \partial X$. There is a constant $C>0$ such that
\begin{equation*}
q(x,\alpha)\leq C|\alpha|^2
\end{equation*}
for all $(x,\alpha)\in T ^r\partial X$. Moreover,
\begin{equation*}
\lim_{a\to 0} a^{-2}q(x,a\alpha) = -\tfrac{1}{2}\II_x(\alpha).
\end{equation*}
\end{lemma}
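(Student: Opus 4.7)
For $(x,\alpha)\in T^r\partial X$, the plan is to extract both existence and the quadratic bound from the two tangent balls $B_{in}=B(x-rn(x),r)\subseteq X$ and $B_{out}=B(x+rn(x),r)$ with $\indre(B_{out})\subseteq \R^d\setminus X$ guaranteed by $r$-regularity. Using $\alpha\perp n(x)$, the intersection of the line $\ell(q)=x+\alpha+qn(x)$ with these balls is a direct calculation: $\ell(q)\in B_{in}$ iff $q\in[-r-\sqrt{r^2-|\alpha|^2},\,-r+\sqrt{r^2-|\alpha|^2}]$, and $\ell(q)\in\indre(B_{out})$ iff $q\in(r-\sqrt{r^2-|\alpha|^2},\,r+\sqrt{r^2-|\alpha|^2})$. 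In particular $\ell(-r)\in\indre(X)$ and $\ell(r)\in\R^d\setminus X$, so applying the intermediate value theorem to the signed distance from $\partial X$ produces at least one $q^*\in(-r,r)$ with $\ell(q^*)\in\partial X$. Any such $q^*$ must avoid the two forbidden intervals, so $|q^*|\leq r-\sqrt{r^2-|\alpha|^2}=|\alpha|^2/(r+\sqrt{r^2-|\alpha|^2})\leq|\alpha|^2/r$, which gives the quadratic bound with $C=1/r$.

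For uniqueness of the crossing point in $[-r,r]$, I would argue by contradiction: suppose $\ell(q_0),\ell(q_1)\in\partial X$ with $q_0<q_1$ in $[-r,r]$. Repeating the ball argument now with $B_{in}^{y_0}$ and $B_{out}^{y_0}$ centered at $y_0=\ell(q_0)$ shows that $\ell(q_1)$ lies outside both open balls; expanding $|\ell(q_1)-(y_0\mp rn(y_0))|^2\geq r^2$ gives $|\langle n(x),n(y_0)\rangle|\leq(q_1-q_0)/(2r)$, and combining with the length bound $q_1-q_0\leq 2|\alpha|^2/r$ from the previous step forces $n(y_0)$ to be essentially orthogonal to $n(x)$. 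The analogous bound at $y_1$ yields the same conclusion for $n(y_1)$. Together with the fact that the normal field on $\partial X$ of an $r$-regular set is Lipschitz with constant $1/r$ (a standard consequence of reach $\geq r$, cf.\ \cite{federer}), this is incompatible with both $y_0$ and $y_1$ lying on $\ell$. This is the step where I expect the main obstacle: uniqueness genuinely requires the global tube structure of $\partial X\oplus B(r)$ rather than merely the two local balls at $x$, and making the normal-Lipschitz argument tight across the whole range $|\alpha|<r$ is the delicate point.

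For the limit, the standard second-order theory of $r$-regular sets provides a Taylor expansion of $\partial X$ at points where the normal field is differentiable. At $\Ha^{d-1}$-almost every $x\in\partial X$ the form $\II_x$ is defined, and $\partial X$ is locally the graph of a function $\tilde q:T_x\partial X\to \R$ with $\tilde q(0)=0$, $D\tilde q(0)=0$, and $\tilde q(\alpha)=-\tfrac12\II_x(\alpha)+o(|\alpha|^2)$; the minus sign arises because $n(x)$ is the \emph{outward} normal, so $\partial X$ curves towards $-n(x)$. The uniqueness from the previous paragraph forces $q(x,\alpha)=\tilde q(\alpha)$ for $|\alpha|$ small, and bilinearity of $\II_x$ then gives $a^{-2}q(x,a\alpha)=-\tfrac12\II_x(\alpha)+o(1)\to -\tfrac12\II_x(\alpha)$ as $a\to 0$, which is the claimed limit.
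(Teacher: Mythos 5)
First, note that the paper itself does not prove this lemma; it only points to \cite{am2}, so there is no in-paper argument to compare yours against. Your treatment of existence and of the quadratic bound is correct and complete: the computation of where the line $\ell(q)=x+\alpha+qn(x)$ meets the two tangent balls, the connectedness argument producing a boundary point, and the estimate $|q^*|\leq r-\sqrt{r^2-|\alpha|^2}\leq |\alpha|^2/r$ are exactly the standard route and need no repair. The limit statement is also acceptable: at a point $x$ where $n$ is differentiable, the $C^{1,1}$ graph representation plus first-order differentiability of the gradient of the graph function at $0$ gives $q(x,a\alpha)=-\tfrac12\II_x(a\alpha)+o(a^2)$ by the integral form of Taylor's theorem, and your sign convention checks out against the sphere.

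The genuine gap is where you suspected it: uniqueness. Your two ingredients are (i) $|\langle n(x),n(y_0)\rangle|\leq (q_1-q_0)/(2r)\leq |\alpha|^2/r^2$ from the tangent balls at $y_0$, and (ii) $\langle n(x),n(y_0)\rangle\geq 1-\tfrac{1}{2r^2}|y_0-x|^2\geq 1-\tfrac{1}{2r^2}\bigl(|\alpha|^2+|\alpha|^4/r^2\bigr)$ from the Lipschitz bound on the normal field. Writing $u=|\alpha|^2/r^2$, these are contradictory only when $u+\tfrac12 u(1+u)<1$, i.e.\ $u<(\sqrt{17}-3)/2\approx 0.56$, so the argument as written proves uniqueness only for $|\alpha|\lesssim 0.75\,r$, not on all of $T^r\partial X$. (A sharper variant, replacing (ii) by the fact that the connected local sheet of $\partial X$ through $x$ is a graph over the full disc of radius $r$ with $\langle n(x),n(y_0)\rangle\geq \sqrt{r^2-|\alpha|^2}/r$, still only reaches $|\alpha|<\tfrac{\sqrt3}{2}r$.) Closing the range $|\alpha|$ near $r$ requires a different mechanism — e.g.\ the bijectivity of $(p,t)\mapsto p+tn(p)$ on $\partial X\times(-r,r)$ (the tube structure from \cite{federer}), which rules out a second sheet of $\partial X$ inside the lens region $\{|q|\leq r-\sqrt{r^2-|\alpha|^2}\}$ — and that step is not supplied. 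Since every subsequent use of the lemma in the paper (e.g.\ the uniform bound on $|\tfrac12\II(z_{n^\perp})+a^{-2}q(x,az_{n^\perp})|$ for $z_{n^\perp}\in T_x^{2D}\partial X$ and $a\leq r/(2D)$) invokes $q$ on the full range $|\alpha|<r$, this is not a cosmetic restriction; you need to either complete the uniqueness proof on all of $T^r\partial X$ or defer to \cite{am2} as the paper does.
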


We also make the following observation:
\begin{lemma}\label{decrease}
If $X$ is $r$-regular, $K>0$, and $\rho$ has compact support, then for all $a$ sufficiently small, the map $t \mapsto \theta^X_a(t;s)$ is decreasing on the interval $[-a^{-1}r,a^{-1}r]$ for all $s\in B(K)$.
\end{lemma}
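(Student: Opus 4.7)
My plan is to reduce monotonicity of $\theta^X_a(\cdot\,;s)$ to pointwise monotonicity of indicator functions under a nonnegative weight. After the change of variables $w=z-(x+a(tn+s))$ in the definition one gets
\begin{equation*}
\theta^X_a(t;s)=\int \mathds{1}_X(\ell_w(t))\,\rho_a(w)\,dw,\qquad \ell_w(t):=x+a(tn+s)+w,
\end{equation*}
and since $\supp\rho\subseteq B(D)$ for some $D>0$, only $w\in B(aD)$ contributes. It will therefore suffice to show that for every such $w$ and every sufficiently small $a$, the map $t\mapsto \mathds{1}_X(\ell_w(t))$ is a nonincreasing step function on the full interval $[-a^{-1}r,a^{-1}r]$; the lemma then follows by integrating against $\rho_a\geq 0$.

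The key input will be Lemma~\ref{boundary}. First I would decompose $as+w=v+un$, writing $v=\pr_{n^\perp}(as+w)\in T_x\partial X$ and $u=\langle as+w,n\rangle$, so that $\ell_w(t)=x+v+(at+u)n$ is a line parallel to $n=n(x)$, and $|v|,|u|\leq a(K+D)$. Imposing $a<r/(K+D)$ puts $(x,v)\in T^r\partial X$, so Lemma~\ref{boundary} supplies a unique $q=q(x,v)\in[-r,r]$ with $x+v+qn\in\partial X$. By $r$-regularity, the segment endpoints $x+v-rn$ and $x+v+rn$ lie in $B_{in}\subseteq X$ and $\indre(B_{out})\subseteq \R^d\setminus X$ respectively (both because $|v|<r$), so the function $\tau\mapsto \mathds{1}_X(x+v+\tau n)$ flips from $1$ to $0$ exactly at $\tau=q$ on $\tau\in[-r,r]$.

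What remains is to absorb the small overshoot in $\tau$ produced by $u$. As $t$ ranges over $[-a^{-1}r,a^{-1}r]$, the coefficient $\tau(t):=at+u$ ranges over $[-r+u,r+u]\subseteq [-r-a(K+D),\,r+a(K+D)]$. For $\tau\in[r,r+a(K+D)]$ the distance from $x+v+\tau n$ to the centre $x+rn$ of $B_{out}$ is at most $\sqrt{|v|^2+(\tau-r)^2}\leq \sqrt{2}\,a(K+D)$, which is strictly less than $r$ once $a<r/(\sqrt{2}(K+D))$, so these points lie in $\indre(B_{out})$ and give $\mathds{1}_X=0$. The symmetric estimate at the other end, using $B_{in}$, puts such points in $X$ and gives $\mathds{1}_X=1$. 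Composing with the increasing affine map $t\mapsto\tau(t)$, one concludes that $t\mapsto \mathds{1}_X(\ell_w(t))$ is nonincreasing on $[-a^{-1}r,a^{-1}r]$, with a single jump at $t^*=(q-u)/a$.

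I do not expect a serious obstacle: Lemma~\ref{boundary} already encapsulates the hard content (uniqueness of the crossing of a near-normal line with $\partial X$), and the rest is elementary bookkeeping with the tangent balls $B_{in}, B_{out}$. The only point that needs attention is that the coefficient $\tau(t)$ mildly exceeds $[-r,r]$ because $u\neq 0$, and this is precisely where one must invoke $r$-regularity a second time to show that the overshoot remains inside $B_{in}$ or $\indre(B_{out})$; this forces a smallness condition on $a$ of the form $a\lesssim r/(K+D)$ that depends only on $r$, $K$, and $D$.
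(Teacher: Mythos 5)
Your proof is correct and is essentially the paper's argument in pointwise form: the paper's key set inclusion $(x+a(tn+s+B(D)))\cap X - a\nu n \subseteq (x+a((t-\nu)n+s+B(D)))\cap X$ is exactly the statement that $t\mapsto \mathds{1}_X(x+a(tn+s)+w)$ is nonincreasing for each $w$ in the (scaled) support of $\rho$, which is what you establish via the single crossing from Lemma~\ref{boundary} and the tangent balls. Your explicit treatment of the overshoot beyond $[-r,r]$ just spells out what the paper hides in ``whenever $a$ is sufficiently small.''
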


\begin{proof}
Suppose $\supp \rho \subseteq B(D)$.
 By Lemma \ref{boundary},
\begin{equation*}
(x+a(tn + s + B(D)))\cap X -a\nu n \subseteq (x+a((t-\nu)n + s + B(D))) \cap X
\end{equation*}
for all $s\in B(K)$, $\nu>0$, and $t,t-\nu \in [-a^{-1}r,a^{-1}r]$ whenever $a$ is sufficiently small. Hence,
\begin{align*}
\theta_a^X(t;s) &= \int_{(x+a(tn + s + B(D)))\cap X -\nu n}\rho_a(z-(x+a((t-\nu)n + s) )) dz \\
&\leq  \int_{(x+a((t-\nu)n + s + B(D)))\cap X}\rho_a(z-(x+a((t-\nu)n + s))) dz\\
&=\theta_a^X(t-\nu;s).
\end{align*}
\end{proof}

For $z\in \R^d$ and $n\in S^{d-1}$, we write $z=(z_{n^\perp},z_n)$ where $z_n=\langle z,n \rangle \in \R$ and $z_{n^\perp}\in n^\perp$ is the projection of $ z$ onto $n^\perp$. 

Let $x\in \partial X$. Define the quadratic approximation $Q(x)$ to $X$ at $x$ by
\begin{equation*}
Q(x)=\{z\in \R^d \mid |(z-x)_{n}| \leq -\tfrac{1}{2}\II_x((z-x)_{n^\perp}) \}.
\end{equation*}
If $x\in \partial X$ is understood, we simply write $Q:=Q(x)$.


\begin{definition}
Suppose $\rho$ is continuous with compact support.
For $x\in \partial X$ and $\beta_0$ a regular value for $\theta^H_0(\cdot;s)$, define
\begin{equation*}
\psi^{Q(x)}(\beta_0;s) = \frac{\psi_1\big(t^{H}(0,\beta_0;s);s\big)}{\psi_2\big(t^{H}(0,\beta_0;s);s\big)}=- \psi_1\big(t^{H}(0,\beta_0;s);s\big)\frac{d}{d\beta}t^{H}(0,\beta;s)_{\mid \beta = \beta_0}
\end{equation*}
where
\begin{align*}
\psi_1(t;s)&=-\frac{1}{2}\int_{n(x)^\perp}  \II_x(z)  \rho(z-s_{n^\perp}, -t-s_n)) dz\\
\psi_2(t;s)&=-\frac{d}{dt}\theta^H_0(t;s)= \int_{n(x)^\perp}   \rho(z-s_{n^\perp}, -t-s_n)) dz.
\end{align*}
\end{definition}

\begin{lemma}\label{tHtQ}
Let $X$ be $r$-regular and $x\in \partial X$. Suppose $\rho $ is continuous and has compact support. Let $B\subseteq \R^d$ compact and assume $\beta_0\in (0,1)$ is a regular value for $\theta^H_0(\cdot ;s)$ for all $s\in B$. Then the function $(a,s) \mapsto t^Q(a,\beta_0;s)$ extends to a well-defined $C^1$ function on $(-\eps, \eps) \times U $ for some $\eps >0 $ and $U\supseteq B$ open so that for all $(a,s) \in (0, \eps) \times U $, $t^Q(a,\beta_0;s)$ is the unique $t$ with $\theta^{Q}_a(t;s)=\beta_0$. Moreover,
\begin{equation*}
t^Q(a,\beta_0;s)= t^H(0,\beta_0;s)+ a\psi^{Q}(\beta_0;s) + o(a)
\end{equation*}
and $\sup_{s\in B}|t^Q(a,\beta_0;s)- t^H(a,\beta_0;s)|\in O(a)$.
\end{lemma}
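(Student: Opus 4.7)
The plan is to apply the implicit function theorem to an explicit formula for $\theta^Q_a(t;s)$ obtained by a single change of variables.

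First I would rewrite $\theta^Q_a(t;s) = \int_Q \rho_a(z - x - a(tn+s))\,dz$ via the substitution $v = a^{-1}(z - x - a(tn + s))$. Because $Q$ is defined by a quadratic inequality in $(z-x)_n$ and $(z-x)_{n^\perp}$, the rescaling collapses the $a$-dependence in the domain of integration into a single smooth parameter, giving
\begin{equation*}
\theta^Q_a(t;s) = \int_{n^\perp} \int_{-\infty}^{-t - s_n - \frac{a}{2}\II_x(w + s_{n^\perp})} \rho(w + \tau n)\, d\tau\, dw =: G(a,t,s).
\end{equation*}
The right-hand side is well defined for all $(a,t,s) \in \R \times \R \times \R^d$. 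Since $\rho$ is continuous with compact support and $\II_x$ is a continuous quadratic form, Leibniz's rule shows that $G$ is $C^1$ in its arguments, with $G(0,t,s) = \theta^H_0(t;s)$, $\partial_t G(0,t,s) = -\psi_2(t;s)$, and $\partial_a G(0,t,s) = \psi_1(t;s)$.

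Next I would apply the implicit function theorem to $G(a,t,s) = \beta_0$ at each point $(0, t^H(0,\beta_0;s_0), s_0)$ with $s_0 \in B$. The required nondegeneracy $\partial_t G(0, t^H; s_0) = -\psi_2(t^H; s_0) \neq 0$ is precisely the regular value hypothesis, so a local $C^1$ solution exists. Compactness of $B$ together with continuity of $s \mapsto t^H(0,\beta_0;s)$ and of $\psi_2$ furnishes a finite subcover, hence a uniform $\eps > 0$ and an open set $U \supseteq B$ on which a $C^1$ function $t^Q(a,\beta_0;s)$ is defined with $G(a,t^Q,s) = \beta_0$. To identify this with the original definition for $a \in (0,\eps)$, note that $\partial_t G \leq 0$ everywhere (since $\rho \geq 0$) and that $\partial_t G(a, t^Q, s) < 0$ by continuity; combined with $\lim_{t\to -\infty} G = 1$ and $\lim_{t\to +\infty} G = 0$, strict decrease at $t^Q$ forces $t^Q(a,\beta_0;s)$ to be the unique $t$ solving $\theta^Q_a(t;s) = \beta_0$.

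Finally, the implicit function theorem formula yields
\begin{equation*}
\frac{\partial t^Q}{\partial a}(0,s) = -\frac{\partial_a G(0, t^H, s)}{\partial_t G(0, t^H, s)} = \frac{\psi_1(t^H(0,\beta_0;s); s)}{\psi_2(t^H(0,\beta_0;s); s)} = \psi^Q(\beta_0; s),
\end{equation*}
so a first-order Taylor expansion gives $t^Q(a,\beta_0;s) = t^H(0,\beta_0;s) + a\psi^Q(\beta_0;s) + o(a)$. Uniformity of the $o(a)$ remainder over $s \in B$ follows from uniform $C^1$ dependence of $t^Q$ on the compact set $\{0\} \times B$; since $\theta^H_a$ is $a$-independent, $t^H(a,\beta_0;s) = t^H(0,\beta_0;s)$, and combining the expansion with boundedness of $\psi^Q(\beta_0;\cdot)$ on $B$ yields the final $O(a)$ bound. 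The main technical nuisance is making the implicit function theorem application uniform across the compact parameter set $B$, which is handled by a standard covering argument.
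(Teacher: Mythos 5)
Your proposal is correct and follows essentially the same route as the paper: both construct a $C^1$ extension of $\theta^Q_a(t;s)$ to all $(a,t,s)$ by a rescaling that turns the quadratic boundary into a linear perturbation in $a$ (your single substitution produces exactly the paper's function $\beta(a,t;s)$, which the paper reaches by splitting $\theta^Q_a=\theta^H_a+\theta^{Q\setminus H}_a-\theta^{H\setminus Q}_a$), then apply the implicit function theorem with the regular-value hypothesis as nondegeneracy and a compactness argument over $B$ for the uniform $\eps$, $U$, and $O(a)$ bound. Your explicit monotonicity argument for uniqueness of the root for $a>0$ is a small point the paper leaves implicit, but otherwise the two proofs coincide.
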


\begin{proof}
First observe that
\begin{align*}
\theta_a^Q(t;s)= \theta^H_a(t;s) + \theta^{Q\backslash H}_a( t;s) - \theta^{H \backslash Q}_a( t;s). 
\end{align*}
Suppose $\supp \rho , B \subseteq B(D)$. We first rewrite the latter terms:
\begin{align*}
\theta_a^{Q\backslash H}( t;s)& =\int_{Q\backslash H } \rho_a(z-(x+a(tn+s))) dz\\
&= a^{-d}\int_{n^\perp}\int_{0}^{0\vee \big(-\tfrac{1}{2}\II(z_{n^\perp})\big)} \rho(a^{-1}z_{n^\perp}-s_{n^\perp},a^{-1}z_n-( t+s_n)) dz_{n} dz_{n^\perp}\\
&= a^{-1}\int_{B^{n^\perp}(2D)}\int_{0}^{0\vee \big(-a^{2}\tfrac{1}{2}\II(z_{n^\perp})\big)} \rho(z_{n^\perp}-s_{n^\perp},a^{-1}z_n-( t+s_n)) dz_n dz_{n^\perp}\\
&= a \int_{B^{n^\perp}(2D)}\int_{0}^{0\vee \big(-\tfrac{1}{2}\II(z_{n^\perp})\big)} \rho(z_{n^\perp}-s_{n^\perp},az_n - (t+s_n)) dz_{n} dz_{n^\perp}
\end{align*}
where $B^{n^\perp}(2D)$ is the ball in $n^{\perp}$ of radius $2D$. Similarly,
\begin{align*}
\theta^{H\backslash Q}_a( t;s)= a \int_{B^{n^\perp}(2D)}\int_{0\wedge \big(-\tfrac{1}{2}\II(z_{n^\perp})\big)}^{0} \rho(z_{n^\perp}-s_{n^\perp},az_n- (t+s_n)) dz_n dz_{n^\perp}.
\end{align*}
This computation shows that $\theta^{Q}_a(t;s)$ extends continuously to a well-defined function for all $(a,t,s)\in \R^{2+d}$.
Denote this function by 
\begin{equation*}
\beta(a,t;s)= \theta^H_0(t;s)+ a \int_{B^{n^\perp}(2D)}\int_{0}^{-\tfrac{1}{2} \II(z_{n^\perp})} \rho(z_{n^\perp}-s_{n^\perp}, az_n-( t+s_n)) dz_n dz_{n^\perp}.
\end{equation*}
The assumptions on $\rho$ imply that $\beta(a,t;s)$ is $C^1$ in $(a,t,s)$ and
\begin{align*}
\frac{d}{da}\beta(a,t;s)={}&  -\frac{1}{2}\int_{B^{n^\perp}(2D)}  \II(z_{n^\perp})\rho(z_{n^\perp}-s_{n^\perp}, -a\tfrac{1}{2}\II(z_{n^\perp})-( t+s_n )) dz_{n^\perp}\\
\frac{d}{dt}\beta(a,t;s)={} & \frac{d}{dt}\theta^H_0(t;s) + a\int_{B^{n^\perp}(2D)}\Big( \rho(z_{n^\perp}-s_{n^\perp}, -(t+s_n))\\
&-\rho(z_{n^\perp}-s_{n^\perp},- a \tfrac{1}{2}\II(z_{n^\perp}) -(t+s_n))\Big)dz_{n^\perp}.
\end{align*}
Again, these functions are clearly continuous.

In particular, at $a=0$ we obtain
\begin{align*}
\beta(0,t;s)&= \theta^H_0(t;s)\\
\frac{d}{da}\beta(a,t;s)_{\mid a=0}& = -\frac{1}{2}\int_{n^\perp} \II(z_{n^\perp})
\rho(z_{n^\perp}-s_{n^\perp}, -(t+s_n)) dz_{n^\perp} \\
\frac{d}{dt}\beta(a,t;s)_{\mid a=0} &= \frac{d}{dt}\theta^H_0(t;s).
\end{align*}

Since $\frac{d}{dt}\beta(0,t;s)_{\mid t=t^H(0,\beta_0;s)} < 0$ for all $s\in B$ by assumption, the implicit function theorem yields that in a neighborhood of the compact set $\{0\}\times B$, the solution $t$ to $\beta(a,t;s)=\beta_0$ is given by a $C^1$ function $(a,s)\mapsto t^Q(a,\beta_0;s)$ with 
\begin{equation*}
t^Q(a,\beta_0;s) = t^H(0,\beta_0;s) - a\frac{\frac{d}{da} \beta(a,t^H(0,\beta_0;s);s )_{\mid a=0}}{\frac{d}{dt}\beta(0,t;s)_{\mid t=t^H(0,\beta_0;s)}} + o(a).
\end{equation*}

The last claim follows from the mean value theorem, since for $0\leq a_0 \leq \frac{\eps}{2}$,
\begin{align*}
|t^Q(a_0,\beta_0;s)- t^H(0,\beta_0;s)| &=a_0 \Big|\frac{d}{da}t^Q(a,\beta_0;s)_{\mid a=a'}\Big|\\
&\leq a_0\sup \Big\{ \Big|\frac{d}{da}t^Q(a,\beta_0;s)\Big|\, \Big| a \in \big[0,\tfrac{\eps}{2}\big], s \in B \Big\}
\end{align*}
where  $a'\in [0,a_0]$ and the latter supremum is finite by continuity.
\end{proof}

\begin{lemma}\label{tXtQ}
Let $X$ be $r$-regular, $x\in \partial X$, and $B\subseteq \R^d$ compact.
Let $\rho$ be continuous with compact support.  Then there is a function $\lambda(a)\in o(a)$ such that 
\begin{equation*}
|\theta^{Q}_a(t;s)-\theta^X_a(t;s)| \leq \lambda(a)
\end{equation*}
for all $t\in [-a^{-1}r,a^{-1}r]$ and $s\in B$.

If, moreover, $\beta_0$ is a regular value for $\theta^{H}_0(\cdot;s)$ for all $s\in B$, there is a constant $M>0$  such that for all $s\in B$,
\begin{equation*}
|t^{Q}(a,\beta_0;s)-t^X(a,\beta_0;s)| \leq M\lambda(a).
\end{equation*}
\end{lemma}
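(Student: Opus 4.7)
The plan for the first claim is to reduce $|\theta^X_a(t;s) - \theta^Q_a(t;s)|$ to a volume estimate on the symmetric difference $X \triangle Q$ inside the $aD$-ball centered at $y = x + a(tn+s)$, where $D$ is chosen so that $\supp \rho \subseteq B(D)$. We have
\begin{equation*}
|\theta^X_a(t;s) - \theta^Q_a(t;s)| \leq \|\rho_a\|_\infty \cdot \Ha^d((X \triangle Q) \cap B(y, aD)) \leq Ca^{-d}\Ha^d((X \triangle Q) \cap B(y, aD)),
\end{equation*}
so it suffices to show the latter volume is $o(a^{d+1})$ uniformly in $(t,s)$. I would split the range into a near regime $|t| \leq K$ and a far regime $K \leq |t| \leq a^{-1}r$, with $K$ a sufficiently large constant depending on $D$ and $\sup_{s \in B}|s|$.

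In the near regime, $B(y, aD)$ lies in a ball of radius $O(a)$ about $x$, so both $\partial X$ and $\partial Q$ can be described as graphs over $n^\perp$: by Lemma \ref{boundary}, $\partial X$ is the graph of $q(x, \cdot)$ and $\partial Q$ the graph of $-\tfrac{1}{2}\II_x(\cdot)$, while $q(x, a\alpha) + \tfrac{a^2}{2}\II_x(\alpha) = o(a^2)$ uniformly for $\alpha$ in a fixed compact subset of $n^\perp$ (the pointwise convergence in Lemma \ref{boundary} being upgradeable to uniformity via the quadratic bound $|q(x,\alpha)| \leq C|\alpha|^2$ together with $C^{1,1}$-regularity of the boundary of an $r$-regular set). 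The two graphs thus differ vertically by $o(a^2)$ over a transverse region of diameter $O(a)$, giving $\Ha^d((X \triangle Q) \cap B(y, aD)) = o(a^2)\cdot O(a^{d-1}) = o(a^{d+1})$. In the far regime, $K$ is chosen so that $B(y, aD) \subseteq B_{in}$ (for $t \leq -K$) or $\indre(B_{out})$ (for $t \geq K$) for all small $a$; since $B_{in} \subseteq X \cap Q$ and $\indre(B_{out}) \subseteq X^c \cap Q^c$ (the inclusions for $Q$ following from the curvature bound $k_i \leq r^{-1}$), the symmetric difference is empty there and the contribution vanishes. Combining the two regimes yields $|\theta^X_a - \theta^Q_a| \leq \lambda(a)$ with $\lambda(a) \in o(a)$.

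For the second claim, Lemma \ref{tHtQ} provides that $(a,s) \mapsto t^Q(a, \beta_0;s)$ is $C^1$ on a neighborhood of $\{0\}\times B$, and that $\frac{d}{dt}\theta^Q_a(t;s) = \frac{d}{dt}\beta(a,t;s)$ is jointly continuous in $(a,t,s)$. At $a=0$ and $t = t^H(0,\beta_0;s)$ this derivative equals $\frac{d}{dt}\theta^H_0(t;s)\big|_{t = t^H(0,\beta_0;s)}$, which is strictly negative since $\beta_0$ is a regular value. By continuity and compactness of $B$, there exists $c > 0$ with $|\frac{d}{dt}\theta^Q_a(t;s)| \geq c$ in a neighborhood of $t^Q(a,\beta_0;s)$, uniformly in $s \in B$ for small $a$. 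Meanwhile, $t^X(a, \beta_0;s)$ is well-defined by Lemma \ref{decrease} (monotonicity) together with the intermediate value theorem applied to $\theta^X_a(\cdot;s)$, which is close to $\theta^Q_a(\cdot;s)$ and still crosses $\beta_0$. Then
\begin{equation*}
|\theta^Q_a(t^X;s) - \theta^Q_a(t^Q;s)| = |\theta^X_a(t^X;s) - \theta^Q_a(t^X;s)| \leq \lambda(a),
\end{equation*}
and the mean value theorem gives $|t^X - t^Q| \leq c^{-1}\lambda(a)$, so one may take $M = c^{-1}$.

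The hardest part is the uniform-in-$\alpha$ refinement of Lemma \ref{boundary} needed in the near regime, since the lemma as stated gives only pointwise convergence; this requires an extra compactness or regularity argument based on properties of $r$-regular sets. A secondary subtlety is the geometric verification that the local subgraph description of $\partial X$ at $x$ governs all of $B(y, aD)$ throughout the near regime, i.e., that the nearest-point projection $\xi_{\partial X}$ from points in $B(y, aD)$ lands near $x$ so that the parametrization via $q(x, \cdot)$ is indeed applicable.
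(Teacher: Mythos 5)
Your overall strategy is sound and your second claim is proved essentially as in the paper (the paper phrases the final step as a one-sided estimate $\theta^Q_a(t+\nu;s)-\theta^Q_a(t;s)\leq -M_1\nu$ derived from the uniform negative bound on $\frac{d}{dt}\beta(a,t;s)$ near $\{0\}\times B$ established in the proof of Lemma \ref{tHtQ}, rather than as a mean value theorem applied between $t^X$ and $t^Q$, but this is the same argument; you should just make explicit that $t^X(a,\beta_0;s)$ lands in the neighbourhood where the derivative bound holds, which follows from monotonicity of $\theta^X_a(\cdot;s)$ via Lemma \ref{decrease} together with the $\lambda(a)$-closeness to $\theta^Q_a$). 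For the first claim your route genuinely differs from the paper's at the decisive step, and in a way that leaves work undone. The paper does not split into near and far regimes and does not need any uniform-in-$\alpha$ strengthening of Lemma \ref{boundary}: it writes $\int_{X\triangle Q}\rho_a(z-(x+a(tn+s)))\,dz$ exactly by Fubini over $n^\perp$, where the inner integral runs over the interval between $a^{-2}q(x,az_{n^\perp})$ and $-\tfrac12\II_x(z_{n^\perp})$ and the support of $\rho$ confines $z_{n^\perp}$ to $B^{n^\perp}(2D)$ independently of $t$ and $s$; this yields the explicit bound $\lambda(a)=a\sup\rho\int_{B^{n^\perp}(2D)}|\tfrac12\II(z_{n^\perp})+a^{-2}q(x,az_{n^\perp})|\,dz_{n^\perp}$, and $\lambda(a)\in o(a)$ follows from \emph{dominated} convergence, using only the pointwise limit and the quadratic domination $|q(x,\alpha)|\leq C|\alpha|^2$ that Lemma \ref{boundary} already supplies. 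Your version instead needs $\sup_{|\alpha|\leq K}|a^{-2}q(x,a\alpha)+\tfrac12\II_x(\alpha)|\to 0$, which you correctly flag as unproven: it does not follow from the pointwise statement of Lemma \ref{boundary} alone. It can be rescued — the functions $\alpha\mapsto a^{-2}q(x,a\alpha)$ are equi-Lipschitz on compacts because the normal field of an $r$-regular set is Lipschitz, so pointwise convergence upgrades to uniform convergence by Arzel\`a--Ascoli — but this is exactly the extra argument the Fubini/dominated-convergence route renders unnecessary. Your far-regime observation ($B_{in}\subseteq X\cap Q$, $\indre(B_{out})\subseteq X^c\cap Q^c$ via the curvature bounds $|k_i|\leq r^{-1}$) is correct but likewise not needed in the paper's formulation, since the support restriction on $\rho$ already makes the bound uniform over all $t\in[-a^{-1}r,a^{-1}r]$.
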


\begin{proof}
Suppose $B,\supp \rho \subseteq B(D)$. Observe that
\begin{align*}
\MoveEqLeft |\theta^{Q}_a(t;s)-\theta^X_a(t;s)| \leq \int_{Q\backslash X\cup X\backslash Q} \rho_a(z-(x+a(tn+s)))dz \\
&= a\int_{n^\perp}\int_{\big(-\tfrac{1}{2}\II(z_{n^\perp})\big)\wedge a^{-2}q(x,az_{n^\perp}) }^{\big(-\tfrac{1}{2} \II(z_{n^\perp}) \big)\vee a^{-2}q(x,az_{n^\perp})} \rho(z_{n^\perp}-s_{n^\perp},az_n-(t+s_n)) dz_n dz_{n^\perp}\\
&\leq a\sup \rho \int_{B^{n^\perp}(2D)}|{\tfrac{1}{2}\II(z_{n^\perp})}+{a^{-2}q(x,az_{n^\perp})}|  dz_{n^\perp} .
\end{align*}
As $|{\tfrac{1}{2}\II(z_{n^\perp})}+{a^{-2}q(x,az_{n^\perp})}|$ is bounded for $z_{n^\perp}\in T_x^{2D} \partial X$ and $a\in (0,\frac{r}{2D}]$ by Lemma~\ref{boundary}, the same lemma combined with Lebesgue's theorem yields that
\begin{equation*}
\lambda(a)=a\sup \rho  \int_{B^{n^\perp}(2D)}|{\tfrac{1}{2}\II(z_{n^\perp})}+{a^{-2}q(x,az_{n^\perp})}|  dz_{n^\perp} \in o(a).
\end{equation*}

Now suppose  $ \beta_0$ is a regular value for $\theta^H_0(\cdot;s)$ for all $s\in B$.
The function $\frac{d}{dt} \beta(a,t;s)$ from the proof of Lemma~\ref{tHtQ} was continuous in $(a,t,s)$, so there is a neighborhood of the compact set $\{ (0,t^H(0,\beta_0;s),s)\in \R^{2+d} \mid s\in B \}$ on which $\frac{d}{dt} \beta(a,t;s)>0$.  In particular, there are constants $ \delta, \eps, M_1>0$ such that
\begin{equation*}
-\inf\Big\{\frac{d}{dt} \beta(a,t;s) \, \Big|\, a\in [0,\eps],\, s\in B,\, |t-t^H(0,\beta_0;s)|\leq \delta \Big\}=M_1.
\end{equation*}
Thus for $a\in (0,\eps)$ and $t,t+\nu \in [t^H(0,\beta_0;s)-\delta,t^H(0,\beta_0;s)+\delta]$,
\begin{equation*}
\theta^{Q}_a(t+\nu;s) - \theta^{Q}_a(t;s) \leq -M_1 \nu.
\end{equation*}
Hence,  
\begin{equation*}
\theta^{X}_a(t+\nu;s) - \theta^{Q}_a(t;s) \leq \lambda(a)-M_1 \nu.
\end{equation*}
As $\lim_{a\to 0} t^{Q}(a,\beta_0;s)= t^{H}(0,\beta_0;s)$ uniformly for $s \in B$ by Lemma \ref{tHtQ},
\begin{equation*}
|t^{Q}(a,\beta_0;s)-t^H(0,\beta_0;s)|<\tfrac{1}{2}\delta
 \end{equation*}
for all $s\in B$ and $a$ sufficiently small.
Thus, if $\theta^{Q}_a(t;s)=\beta_0$, then $\theta^{X}_a(t+\nu;s)< \beta_0$ for $\tfrac{1}{2}\delta \geq \nu > M_1^{-1}\lambda(a)$. So if $a$ is so small that $\tfrac{1}{2}\delta  > M_1^{-1}\lambda(a)$,
\begin{equation*}
t^{X}(a,\beta_0;s)-t^{Q}(a,\beta_0;s)\leq M_1^{-1}\lambda(a)
\end{equation*}
for all $s\in B$.
The other inequality is similar.
\end{proof}

\begin{theorem}\label{second}
Let $X$ be a closed $r$-regular set and $A\subseteq \R^d$ be bounded measurable. Let $I$ and $J$ be non-empty finite index sets. For $i\in I$ and $j\in J$, let $B_i,W_j\subseteq \R^d$ be non-empty compact strictly convex sets and let $\rho_i,\rho_j$ be continuous PSF's with compact support. Suppose that  $ \beta_i ,\omega_j \in (0,1)$ are regular  values for $\theta_0^{H_{n},\rho_i}(\cdot ;b)$ and $\theta_0^{H_{n},\rho_j}(\cdot;w)$, respectively, for all $n\in S^{d-1}$, $b\in B_i$, and $w\in W_j$. Then 
\begin{align*}
& \int_{\xi^{-1}_{\partial X}(A)} \prod_{i\in I}\mathds{1}_{\big\{\theta_a^{X,\rho_i}(x+aB_i) \subseteq (\beta_i, 1] \big\} }\prod_{j\in J} \mathds{1}_{\big\{\theta_a^{X,\rho_j}(x+aW_j) \subseteq [0,\omega_j]\big\} }dx \\
&= a\int_{ \partial X \cap A} \big(t^{H}_-(0,\beta;B) - t^{H}_+(0,\omega;W) \big)^+ d\Ha^{d-1} \\
&\quad + a^{2}\int_{ \partial X\cap A} \bigg(\tfrac{1}{2}\tr \II \big(t^{H}_-(0,\beta;B)^2- t^{H}_+(0,\omega;W)^2 \big)\mathds{1}_{ \big\{t^{H}_-(0,\beta;B)>t^{H}_+(0,\omega;W)\big\}} \\
&\quad +\big(\min_{i\in I'(n)}\{\psi^{Q,\rho_i}(\beta_i,B_i)\}-\max_{j\in J'(n)}\{\psi^{Q,\rho_j}(\omega_j;W_j\big)\}) \mathds{1}_{ \big\{t^{H}_-(0,\beta;B)>t^{H}_+(0,\omega;W)\big\}}\\
&\quad + \big(\min_{i\in I'(n)}\{\psi^{Q,\rho_i}(\beta_i,B_i)\}-\max_{j\in J'(n)}\{\psi^{Q,\rho_j}(\omega_j;W_j)\}\big)^+\mathds{1}_{ \big\{t^{H}_-(0,\beta;B)=t^{H}_+(0,\omega;W)\big\} }\bigg)d\Ha^{d-1} \\
&\quad +o(a^2).
\end{align*}
\end{theorem}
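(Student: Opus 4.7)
The plan is to extend the proof of Theorem \ref{first1} by retaining the $m=2$ term in the Last decomposition \eqref{lastform} and expanding the inner integrals to second order in $a$. Applying \cite[Theorem 2.1]{last} and substituting $t=au$ gives an inner integral of the form $a^m\int_0^{\delta/a}u^{m-1}f_a^X(x+aun)g_a^X(x+aun)du$, and since $\supp f_a^Xg_a^X\subseteq \partial X\oplus aB(2D)$ the integrand is supported in $|u|\leq 2D$. Hence the terms with $m\geq 3$ contribute $o(a^2)$ and can be discarded; only $m=1$ and $m=2$ matter.

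By Lemma \ref{decrease}, $\theta_a^X(\cdot;s)$ is decreasing on $[-a^{-1}r,a^{-1}r]$ for small $a$, so the integrand equals $\mathds{1}_{\{t_+^X<u<t_-^X\}}$, with $t_-^X=\min_{i\in I}t^X(a,\beta_i;B_i)$ and $t_+^X=\max_{j\in J}t^X(a,\omega_j;W_j)$. The inner integral therefore reduces to $\tfrac{1}{m}((t_-^X)^m-(t_+^X)^m)\mathds{1}_{\{t_-^X>t_+^X\}}$. Lemmas \ref{tHtQ} and \ref{tXtQ} combined yield, uniformly in $b\in B_i$,
\begin{equation*}
t^X(a,\beta_i;b)=t^H(0,\beta_i;b)+a\psi^{Q,\rho_i}(\beta_i;b)+o(a).
\end{equation*}
Strict convexity of $B_i$ ensures that $t^H(0,\beta_i;\,\cdot\,)$ attains its infimum over $B_i$ at a unique point $b_i^*(n)$, so pushing $\min_{b\in B_i}$ and then $\min_{i\in I}$ through this expansion gives
\begin{equation*}
t_-^X=t_-^H(0,\beta;B)+a\min_{i\in I'(n)}\psi^{Q,\rho_i}(\beta_i;B_i)+o(a),
\end{equation*}
with $I'(n)$ the set of indices attaining the leading minimum. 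The analogous expansion holds for $t_+^X$ with $J'(n)$.

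Substituting these expansions into $\tfrac{1}{m}((t_-^X)^m-(t_+^X)^m)\mathds{1}_{\{t_-^X>t_+^X\}}$ and distinguishing three cases recovers the stated integrand: when $t_-^H>t_+^H$ the indicator equals $1$ for small $a$, so the $m=1$ piece contributes $a(t_-^H-t_+^H)+a^2(\min_{I'(n)}\psi-\max_{J'(n)}\psi)$ while the $m=2$ piece contributes $\tfrac{1}{2}a^2((t_-^H)^2-(t_+^H)^2)$ to leading order; when $t_-^H=t_+^H$ the indicator survives only when the first-order correction is positive, yielding the $(\min\psi-\max\psi)^+$ term; when $t_-^H<t_+^H$ everything vanishes for small $a$. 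Finally one identifies $\kappa_1\mu_{d-1}$ with $\Ha^{d-1}$ on outward normals via \cite[Equation~(8)]{rataj}, and $2\kappa_2\mu_{d-2}$ with $\tr\II\cdot d\Ha^{d-1}$ via the standard theory of curvature measures for sets of positive reach. The principal obstacle is the non-smoothness of the positive-part function at $t_-^H=t_+^H$, which is exactly what dictates the three-case decomposition in the statement; a secondary issue is the uniform control of the $o(a)$ remainder when commuting $\min/\max$ with the expansion, for which the strict convexity of the $B_i$ and $W_j$ (ensuring unique, continuously varying optimisers $b_i^*(n)$, $w_j^*(n)$) is essential.
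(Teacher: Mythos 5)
Your proposal is correct and follows essentially the same route as the paper's proof: the Weyl tube formula decomposition with only the $m=1,2$ terms surviving at order $a^2$, monotonicity of $\theta_a^X(\cdot;s)$ from Lemma \ref{decrease} to reduce the inner integrals to $\tfrac{1}{m}((t_-^X)^m-(t_+^X)^m)^+$-type expressions, the two-step comparison $X\to Q\to H$ via Lemmas \ref{tXtQ} and \ref{tHtQ}, and the strict-convexity argument for passing the $\min$/$\max$ over $B_i$, $W_j$ and over $I$, $J$ through the first-order expansion (which the paper carries out via the convergence $b_i(a)\to b_i$ of the optimisers and continuity of $\tfrac{d}{da}t^Q$). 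The two points you flag as needing care — the case split at $t_-^H=t_+^H$ and the uniform remainder control when commuting the optimisation with the expansion — are exactly where the paper spends its effort, together with the uniform bound \eqref{intlim} needed to justify dominated convergence in the outer integral.
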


 The following notation is used in the theorem and its proof:
\begin{align*}
t^X_-(a,\beta;B)&=\min_{i\in I} \{t^X_-(a,\beta_i;B_i)\}\\
t^X_+(a,\omega;W)&=\max_{j\in J} \{t^X_+(a,\omega_j;W_j)\}.
\end{align*}
Moreover, $I',J'$ are the index sets
\begin{align*}
I'(n)&=\{i_0 \in I \mid t^{H_{n}}_-(0,\beta;B)=t^{H_{n}}_-(0,\beta_{i_0};B_{i_0})\}\\
J'(n)&=\{j_0\in J \mid t^{H_{n}}_+(0,\omega;W)=t^{H_{n}}_+(0,\omega_{j_0};W_{j_0})\}
\end{align*}
and 
\begin{align*}
\psi^{Q(x),\rho_i}(\beta_i;B_i)&=\psi^{Q(x),\rho_i}(\beta_i;b_i(n))\\
\psi^{Q(x),\rho_j}(\omega_j;W_j)&=\psi^{Q(x),\rho_j}(\omega_j;w_j(n))
\end{align*}
where $b_i(n)\in B_i$ and $w_j(n)\in W_j$ are unique with $h(B_i,n)=\langle b_i(n),n \rangle$ and $h(\check{W}_j,n)=-\langle w_j(n),n \rangle$, respectively.
 
\begin{proof}
For an $r$-regular set $X$, the formula \eqref{lastform} simplifies for $2aD<r$ to the Weyl tube formula
\begin{align*}
\MoveEqLeft\int_{\R^d}\mathds{1}_{\xi^{-1}_{\partial X}(A)} f_a^X(x)g_a^X(x)dx\\
& =\sum_{m=1}^d  \int_{\partial X \cap A} \int_{-r}^{r} t^{m-1} f_a^X(x+tn)g_a^X(x+tn) dt s_{m-1}(k) \Ha^{d-1}(dx)
\end{align*}
where $D$ is chosen as in the the proof of Theorem \ref{first1} and $s_{m}(k)$ denotes the $m$th symmetric polynomial in the principal curvatures.
Again
\begin{align*}
a^{-2}\int_{-r}^{r} t^{m-1} f_a^X(x+tn)g_a^X(x+tn) dt \leq m^{-1}a^{m-2}(2D)^m
\end{align*}
and hence Lebesgue's theorem yields
\begin{align*}
\lim_{a\to 0}a^{-2}\sum_{m=2}^d \int_{\partial X} \int_{-r}^{r} t^{m-1} f_a^X(x+tn)g_a^X(x+tn) dt s_{k-1}(k) \Ha^{d-1}(dx)\\
=  \int_{\partial X } \bigg( \lim_{a\to 0} \int_{-2D}^{2D} t f_a^X(x+atn)g_a^X(x+atn) dt \bigg)s_1(k) \Ha^{d-1}(dx)
\end{align*}
if the limit of the inner integral exists.
The $m=1$ term will be treated separately.

Again we get the inequalities \eqref{ineq2} for all $a$ small enough 
and thus
\begin{align}\nonumber
\MoveEqLeft \tfrac{1}{2}((t^{B_{in}}_-(a,\beta;B)^+)^2-(t^{\R^d\backslash B_{out}}_+(a,\omega;W)^+)^2 )\mathds{1}_{\big\{t^{B_{in}}_-(a,\beta;B)>t^{\R^d\backslash B_{out}}_+(a,\omega;W)\big\}}\\ \nonumber
& = \int_0^{2D} t f_a^{\R^d\backslash B_{out}}(x+atn)g_a^{B_{in}}(x+atn)  dt\\ \label{ulig}
& \leq \int_0^{2D} t f_a^X(x+atn)g_a^X(x+atn) dt \\ \nonumber
& \leq \int_0^{2D} t f_a^{B_{in}}(x+atn)g_a^{\R^d\backslash B_{out}}(x+atn) dt \\\nonumber
&  = \tfrac{1}{2}((t^{\R^d\backslash B_{out}}_-(a,\beta;B)^+)^2- (t^{B_{in}}_+(a,\omega;W)^+)^2)\mathds{1}_{\big \{t^{\R^d\backslash B_{out}}_-(a,\beta;B)> t^{B_{in}}_+(a,\omega;W)\big\}}
\end{align}
so \eqref{unift} forces the middle integral to converge to
\begin{align*}
\tfrac{1}{2}((t^{H}_-(0,\beta;B)^+)^2-(t^{H}_+(0,\omega;W)^+)^2)\mathds{1}_{\{t^{H}_-(0,\beta;B)>t^{H}_+(0,\omega;W)\}}.
\end{align*}
The integration over $[-2D,0]$ is similar, except the inequalities in \eqref{ulig} are switched.

It remains to determine the asymptotics of
\begin{equation}\label{1stterm}
a^{-1} \int_{\partial X\cap A} \int_{-2D}^{2D} \big(f^{X}_a (x+atn) g^{X}_a(x+atn)dt - (t^H_-(0,\beta;B)-t^H_+(0,\omega;W))^+\big)\Ha^{d-1}(dx).
\end{equation}
The proof of Theorem \ref{first1}  yields $M,\eps>0$ depending only on $r$, $\rho$, and $D$ such that
\begin{align*} 
( t^H_-(0,\beta,B) -t^H_+(0,\omega;W)-2Ma)^+
& \leq \int_{-D}^{D} f^{X}_a (x+atn) g^{X}_a(x+atn)dt\\
 & \leq   (t^{H}_-(0,\beta;B)-t_+^{H}(0,\omega;W)+2Ma)^+
\end{align*}
for all $a<\eps$. Hence
\begin{equation}\label{intlim}
a^{-1}\bigg(\int_{-2D}^{2D} f^{X}_a (x+atn) g^{X}_a(x+atn)dt- (t^H_-(0,\beta;B)-t_+^H(0,\omega;W))^+\bigg)\leq 2M,
\end{equation}
allowing us to apply Lebesgue's theorem to \eqref{1stterm}. Since $\theta_a^X(\cdot;s)$ is decreasing by Lemma \ref{decrease},
\begin{equation*}
\int_{-2D}^{2D} f^{X}_a (x+atn) g^{X}_a(x+atn)dt = ( t^X_-(a,\beta;B)-t^X_+(a,\omega;W))^+.
\end{equation*}

By Lemma \ref{tXtQ},
\begin{align*}
\MoveEqLeft \big|\big( t^X_-(a,\beta_i ;B)-t^X_+(a,\omega_j;W)\big)^+ - \big( t^{Q}_-(a,\beta_i;B)-t^{Q}_+(a,\omega_j;W)\big)^+\big|\\
\leq &
\big|t^{Q}_-(a,\beta_i;B)- t^{X}_-(a,\beta_i;B)\big| +  \big|t^{Q}_+(a,\omega_j; W) -t^{X}_+(a,\omega_j;W)\big|\\
\leq &\max_{i\in I}\sup_{b\in B_i}\{| t^{Q}(a,\beta_i;b)- t^{X}(a,\beta_i;b)|\}+\max_{j\in J} \sup_{w\in W_j} \{|t^{Q}(a,\omega_j;w)
-t^{X}(a,\omega_j;w)|\}\\
\leq &2M\lambda(a).
\end{align*}
Hence the limit of \eqref{intlim} equals the limit of 
\begin{align}\label{lim?}
\MoveEqLeft a^{-1}(( t^{Q}_-(a,\beta;B)- t^{Q}_+(a,\omega;W))^+ -( t_-^{H}(0,\beta;B)-t^{H}_+(0,\omega;W))^+ ).
\end{align}
The last part of Lemma \ref{tHtQ} yields that
\begin{align*}
\MoveEqLeft| | t^{Q}_-(a,\beta;B)-t^{Q}_+(a,\omega;W)|-| t^{H}_-(0,\beta;B)-t^{H}_+(0,\omega;W)||\\
&\leq |t^{Q}_-(a,\beta;B)-t^{H}_-(0,\beta;B)|+|t^{Q}_+(a,\omega;W)-t^{H}_+(0,\omega;W)|\\
&\leq 2Ma
\end{align*}
so that \eqref{lim?} equals
\begin{align*}
&a^{-1}\Big((t^{Q}_-(a,\beta;B)- t^{H}_-(0,\beta;B)-(t^{Q}_+(a,\omega;W) -t^{H}_+(0,\omega;W)))\mathds{1}_{ \{t^{H}_-(0,\beta;B)>t^{H}_+(0,\omega;W)\}}\\
&+(t^{Q}_-(a,\beta;B)- t^{H}_-(0,\beta;B)-(t^{Q}_+(a,\omega;W) -t^{H}_+(0,\omega;W)))^+\mathds{1}_{ \{t^{H}_-(0,\beta;B)=t^{H}_+(0,\omega;W)\}}\Big)
\end{align*}
for sufficiently small $a$.

As $B_i$ is strictly convex, there is a unique $b_i\in B_i$ with $h(B_i,n)=\langle b_i,n\rangle$. In particular,
\begin{equation}\label{th}
\min_{i\in I} \inf_{b\in B_i}\{ t^{H}(0,\beta_i;b)\}= \min_{i\in I} t^{H}(0,\beta_i;b_i)=t^{H}(0,\beta_{i_0};b_{i_0})
\end{equation}
for all $i_0\in I'(n)$.
 
Since $b\mapsto t^{Q}(a,\beta_i;b)$ is continuous and $B_i$ is compact, there is a  $b_i(a)\in B_i$ for every $a$ such that
\begin{equation*}
\inf_{b\in B_i}\{ t^{Q}(a,\beta_i;b)\}=t^{Q}(a,\beta_i;b_i(a)).
\end{equation*}
On the other hand, Lemma \ref{tHtQ} yields an $M>0$ such that for all $b\in B_i$,
\begin{equation*}
|t^{Q}(a,\beta_i;b)-t^{H}(0,\beta_i;b)|\leq Ma.
\end{equation*}
Thus $t^{Q}(a,\beta_i;b_i(a))\leq t^{Q}(a,\beta_i;b_i)$ implies that
\begin{equation*}
0\leq t^{H}(0,\beta_i;b_i(a)) - t^{H}(0,\beta_i;b_i) =  \langle b_i, n \rangle - \langle b_i(a), n \rangle \leq 2Ma.
\end{equation*}
Strict convexity and compactness of $B_i$ thus implies that $\lim_{a\to 0} b_i(a)= b_i$ and again continuity yields
\begin{equation*}
\lim_{a\to 0}t^{Q}_-(a;\beta_i;B_i)=\lim_{a\to 0}t^{Q}(a,\beta_i;b_i(a)) = t^{Q}(0,\beta_i;b_i)=t^H_-(0,\beta_i;B_i).
\end{equation*}
Using \eqref{th}, this yields
\begin{equation*}
t^{Q}_-(a,\beta_i;B) - t^H_-(0,\beta_i;B)= \min_{i\in I'(n)}\{ t^{Q}_-(a,\beta_i;B_i) - t^H_-(0,\beta_i;B_i)\}
\end{equation*}
for $a$ sufficiently small.

On the other hand, Lemma \ref{tHtQ} shows that there  are $a',a''\in [0,a]$ such that
\begin{align*}
t^{Q}(a,\beta_i;b_i)-t^{H}(0,\beta_i;b_i)&=a \frac{d}{da}t^{Q}(a,\beta_i;b_i)_{\mid a=a'}\\
t^{Q}(a,\beta_i;b_i(a))-t^{H}(0,\beta_i;b_i(a))&=a\frac{d}{da}t^{Q}(a,\beta_i;b_i(a))_{\mid a=a''}.
\end{align*}
Subtracting these equations and using $t^{H}(0,\beta_i;b_i)\leq t^{H}(0,\beta_i;b_i(a))$ yields
\begin{align*}
0&\leq a^{-1}(t^{Q}(a,\beta_i,b_i)- t^{Q}(a,\beta_i;b_i(a))) \\
&\leq \frac{d}{da}t^{Q}(a,\beta_i;b_i)_{\mid a=a'}-\frac{d}{da}t^{Q}(a,\beta_i;b_i(a))_{\mid a=a''}.
\end{align*}
The right hand side goes to zero for $a\to 0$ by continuity of $(a,b)\mapsto \frac{d}{da}t^{Q}(a,\beta_i;b)$, so 
\begin{align*}
\lim_{a\to 0} a^{-1}(t^{Q}_-(a,\beta_i;B) - t^H_-(0,\beta_i;B)) & =\min_{i\in I'(n)}\{ \lim_{a\to 0} a^{-1}(t^{Q}_-(a,\beta_i;B_i) - t^H_-(0,\beta_i;B_i))\}\\
&=\min_{i\in I'(n)}\{ \lim_{a\to 0} a^{-1}(t^{Q}(a,\beta_i;b_i(a)) - t^H(0,\beta_i ;b_i))\}\\
&= \min_{i\in I'(n)}\{ \lim_{a\to 0} a^{-1}(t^{Q}(a,\beta_i;b_i) - t^H(0,\beta_i ;b_i))\}\\
&=\min_{i\in I'(n)}\{ \psi^{Q,\rho_i}(\beta_i;b_i)\}.
\end{align*}
The $W$ terms in \eqref{lim?} are handled similarly, completing the proof.
\end{proof}

%
%

The next theorem is a modification intended for estimators of the type \eqref{n1}. For $n\in S^{d-1}$, let $\nu_n$ be the signed measure
\begin{equation*}
\nu_n = \nu_n^1-\nu_n^2
\end{equation*}
where $\nu_n^1$ is the Lebesgue--Stieltjes measure on the interval $(0,1)$ defined by the function $\beta \mapsto -\frac{1}{2}(\varphi^\rho(\beta,n)^+)^2$ and $\nu_n^2$  the Lebesgue--Stieltjes measure defined by the function $\beta \mapsto \frac{1}{2}(\varphi^\rho(\beta,n)^-)^2$.

\begin{theorem}\label{fest}
Let $X$ be a compact $r$-regular set.
Let $\rho $ be continuous with compact support such that all $\beta \in (0,1)$ are regular values for $\theta^{H_{n}}_0(\cdot;0)$ for all $n\in S^{d-1}$. Let $f:[0,1]\to \R$ have $\supp f \subseteq [\beta, \omega]$ for some $\beta,\omega \in (0,1)$ and suppose $f$ is $C^1$ on $(\beta,\omega)$ with $f'$ bounded and that $f_+(\beta)=\lim_{x\to \beta^+}f(x)$ and $f_-(\omega)=\lim_{x\to \omega^-}f(x)$ exist. Then 
\begin{align*}
\MoveEqLeft \int_{\R^d} f\circ \theta_a^X d\Ha^d = a\int_{(0,1)} f d\mu_X\\
 &+ a^2\int_{\partial X}\bigg(\tr \II \int_{(0,1)}f d\nu_n - \frac{1}{2} \int_\R f'(\theta_0^{H_{n}}(t;0))\int_{n^\perp} \II(z)\rho(z-tn)dz dt \\
&\quad +  f_+(\beta)\psi^{Q}(\beta;0)-f_-(\omega)\psi^{Q}(\omega;0)\bigg) d\Ha^{d-1}\\
&+o(a^2). 
\end{align*}
\end{theorem}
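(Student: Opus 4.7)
The plan is to reduce Theorem \ref{fest} to Theorem \ref{second} applied to indicator functions of super-level sets of $\theta_a^X$, which are then weighted against $f'$. Since $\supp f\subseteq[\beta,\omega]\subseteq(0,1)$ and $f$ is $C^1$ on $(\beta,\omega)$ with one-sided limits $f_+(\beta),f_-(\omega)$, the fundamental theorem of calculus yields the pointwise identity
\begin{equation*}
f(y)=f_+(\beta)\mathds{1}_{(\beta,\omega)}(y)+\int_\beta^\omega f'(u)\mathds{1}_{(u,\omega)}(y)\,du\qquad\text{for }y\in\R\setminus\{\beta,\omega\}.
\end{equation*}
Substituting $y=\theta_a^X(x)$ (using that $\{\theta_a^X\in\{\beta,\omega\}\}$ is $\Ha^d$-null, as follows from the regularity assumption and Lemma \ref{decrease}) and applying Fubini gives
\begin{equation*}
\int_{\R^d}f\circ\theta_a^X\,d\Ha^d=f_+(\beta)\Ha^d(\{\beta<\theta_a^X<\omega\})+\int_\beta^\omega f'(u)\Ha^d(\{u<\theta_a^X<\omega\})\,du.
\end{equation*}

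I then apply Theorem \ref{second} with $I=J=\{0\}$, $B_0=W_0=\{0\}$, $\rho_0=\rho$, $\beta_0=u$, and $\omega_0=\omega$. Because every value in $(0,1)$ is regular, $u\mapsto\varphi^\rho(u,n)$ is strictly decreasing, so $\varphi^\rho(u,n)>\varphi^\rho(\omega,n)$ for $u<\omega$ and the indicators in Theorem \ref{second} are identically $1$. The theorem produces
\begin{align*}
\Ha^d(\{u<\theta_a^X<\omega\})={}&a\int_{\partial X}(\varphi^\rho(u,n)-\varphi^\rho(\omega,n))\,d\Ha^{d-1}\\
&+a^2\int_{\partial X}\Big[\tfrac{1}{2}\tr\II\cdot(\varphi^\rho(u,n)^2-\varphi^\rho(\omega,n)^2)+\psi^{Q}(u;0)-\psi^{Q}(\omega;0)\Big]d\Ha^{d-1}\\
&+r(a,u),
\end{align*}
where, crucially, the remainder $r(a,u)$ is $o(a^2)$ uniformly in $u\in[\beta,\omega]$.

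Combining the two displays, swapping the integrations over $u$ and over $\partial X$, and working pointwise in $n\in S^{d-1}$, I simplify the three kinds of contributions. The first-order $a$-term reduces to $\int_{(0,1)}f\,d\mu_n$ by integration by parts using $d\mu_n=-d\varphi^\rho(\cdot,n)$, producing the stated $a\int_{(0,1)}f\,d\mu^X$. The $\tr\II$-subterm of the $a^2$-coefficient is handled by expanding the square and integrating by parts against $\varphi^\rho(u,n)^2$: the boundary contributions cancel, leaving $-\tr\II\int_\beta^\omega f(u)\varphi^\rho(u,n)(\varphi^\rho)'(u,n)\,du$, which by definition of $\nu_n=\nu_n^1-\nu_n^2$ equals $\tr\II\int_{(0,1)}f\,d\nu_n$ (one checks $d\nu_n=-\varphi^\rho(\varphi^\rho)'\,du$ regardless of the sign of $\varphi^\rho$). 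The $\psi^{Q}$-subterm contributes the endpoint values $f_+(\beta)\psi^{Q}(\beta;0)-f_-(\omega)\psi^{Q}(\omega;0)$ together with the residual integral $\int_\beta^\omega f'(u)\psi^{Q}(u;0)\,du$. Substituting $t=\varphi^\rho(u,n)=t^H(0,u;0)$, so that $u=\theta_0^H(t;0)$ and $du=-\psi_2(t;0)\,dt$, and using $\psi^{Q}(u;0)=\psi_1(t;0)/\psi_2(t;0)$, this integral rewrites as $-\tfrac{1}{2}\int_\R f'(\theta_0^H(t;0))\int_{n^\perp}\II(z)\rho(z-tn)\,dz\,dt$, where the range extends to all of $\R$ because $f'$ vanishes outside $(\beta,\omega)$. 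Assembling the three contributions yields the claimed formula.

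The delicate point is the uniformity of the $o(a^2)$ remainder in $u\in[\beta,\omega]$, which is what legitimises $\int_\beta^\omega f'(u)r(a,u)\,du=o(a^2)$. This requires revisiting the proof of Theorem \ref{second} and verifying that each invocation of Lemmas \ref{tHtQ}--\ref{tXtQ}, of the implicit function theorem, and of Lebesgue dominated convergence remains valid with estimates uniform in $u\in[\beta,\omega]$. The blanket assumption that every value in $(0,1)$ is a regular value of $\theta^H_0(\cdot;0)$ keeps $\psi_2(t^H(0,u;0);0)$ bounded away from $0$ on the compact set $[\beta,\omega]$ by continuity and compactness, furnishing the needed uniformity.
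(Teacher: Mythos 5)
Your argument is correct in substance but takes a genuinely different route from the paper. The paper proves Theorem \ref{fest} directly: it starts from the Weyl tube formula, handles the $m=2$ term by showing weak convergence of the empirical measures $\nu_{n,a}^1$ to $\nu_n^1$, and analyses the $m=1$ term in the normal coordinate $t$ by comparing $\theta_a^X$ with $\theta_a^Q$ and $\theta_0^{H}$ and splitting $(-D,D)$ into the region where both values lie in $(\beta,\omega)$ (yielding the $f'$-integral) and the boundary slivers near $t^H(0,\beta;0)$ and $t^H(0,\omega;0)$ (yielding the $f_\pm\psi^Q$ endpoint terms). You instead work in the grey-value variable $u$: the layer-cake identity $f(y)=f_+(\beta)\mathds{1}_{(\beta,\omega)}(y)+\int_\beta^\omega f'(u)\mathds{1}_{(u,\omega)}(y)\,du$, Fubini, and Theorem \ref{second} with $B_0=W_0=\{0\}$ (a singleton is trivially strictly convex, so the hypotheses are met), followed by integration by parts in $u$ and the substitution $u=\theta_0^{H}(t;0)$. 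I checked the bookkeeping: the first-order term, the $\tr\II$ term, and the $\psi^Q$ terms all reassemble correctly into $\int f\,d\mu_n$, $\int f\,d\nu_n$, and the endpoint-plus-$f'$-integral expression of the theorem. Your route has the appeal of deriving Theorem \ref{fest} as a genuine corollary of Theorem \ref{second} rather than reproving it; the paper's route avoids any uniformity-in-the-threshold issue by never integrating over the threshold.

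The one point that is asserted rather than proved is the step $\int_\beta^\omega f'(u)r(a,u)\,du=o(a^2)$. You do not in fact need full uniformity of the $o(a^2)$; it suffices to have a bound $|a^{-2}r(a,u)|\le M$ uniform in $u\in[\beta,\omega]$ together with the pointwise convergence already supplied by Theorem \ref{second}, and then apply dominated convergence in $u$ (and jointly in $(x,u)$ over $\partial X\times[\beta,\omega]$, since the remainder is itself an integral over $\partial X$). The required domination is available: the constants in \eqref{unift} and \eqref{intlim} depend only on $r$, $\rho$ and $D$ and not on the threshold, and the constants entering Lemmas \ref{tHtQ} and \ref{tXtQ} can be taken uniform over $u\in[\beta,\omega]$ because, as you note, $\psi_2(t^H(0,u;0);0)$ is bounded away from $0$ on this compact set of regular values. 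So the gap is fillable, but as written it is the one step of your proof that still has to be carried out in detail, and it amounts to re-examining the proof of Theorem \ref{second} with the threshold as an additional parameter — roughly the same work the paper's direct proof performs.
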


\begin{proof}
Since $|f \circ \theta_a^X | \leq M\mathds{1}_{ \partial X \oplus aB(D)}$  for some $M>0$ if $\supp \rho \subseteq B(D)$, we still have the formula
\begin{align*}
a^{-2}\int_{\R^d} f\circ \theta_a^X(z) dz
=a^{-2}\sum_{m=1}^d  \int_{\partial X } \int_{-aD}^{aD} t^{m-1} f\circ \theta_a^X (x+tn) dt s_{m-1}(k) \Ha^{d-1}(dx)
\end{align*}
and the same arguments as in the proof of Theorem \ref{second} show that Lebesgue's theorem can be applied to determine the limit of terms with $m\geq 2$ and that all terms with $m\geq 3$ vanish asymptotically.

For a Borel set $A\subseteq (0,1)$, let
\begin{equation*}
\nu_{n,a}^1(A)= \int_{0}^D t \mathds{1}_{A\cap (\beta,\omega)}( \theta_a^X(x+atn) ) dt.
\end{equation*}
This defines a measure concentrated on $(\beta,\omega)$ where it coincides with the Lebesgue-Stieltjes measure determined by the function $\alpha \mapsto -(t^{X}(a,\alpha;0)^+)^2$.
It follows from the proof of Theorem \ref{second} and the same arguments as in the proof of Corollary \ref{mukonv} that $\nu_{n,a}^1$ converges weakly to $\nu_n^1$ and hence
\begin{equation*}
\lim_{a\to 0} \int_{0}^D t f \circ \theta_a^X(x+atn) dt= \lim_{a\to 0} \int_{(0,1)} f d\nu_{n,a}^1 =\int_{(0,1)} f d\nu_n^1.
\end{equation*}
The integration over $[-r,0]$ is handled similarly, showing that  
\begin{equation*}
\lim_{a\to 0} \int_{-D}^D t f \circ \theta_a^X(x+atn) dt=\int_{(0,1)} f d\nu_n.
\end{equation*}

It remains to consider the $m=1$ term.
The assumptions on  $\varphi(\cdot, n)^{-1}= \theta^{H}_0(\cdot;0)$ ensures
\begin{equation*}
\int_{(0,1)} f d\mu_n = \int_{0}^1 f(\beta)  \frac{d}{d\beta} \varphi(\beta,n) d\beta = \int_{-D}^D f \circ \theta^{H}_0(t;0) dt.
\end{equation*} 
Thus we must determine the limit of
\begin{equation*}
a^{-1} \int_{\partial X}\int_{-D}^D( f \circ \theta^X_a(t;0)-f \circ \theta^{H}_0(t;0)) dt d\Ha^{d-1}.
\end{equation*} 
Note that
\begin{equation*}
 |f\circ \theta_a^X(t;0)-f\circ \theta_0^H(t;0)| \leq  \sup|f'|| \theta_a^X(t;0)- \theta_0^H(t;0)|\mathds{1}_{A_1} +  \sup |f| \mathds{1}_{\R \backslash A_1 \cup A_2}
\end{equation*}
where
\begin{align*}
A_1&=\{t\in (-D,D) \mid \theta^X_a(t;0),\theta^H_0(t;0)\in (\beta,\omega)\}\\
A_2&=\{ t\in (-D,D) \mid \theta^X_a(t;0),\theta_0^H(t;0)\notin (\beta,\omega) \}.
\end{align*}
By the proof of Theorem \ref{first1},
\begin{align*}
\Ha^{1}(\R \backslash A_1 \cup A_2)={}& |t^X(a,\beta;0)-t^H(0,\beta;0)|+|t^X(a,\omega;0)-t^H(0,\omega;0)|\\
\leq {} &M_1 a
\end{align*}
where $M_1$ can be chosen independently of $x$ by $r$-regularity. Moreover,
\begin{align*}
|\theta_a^X(t;0)- \theta_0^H(t;0)| &\leq \max\{| \theta_a^{B_{in}}(t;0)- \theta_0^H(t;0)|,| \theta_a^{\R^d \backslash B_{out}}(t;0)- \theta_0^H(t;0)|\}\\
 &\leq a^{-d}\sup \rho \int_{aB^{d-1}(D)} (r - \sqrt{r^2-|z|^2})dz \\
 &\leq M_2 a
\end{align*}
where $M_2$ is again uniform by $r$-regularity. Hence
\begin{equation*}
a^{-1} \bigg|\int_{-D}^D (f\circ \theta_a^X(t;0)-f\circ \theta_0^H(t;0)) dt \bigg|\leq 2D \sup |f'|M_2 + \sup |f| M_1
\end{equation*}
and thus we can apply Lebesgue's theorem to the $m=1$ term as well if only we can determine the limit of
\begin{equation}\label{Xint}
a^{-1} \int_{-D}^D (f\circ \theta_a^X(t;0)-f\circ \theta_0^H(t;0)) dt .
\end{equation}

By Lemma \ref{tXtQ},
\begin{align*}
\MoveEqLeft a^{-1}\int_{-D}^D | f\circ \theta_a^X(t;0) -  f\circ \theta_a^Q(t;0) | dt \\ 
& \leq a^{-1}\int_{-D}^D (\sup |f'| |\theta_a^X(t;0) -  \theta_a^Q(t;0)|\mathds{1}_{B_1} + \sup |f|  \mathds{1}_{ \R \backslash (B_1\cup B_2)}) dt\\
&\leq 2DM_3a^{-1}\lambda(a)  + a^{-1}\sup |f| \Ha^1( \R \backslash (B_1\cup B_2))\\
&\leq 2DM_3a^{-1}\lambda(a)  + M_4 a^{-1}\lambda(a).
\end{align*}
for some $M_3,M_4>0$ where
\begin{align*}
B_1&=\{t\in (-D,D) \mid \theta^X_a(t;0),\theta^Q_a(t;0)\in (\beta,\omega)\}\\
B_2&=\{ t\in (-D,D) \mid \theta^X_a(t;0),\theta_a^Q(t;0)\notin (\beta,\omega) \}.
\end{align*}
Hence the limit of \eqref{Xint} equals the limit of
\begin{equation*}
 a^{-1} \int_{-D}^D (f\circ \theta_a^Q(t;0)  - f\circ \theta_0^H(t;0) ) dt.
\end{equation*}

As in the proof of Lemma \ref{tHtQ}, $\theta_a^Q(t;0)$ extends to a $C^1$ function $ \beta(a,t;0)$ for $(a,t)\in \R^2$.
Introduce the following sets:
\begin{align*}
C_1(a)&=\{t\in (-D,D) \mid \beta(a,t;0),\beta(0,t;0)\in (\beta,\omega)\}\\
C_\beta^1(a)&=\{t\in (-D,D) \mid  \beta(0,t;0)\leq \beta < \beta(a,t;0) \}\\
C_\beta^2(a)&=\{t\in (-D,D) \mid  \beta(a,t;0)\leq \beta < \beta(0,t;0) \}.
\end{align*}
First consider
\begin{equation*}
|f \circ \beta(a,t;0) - f \circ \beta(0,t;0)|\mathds{1}_{C_1(a)} \leq  a\sup{|f'|}\sup_{(a,t)\in B }\Big\{\frac{d}{da}\beta (a,t;0)  \Big\} \mathds{1}_{C_1(a)}
\end{equation*}
where $B$ is a compact neighborhood of $\{0\}\times \varphi([\omega,\beta],n)$.
It follows that 
\begin{align*}
\MoveEqLeft \lim_{a\to 0} a^{-1} \int_{C_1(a)}( f \circ \theta_a^Q(t;0) - f\circ \theta_0^H(t;0)) dt \\ 
& = \int_{\R} \lim_{a\to 0} \mathds{1}_{C_1(a)}a^{-1} (f\circ \beta(a,t;0)  -  f\circ \beta(0,t;0)) dt \\
&=\int_{\R} \mathds{1}_{C_1(0)} \frac{d}{da}f \circ \beta(a,t;0)_{\mid a=0} dt\\
&=-\frac{1}{2}\int_{\varphi((\beta,\omega),n)} f'(\theta^H_0(t;0) ) \int_{n^\perp} \II(z)\rho(z-tn) dz dt. 
\end{align*}


Next $C^1_\beta(a) \cup C^2_\beta(a)$ is the interval
\begin{equation*} [t^H(0,\beta;0)\mathds{1}_{C_\beta^1(a)}+t^Q(a,\beta;0)\mathds{1}_{C_\beta^2(a)},t^H(0,\beta;0)\mathds{1}_{C_\beta^2(a)}+t^Q(a,\beta;0)\mathds{1}_{C_\beta^1(a)}]
\end{equation*}
and 
\begin{equation*}
\mathds{1}_{C^1_\beta(a) \cup C^2_\beta(a)} (f \circ \beta(a,t;0) - f \circ \beta(0,t;0))=f \circ \beta(a,t;0)\mathds{1}_{C_\beta^1(a)} - f \circ \beta(0,t;0)\mathds{1}_{C_\beta^2(a)}.  
\end{equation*}

Let $\eps >0$ and $t \in C_\beta^1(a)$. Then
\begin{equation*}
|t-t^H(0,\beta;0)| \leq |t^H(0,\beta;0)-t^Q(a,\beta;0)| \leq M_4a
\end{equation*}
and hence
\begin{equation*}
|f \circ \beta(a,t;0) - f \circ \beta(0,t;0) - f_+(\beta)| =|  f \circ \beta(a,t;0)-f_+(\beta) |\leq \frac{\eps}{M_4}  
\end{equation*}
for $a$ sufficiently small, 
since $\beta$ is continuous. A similar argument for $t \in C_\beta^2(a)$ yields
\begin{align*}
\MoveEqLeft a^{-1}\bigg|\int_{C^1_\beta(a)\cup C^2_\beta(a)}( f \circ \beta(a,t;0) - f \circ \beta(0,t;0) - f_+(\beta) (\mathds{1}_{C_\beta^1(a)}(t)-\mathds{1}_{C_\beta^2(a)}(t)))dt\bigg|\\
&=a^{-1}\bigg|\int_{t^H(0,\beta;0)}^{t^Q(a,\beta;0)} (f \circ \beta(a,t;0) + f \circ \beta(0,t;0) - f_+(\beta) )dt\bigg| \\
&\leq a^{-1}\bigg| \int_{t^H(0,\beta;0)}^{t^Q(a,\beta;0)} \frac{\eps}{M_4} dt\bigg| \\
&\leq \eps
\end{align*}
for all $a$ sufficiently small.
It follows that
\begin{align*}
\MoveEqLeft \lim_{a\to 0} a^{-1}\int_{C_1(a)\cup C_2(a)} (f \circ \beta(a,t;0) - f \circ \beta(0,t;0)) dt = 
\lim_{a\to 0}a^{-1}\int_{t^H(0,\beta;0)}^{t^Q(a,\beta;0)} f_+(\beta) dt \\
&=\lim_{a\to 0}a^{-1}(t^Q(a,\beta;0)-t^H(0,\beta;0)) f_+(\beta) \\
&= \psi^Q(\beta;0)f_+(\beta).
\end{align*}
The $\omega$ terms are handled similarly.
\end{proof}

\section{Applications of the second order formula}\label{app2}
\subsection{Thresholding}\label{thri2}
In the case where a grey-scale image is thresholded at level $\beta$, Theorem \ref{second} reduces to:
\begin{corollary}
Let $(B_l,W_l)$ be a configuration. 
Let $X$ and $\rho$ be as in Theorem \ref{second} and let $\beta \in (0,1)$ be a regular value for $\theta_0^{H_{n}}(\cdot ;c)$ for all $c\in B_l\cup W_l$ and $n\in S^{d-1}$. Then 
\begin{align*}
\MoveEqLeft EN(\beta)_l^{a\La_c}(X) = a\int_{ \partial X } (-h(B_l\oplus \check{W_l},n))^+ d\Ha^{d-1} \\
&+ a^{2}\int_{ \partial X}\bigg( \Big(\tfrac{1}{2}((\varphi^{\rho}(\beta,n)-h({B}_l,n))^2 - (\varphi^{\rho}(\beta,n)+h(\check{W}_l,n))^2 )\tr \II \\
&\quad +\min_{b \in B_l^+(n)}\{\psi^{Q}(\beta;b)\}-\max_{w\in W_l^-(n)}\{\psi^{Q}(\beta; w)\}\Big) \mathds{1}_{ \big\{ h(B_l\oplus \check{W_l},n)<0 \big\}}\\
&\quad + \Big(\min_{b \in B_l^+(n)}\{\psi^{Q}(\beta;b)\}-\max_{w\in W_l^-(n)}\{\psi^{Q}(\beta; w)\}\Big)^+\mathds{1}_{ \big\{ h(B_l\oplus \check{W_l},n)=0 \big\}}\bigg)d\Ha^{d-1} \\
&+o(a^2).
\end{align*}
\end{corollary}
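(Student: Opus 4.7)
The plan is to apply Theorem \ref{second} directly to the integral expression for $EN(\beta)_l^{a\La_c}(X)$. By the general mean formula \eqref{mean}, this expectation is (up to the scaling factor implicit in the convention of the statement) a constant multiple of
\begin{equation*}
\int_{\R^d} \mathds{1}_{\{\theta^X_a(z+aB_l)\subseteq(\beta,1]\}}\mathds{1}_{\{\theta^X_a(z+aW_l)\subseteq[0,\beta]\}}\,dz,
\end{equation*}
and since $X$ is compact and $r$-regular, for $a$ small enough the integrand is supported in $\xi^{-1}_{\partial X}(A)$ for any bounded Borel $A$ containing a neighbourhood of $X$. This places the integral exactly in the setting of Theorem \ref{second}, provided the configuration sets fit into its hypotheses.

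The one point that requires care is that Theorem \ref{second} demands strictly convex $B_i, W_j$, whereas the lattice subsets $B_l, W_l$ are finite and typically not strictly convex. I would resolve this by refining the indexing: take $I := B_l$ with $B_b := \{b\}$ for each $b \in B_l$, and symmetrically $J := W_l$ with $W_w := \{w\}$ for each $w \in W_l$, all with the common threshold $\beta$ and PSF $\rho$. Each singleton is trivially strictly convex and compact, the regularity hypothesis of the corollary is exactly the one required by Theorem \ref{second} at each $c \in B_l \cup W_l$, and the product of indicators over these singletons equals the configuration indicator.

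What remains is to rewrite the first- and second-order coefficients produced by Theorem \ref{second} in the form claimed in the corollary. Since $h(\{b\},n) = \langle b,n\rangle$ and $h(\check{\{w\}},n) = -\langle w,n\rangle$, the minimum/maximum over the refined singletons collapse to
\begin{equation*}
t^H_-(0,\beta;B) = \varphi^\rho(\beta,n) - h(B_l,n), \qquad t^H_+(0,\beta;W) = \varphi^\rho(\beta,n) + h(\check W_l,n),
\end{equation*}
whose difference is $-h(B_l \oplus \check W_l,n)$. This yields the first-order term and the $\tfrac{1}{2}\tr \II$ contribution to the second-order term in the stated form. The active index sets of Theorem \ref{second} reduce to the extreme subsets $I'(n) = B_l^+(n)$ and $J'(n) = W_l^-(n)$, so the $\psi$ contributions collapse to $\min_{b\in B_l^+(n)}\psi^Q(\beta;b) - \max_{w\in W_l^-(n)}\psi^Q(\beta;w)$, matching the statement.

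The main obstacle I expect is purely bookkeeping: one must verify that the case split in Theorem \ref{second} between $\{t^H_-(0,\beta;B) > t^H_+(0,\beta;W)\}$ and $\{t^H_-(0,\beta;B) = t^H_+(0,\beta;W)\}$ corresponds, via the substitution above, precisely to $\{h(B_l \oplus \check W_l,n) < 0\}$ and $\{h(B_l \oplus \check W_l,n) = 0\}$, and that the complementary case $h(B_l \oplus \check W_l,n) > 0$ contributes nothing because the leading term vanishes there together with both indicators in the second-order term of Theorem \ref{second}. No geometric input beyond Theorem \ref{second} itself is needed.
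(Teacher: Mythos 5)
Your proposal is correct and follows exactly the route the paper intends: the corollary is obtained by instantiating Theorem \ref{second} with $I=B_l$, $J=W_l$ and singleton sets $B_b=\{b\}$, $W_w=\{w\}$ (which are trivially strictly convex, circumventing the fact that $B_l,W_l$ themselves are not), whereupon $t^H_-(0,\beta;B)=\varphi^\rho(\beta,n)-h(B_l,n)$, $t^H_+(0,\beta;W)=\varphi^\rho(\beta,n)+h(\check W_l,n)$, $I'(n)=B_l^+(n)$ and $J'(n)=W_l^-(n)$ give precisely the stated formula. Your parenthetical about the scaling convention is also warranted, since $EN(\beta)_l^{a\La_c}(X)=a^{-d}\int_{\R^d}(\cdots)\,dz$ by \eqref{mean}, so the displayed expansion holds for $a^d EN(\beta)_l^{a\La_c}(X)$ rather than $EN(\beta)_l^{a\La_c}(X)$ as literally written.
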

Here $S^\pm(n)$ is short for the support set $\{s\in S\mid h(S,\pm n )=\langle s, \pm n \rangle\}$. Comparing with the formula \cite[Theorem 4.3]{am2} for the black-and-white case, the first order term is the same, whereas the second order term now depends on $\rho$ and $\beta$. However, if $\rho $ is reflection invariant and $\beta = \frac{1}{2}$, then $\varphi^{\rho}(\beta,n)=0$ and the first line in the second order term is the same as in the black-and-white case. 

If $\supp \rho \subseteq B(D)$ and $c\in B_l\cup W_l$, then $\varphi^\rho(\beta,n)\in (-D,D)$ and hence
\begin{align*}
|\psi^{Q(x),\rho}(\beta;c)+\tfrac{1}{2} \II_x(c)|\leq \tfrac{1}{2}r^{-1}(D^2 + 2dn^2D).
\end{align*}
Thus, if $\rho$ is concentrated near $0$, so that $\theta^X(z)$ approximates the Dirac measure $\delta_z(X)$, the formula is close to the corresponding formula in the black-and-white case.

\subsection{First order bias of surface area estimators}
For surface area estimators, Theorem \ref{fest} yields:
\begin{corollary}
Let $X$, $f$, and $\rho$ be as in Theorem \ref{fest}. Then a first order expansion of
\begin{align*}
 E\hat{V}(f)_{d-1}^{a\La_c}(X) = a^{-1}\int_{\R^d} f\circ \theta_a^X d\Ha^{d-1} 
\end{align*}
is given by Theorem \ref{fest}.
\end{corollary}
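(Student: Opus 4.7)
The proof is essentially a matter of combining the formula for the mean from equation \eqref{mean} with Theorem \ref{fest}, so the proposal is correspondingly short.

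First I would recall that for an estimator of the form \eqref{n1} (i.e.\ with $n=1$ and $q=d-1$, so that $C_{0,0}^1=\{0\}$), equation \eqref{mean} specializes to
\begin{equation*}
E\hat{V}(f)_{d-1}^{a\La_c}(X) = a^{-1}\int_{\R^d} f\circ \theta_a^X(z)\, \Ha^d(dz).
\end{equation*}
This is simply the content of the first displayed equation in the corollary (with the understanding that the differential should be read as $d\Ha^d$, since the integration is over $\R^d$), and it follows from translation invariance of the Lebesgue measure: averaging a periodic sum of $f\circ\theta_a^X(az)$ over the uniform random shift $c\in C_v$ produces the integral over the fundamental cell, which when extended over the whole lattice gives the full integral over $\R^d$, weighted by $a^{-d}$ for the Jacobian of the lattice scaling.

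The hypotheses placed on $X$, $\rho$ and $f$ in the corollary are precisely those of Theorem \ref{fest}. Therefore the bracketed integral above admits the expansion
\begin{align*}
\int_{\R^d} f\circ \theta_a^X \, d\Ha^d &= a\int_{(0,1)} f\, d\mu^X \\
&\quad+ a^2\int_{\partial X}\Bigl(\tr\II \int_{(0,1)} f\, d\nu_n -\tfrac{1}{2}\int_\R f'(\theta^{H_n}_0(t;0))\int_{n^\perp}\II(z)\rho(z-tn)\, dz\, dt \\
&\qquad + f_+(\beta)\psi^Q(\beta;0) - f_-(\omega)\psi^Q(\omega;0)\Bigr) d\Ha^{d-1} + o(a^2).
\end{align*}
Multiplying through by $a^{-1}$ turns the $a$ prefactor of the first term into $1$, the $a^2$ prefactor of the curvature term into $a$, and the $o(a^2)$ remainder into $o(a)$, yielding a first order expansion of $E\hat{V}(f)_{d-1}^{a\La_c}(X)$ in the small parameter $a$ whose leading term is $\int_{(0,1)} f\, d\mu^X$ (recovering asymptotic unbiasedness under the normalization of Corollary \ref{unbint}) and whose $O(a)$ correction is exactly the integral against $\partial X$ in the display above. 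There is no additional obstacle: the only substantive work has already been done inside Theorem \ref{fest}, and the corollary is a packaging statement identifying the right-hand side of Theorem \ref{fest} as the high-resolution expansion of the design-based mean.
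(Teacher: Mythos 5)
Your proposal is correct and matches the paper's (implicit) argument: the corollary is exactly the specialization of the mean formula \eqref{mean} to $n=1$, $q=d-1$, followed by an application of Theorem \ref{fest} and division by $a$. You also rightly note that the differential in the display should be read as $d\Ha^{d}$, since the integration is over $\R^d$.
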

In particular if $\rho $ is reflection invariant, then $1-\theta^{H_n}_0(t;0)=\theta^{H_n}_0(-t;0)$. If, moreover, $f$ satisfies $f(x)=f(1-x)$, 
\begin{align*}
\int_{(0,1)}f d\nu_n&=0\\
f'(\theta_0^{H_{n}}(t;0))&= -f'(\theta_0^{H_{n}}(-t;0))\\
f_+(\beta)\psi^{Q(x)}(\beta;0)&=f_-(1-\beta)\psi^{Q(x)}(1-\beta;0).
\end{align*}
Thus the second order term in Theorem \ref{fest} vanishes. This yields:

\begin{corollary} \label{even}
For $X$, $f$, and $\rho$ as in Theorem \ref{fest} with $\rho$ reflection invariant and $f(x)=f(1-x)$,
 \begin{equation*}
E\hat{V}(f)_{d-1}^{a\La_c}(X) = \int_{(0,1)} f d\mu_X + o(a). 
\end{equation*}
\end{corollary}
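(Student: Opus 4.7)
The plan is to start from the formula \eqref{mean}, which in the special case $n=1$, $q=d-1$ gives
\begin{equation*}
E\hat{V}(f)_{d-1}^{a\La_c}(X) = a^{-1}\int_{\R^d} f\circ \theta_a^X(z)\,\Ha^d(dz),
\end{equation*}
and then apply Theorem \ref{fest} to this integral. After dividing by $a$, the first order term reproduces $\int_{(0,1)} f\,d\mu_X$ and the error is $o(a^2)/a = o(a)$, so it suffices to show that each of the three terms in the integrand of the second order contribution vanishes under the symmetry assumptions.

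I would verify the three identities listed just before the corollary in turn. For the first one, $\int_{(0,1)} f\,d\nu_n = 0$, note that under reflection invariance of $\rho$ we have $\theta_0^{H_n}(-tn) = 1-\theta_0^{H_n}(tn)$, whence $\varphi^\rho(\beta,n) = -\varphi^\rho(1-\beta,n)$; consequently $(\varphi^\rho(\beta,n)^+)^2 = (\varphi^\rho(1-\beta,n)^-)^2$, and the Lebesgue--Stieltjes measures $\nu_n^1$ and $\nu_n^2$ are images of one another under the reflection $T(\beta) = 1-\beta$. Since $f \circ T = f$, a change of variables yields $\int f\,d\nu_n^1 = \int f\,d\nu_n^2$, so their difference is zero. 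For the second identity, differentiating $f(x) = f(1-x)$ gives $f'(x) = -f'(1-x)$, so $f'(\theta_0^{H_n}(t;0))$ is an odd function of $t$; on the other hand, the substitution $z \mapsto -z$ combined with $\rho(z+tn) = \rho(-z-tn)$ and $\II_x(-z) = \II_x(z)$ shows that $t \mapsto \int_{n^\perp}\II_x(z)\rho(z-tn)\,dz$ is even. The integrand of the middle second-order term is thus an odd function of $t$, so its integral over $\R$ vanishes. For the last identity, the condition $\supp f \subseteq [\beta,\omega]$ together with $f(x)=f(1-x)$ forces $\omega = 1-\beta$, and the same change of variables $z \mapsto -z$ shows that both $\psi_1(\,\cdot\,;0)$ and $\psi_2(\,\cdot\,;0)$ are even in $t$; since $t^H(0,1-\beta;0) = -t^H(0,\beta;0)$, the ratio $\psi^Q(\beta;0)$ depends on $\beta$ symmetrically through $|t^H(0,\beta;0)|$, giving $\psi^Q(\beta;0) = \psi^Q(1-\beta;0)$. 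Combined with $f_+(\beta) = f_-(1-\beta) = f_-(\omega)$, the boundary contribution cancels.

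The bulk of the work is book-keeping once the symmetries are set up; there is no serious obstacle. The one subtlety worth being careful about is the change-of-variables argument for the Stieltjes measures $\nu_n^1, \nu_n^2$, since these are defined through functions that are not globally monotone of a fixed sign, so one should verify continuity of the distribution functions at $1/2$ (which holds because $\beta \mapsto \varphi^\rho(\beta,n)$ is continuous in the regular-value range) before invoking the pushforward identity.
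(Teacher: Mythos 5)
Your proof is correct and takes essentially the same route as the paper: apply Theorem \ref{fest} to $a^{-1}\int_{\R^d} f\circ\theta_a^X\,d\Ha^d$ and show the second-order term vanishes via the three symmetry identities, which the paper states without proof and you verify correctly (the pushforward of $\nu_n^2$ to $\nu_n^1$ under $\beta\mapsto 1-\beta$, oddness in $t$ of the middle integrand, and evenness of $\psi_1(\cdot;0)$ and $\psi_2(\cdot;0)$ combined with $t^H(0,1-\beta;0)=-t^H(0,\beta;0)$). Your added caution about continuity of the distribution functions is satisfied here since the regular-value hypothesis of Theorem \ref{fest} makes $\beta\mapsto\varphi^\rho(\beta,n)$ continuous.
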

Recall that the condition $f(x)=f(1-x)$ was already justified by Corollary \ref{asworst} in order to minimize the asymptotic  bias.  

\begin{example}
Assume $\rho $ is rotation invariant. Under the assumptions of Theorem \ref{fest}, Corollary \ref{even} shows that for the asymptotically unbiased estimators \eqref{countbw}, choosing $\omega=1-\beta$ yields the best approximations in finite high resolution. These estimators take the form $\varphi^\rho(\beta)^{-1}N_{(\beta,1-\beta)}^{a\La_c}(X)$.  
\end{example}

\subsection{Estimation of the integrated mean curvature} \label{imc}
Similarly, for estimators for $V_{d-2}$, we obtain:
\begin{corollary}
Let $X$, $f$, and $\rho$ be as in Theorem \ref{fest}. Then 
\begin{align*}
\MoveEqLeft E\hat{V}(f)_{d-2}^{a\La_c}(X) = a^{-1}\int_{(0,1)} f d\mu_X\\
 &+ \int_{\partial X}\bigg(\tr \II\int_{(0,1)}f d\nu_n - \frac{1}{2} \int_\R f'(\theta_0^{H_{n}}(t;0))\int_{n^\perp} \II(z)\rho(z-tn)dz dt \\ 
&\quad +  f_+(\beta)\psi^{Q}(\beta;0)-f_-(\omega)\psi^{Q}(\omega;0)\bigg) d\Ha^{d-1}\\
&+o(1). 
\end{align*}
In particular, $\lim_{a\to 0} E\hat{V}(f)_{d-2}^{a\La_c}(X)$ exists if $\rho $ is reflection invariant and $f$ satisfies $f(x)=-f(1-x)$.
\end{corollary}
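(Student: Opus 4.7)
The first identity is almost immediate from Theorem \ref{fest}. The plan is to unwind the mean formula \eqref{mean} in the case $q=d-2$ and $n=1$ to get
\begin{equation*}
E\hat{V}(f)_{d-2}^{a\La_c}(X) = a^{-2}\int_{\R^d} f\circ \theta_a^X\, d\Ha^{d}.
\end{equation*}
Then I would simply substitute the expansion provided by Theorem \ref{fest} and divide through by $a^{2}$; the leading term of that expansion is of order $a$, so after division it becomes the $a^{-1}\int_{(0,1)} f\, d\mu_X$ term, the order $a^2$ correction becomes the $O(1)$ boundary integral, and the $o(a^2)$ remainder becomes $o(1)$. No new analysis is required here — the content is already packed into Theorem \ref{fest}.

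The only substantive point is the second assertion, where one has to verify that under the stated symmetry hypotheses the $a^{-1}$ term vanishes, i.e.\ $\int_{(0,1)} f\, d\mu^X = 0$. Since $\mu^X$ is defined by integrating $\mu_n$ over $\partial X$, it suffices to show $\int_{(0,1)} f\, d\mu_n = 0$ for $\Ha^{d-1}$-almost every $n\in S^{d-1}$. The key identity I would invoke is the one already recorded in the paper: reflection invariance of $\rho$ gives $1-\theta^{H_n}_0(t;0)=\theta^{H_n}_0(-t;0)$. By the definition of $\varphi^\rho(\cdot,n)$ as a generalised inverse of $t\mapsto \theta^{H_n}_0(t;0)$, this translates into
\begin{equation*}
\varphi^\rho(1-\beta,n)=-\varphi^\rho(\beta,n)
\end{equation*}
for all (regular) $\beta\in (0,1)$.

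Since $\mu_n$ is the Lebesgue--Stieltjes measure associated to $\beta\mapsto -\varphi^\rho(\beta,n)$, the above identity means that $\mu_n$ is invariant under the reflection $T:\beta\mapsto 1-\beta$ on $(0,1)$: a direct check on intervals $(a,b]\subseteq(0,1)$ gives $\mu_n(T^{-1}((a,b]))=\varphi^\rho(a,n)-\varphi^\rho(b,n)=\mu_n((a,b])$. A change of variables then yields
\begin{equation*}
\int_{(0,1)} f(\beta)\, d\mu_n(\beta) = \int_{(0,1)} f(1-\beta)\, d\mu_n(\beta) = -\int_{(0,1)} f(\beta)\, d\mu_n(\beta),
\end{equation*}
where the last equality uses the antisymmetry $f(x)=-f(1-x)$. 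Hence the integral is zero, $\int_{(0,1)} f\, d\mu^X=0$, and the $a^{-1}$ term in the first part of the corollary vanishes, so the limit $\lim_{a\to 0}E\hat{V}(f)_{d-2}^{a\La_c}(X)$ exists and equals the boundary integral written in the statement.

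I do not expect any real obstacle here: the whole second-order machinery is packaged in Theorem \ref{fest}, and the symmetry argument is formally identical to the one already used in Corollary \ref{even} for the surface area case, except that now antisymmetry of $f$ kills the leading $a^{-1}$ term of a $V_{d-2}$-estimator rather than the order $a$ correction of a $V_{d-1}$-estimator. The only care to take is to check the measure-reflection identity $T_*\mu_n=\mu_n$ on the intersection-stable generator of half-open intervals, which is routine given that $\beta_i,\omega_j$ in Theorem \ref{fest} are assumed to be regular values so $\varphi^\rho(\cdot,n)$ is continuous where needed.
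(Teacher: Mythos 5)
Your proposal is correct and follows the paper's intended route: the expansion is just Theorem \ref{fest} divided by $a^{2}$ via the mean formula \eqref{mean} with $q=d-2$, and the vanishing of the $a^{-1}$ term under reflection invariance and antisymmetry is exactly the symmetry argument the paper uses implicitly (the identity $\varphi^{\rho}(1-\beta,n)=-\varphi^{\rho}(\beta,n)$ and the resulting reflection invariance of $\mu_n$, which is legitimate here since the regular-value hypothesis makes $\varphi^{\rho}(\cdot,n)$ continuous). No gaps.
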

Suppose $\rho$ is rotation invariant and $f$ is as in Theorem \ref{fest} with $f(x)=-f(1-x)$. In particular,  $\omega =1-\beta$ for some $\beta \in (0,\frac{1}{2})$. Then we have 
\begin{align*}
\MoveEqLeft  \int_{\varphi(1-\beta)}^{\varphi(\beta)} f'(\theta_0^{H}(t;0))\int_{n^\perp} \II(z)\rho(z-tn)dz dt\\
&= \int_{-\varphi(\beta)}^{\varphi(\beta)} f'(\theta_0^{H}(t;0)) \int_0^\infty \int_{S^{d-2}} \II(u)r^2 \rho_t(r) r^{d-2} \Ha^{d-2}(du)dr dt\\
&= \kappa_{d-1} \tr \II  \int_{-\varphi(\beta)}^{\varphi(\beta)} f'(\theta_0^{H}(t;0)) \int_{n^\perp} |z|^2\rho(z-tn) dzdt
\end{align*}
where for a fixed $t\in \R$, $\rho_t$ is the function $\rho(z-tn)=\rho_t(|z|)$ for $z\in n^\perp$. Moreover, 
\begin{align*}
\MoveEqLeft f_+(\beta)\psi^{Q}(\beta;0)-f_-(1-\beta)\psi^{Q}(1-\beta;0)=2f_+(\beta)\psi^{Q}(\beta;0)\\
&=f_+(\beta)\varphi'(\beta)\int_{0}^\infty \int_{S^{d-2}}   \II(u)r^2  \rho_{\varphi(\beta)}( r )r^{d-2} \Ha^{d-2}(du)dr\\
&=\kappa_{d-1} \tr \II f_+(\beta)\varphi'(\beta)\int_{n^\perp}  |z|^2 \rho(z -\varphi(\beta)n) dz .
\end{align*}
Introducing the constants
\begin{align*}
c_1&=\int_{(0,1)}f d\nu_n=\int_{-\varphi(\beta)}^{\varphi(\beta)} t f\circ \theta^{H}_0(t;0) dt\\
c_2&=\kappa_{d-1} f_+(\beta)\varphi'(\beta)\int_{n^\perp}  |z|^2 \rho(z -\varphi(\beta)n) dz \\
c_3&=-\frac{\kappa_{d-1} }{2}\int_{-\varphi(\beta)}^{\varphi(\beta)} f'(\theta_0^{H}(t;0)) \int_{n^\perp}  |z|^2\rho(z-tn) dz dt,
\end{align*}
we obtain:
\begin{corollary}
For $X$, $f$, and $\rho$ as in Theorem \ref{fest} with $\rho $ rotation invariant and $f(x)=-f(1-x)$,
\begin{equation*}
\lim_{a\to 0} E\hat{V}(f)_{d-2}^{a\La_c}(X)  = (c_1+c_2+c_3) \int_{\partial X}\tr \II d\Ha^{d-1}  = 2\pi(c_1+c_2+c_3) V_{d-2}(X)
\end{equation*}
In particular, $\hat{V}(f)_{d-2}$ is asymptotically unbiased if and only if $c_1+c_2+c_3=(2\pi)^{-1}$.
\end{corollary}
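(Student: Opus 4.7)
The plan is to invoke the preceding corollary, which gives the expansion $E\hat{V}(f)_{d-2}^{a\La_c}(X) = a^{-1}\int_{(0,1)} f \, d\mu_X + \int_{\partial X}\Phi_n \, d\Ha^{d-1} + o(1)$, and to reduce the boundary integrand $\Phi_n$ to a constant multiple of $\tr \II$ by exploiting rotation invariance of $\rho$ together with the antisymmetry $f(x) = -f(1-x)$. Since the preceding corollary already guarantees that the limit exists under these hypotheses, the $a^{-1}$ term must vanish; one verifies this directly by noting that rotation invariance makes $\mu_n =: \mu$ and $\varphi^{\rho}(\beta, n) =: \varphi(\beta)$ independent of $n$ with $\varphi(1-\beta) = -\varphi(\beta)$, so that substituting $\beta \mapsto 1-\beta$ in $\int f\,d\mu = \int_0^1 f(\beta)(-\varphi'(\beta))\,d\beta$, together with $f(1-\beta) = -f(\beta)$ and $\varphi'(1-\beta) = \varphi'(\beta)$, shows the integral equals its own negative.

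For the three remaining contributions to $\Phi_n$, I would pass to polar coordinates on $n^\perp$. Because $\rho(z - tn) = \tilde\rho(\sqrt{|z|^2 + t^2})$ for $z \in n^\perp$, and since $\int_{S^{d-2}} \II_x(u)\,\Ha^{d-2}(du) = \kappa_{d-1}\tr\II_x$ (as $\int u_i u_j\,\Ha^{d-2}(du)$ is proportional to $\delta_{ij}$), every integral of the form $\int_{n^\perp} \II(z)\rho(z - tn)\,dz$ factors as $\tr \II_x$ times a scalar depending only on $t$ and $\rho$. Combined with the substitution $t = \varphi(\beta)$ where appropriate, this turns $\tr\II\int f\,d\nu_n$ into $c_1\tr\II$ and the $f'$-integral into $c_3\tr\II$.

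For the boundary $\psi$-terms, the support condition $\supp f\subseteq [\beta,\omega]$ combined with $f(x) = -f(1-x)$ forces $\omega = 1-\beta$, and $f_-(1-\beta) = -f_+(\beta)$. Rotation invariance gives $\rho(z, -t) = \rho(z, t)$ for $z \in n^\perp$, so the integrands defining $\psi_1$ and $\psi_2$ at $s=0$ are even in $t$; combined with $\II(-z) = \II(z)$ and $t^{H}(0, 1-\beta; 0) = -\varphi(\beta)$, this yields $\psi^{Q}(1-\beta; 0) = \psi^{Q}(\beta; 0)$. Hence
$f_+(\beta)\psi^{Q}(\beta;0) - f_-(1-\beta)\psi^{Q}(1-\beta;0) = 2 f_+(\beta)\psi^{Q}(\beta;0)$,
and the same polar reduction identifies this quantity with $c_2\tr\II$.

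Summing the three contributions, the integrand of the boundary term equals $(c_1+c_2+c_3)\tr\II$, so the boundary integral becomes $(c_1+c_2+c_3)\int_{\partial X}\tr\II\,d\Ha^{d-1} = 2\pi(c_1+c_2+c_3)V_{d-2}(X)$, which is the stated limit; the characterization of asymptotic unbiasedness is then immediate. The principal technical hurdle is the careful symmetry bookkeeping for $\psi^{Q}$ and the consistent reduction to polar form, but no new ideas are required beyond those already developed for Theorem \ref{fest}.
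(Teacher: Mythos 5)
Your proposal is correct and follows essentially the same route as the paper: it invokes the expansion from the preceding corollary, kills the $a^{-1}$ term via the antisymmetry $f(x)=-f(1-x)$ together with $\varphi(1-\beta)=-\varphi(\beta)$, and reduces each curvature integral to $\tr\II$ times a scalar using polar coordinates on $n^\perp$ and the identity $\int_{S^{d-2}}\II(u)\,\Ha^{d-2}(du)=\kappa_{d-1}\tr\II$, with the boundary terms combining to $2f_+(\beta)\psi^{Q}(\beta;0)$ exactly as in the text. The symmetry bookkeeping for $\psi^{Q}(1-\beta;0)=\psi^{Q}(\beta;0)$ is the same as the paper's, so no further comment is needed.
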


\begin{example}
Let $f(x)=(x-\frac{1}{2})\mathds{1}_{(\beta, 1-\beta)}$. Then
\begin{align*}
c_1&=\int_{-\varphi(\beta)}^{\varphi(\beta)} t (\theta^H_0(t;0)-\tfrac{1}{2}) dt\\
&= \int_{-\varphi(\beta)}^{\varphi(\beta)} t\int_{-\infty}^{-t} \int_{n^\perp} \rho(z+sn)dzds dt\\
&= \int_{-\infty}^{\varphi(\beta)}\int_{-\varphi(\beta)}^{(-s)\wedge \varphi(\beta)}  tdt \int_{n^\perp} \rho(z+sn)dz ds\\
&= \int_{-\varphi(\beta)}^{\varphi(\beta)}\tfrac{1}{2}(s^2-\varphi(\beta)^2) \int_{n^\perp} \rho(z+sn)dz ds\\
&= \frac{1}{2}\int_{-\varphi(\beta)}^{\varphi(\beta)}\int_{n^\perp} s^2\rho(z-sn) dz ds+ \varphi(\beta)^2 (\beta-\tfrac{1}{2}).
\end{align*}
It follows that 
\begin{equation*}
c_1+c_2+c_3 = d_1(\varphi(\beta))d_1'(\varphi(\beta))^{-1}d_2'(\varphi(\beta))-d_2(\varphi(\beta))
\end{equation*}
where
\begin{align*}
d_1(t)&=\tfrac{1}{2}(\theta^H_0(t;0)-\theta^H_0(-t;0))\\
d_2(t)&=\frac{1}{2}\int_{-t}^{t}\int_{n^\perp}(\kappa_{d-1} |z|^2-s^2)\rho(z-sn) dzds.
\end{align*}
However, $d_2'(0)>0$ and $d_2'(t)<0$ for $t^2 \geq \frac{D^2}{1+\kappa_{d-1} }$ where $\supp \rho \subseteq B(D)$. 
By continuity and the fact that $d_2(0)=0$, $d_2$ must have a local maximum at some $t_0\in \Big(0,\frac{D}{\sqrt{1+\kappa_{d-1} }}\Big)$ with $d_2(t_0)>0$. Hence $c_1 + c_2 + c_3 \neq 0$ for $\beta $ in some neighborhood of $ \beta_0=\theta^H_0(t_0;0)$. 

It follows that the function 
\begin{equation*}
f(x)=(2\pi(c_1+c_2+c_3))^{-1}\big(x-\tfrac{1}{2}\big)\mathds{1}_{(\beta_0, 1-\beta_0)}(x)
\end{equation*}
yields an asymptotically unbiased estimator for $V_{d-2}$. If $\rho $ is known, the constants $c_1$, $c_2$, $c_3$, and $\beta_0$ can be determined directly by the above, otherwise these constants could be determined experimentally.
\end{example}

\begin{example}
A similar argument shows that also the estimator $N_{(\beta,\frac{1}{2})}- N_{(\frac{1}{2},1-\beta)}$ is asymptotically unbiased up to some constant factor which is non-zero for a suitable $\beta \in (0,\frac{1}{2})$. This estimator has the same advantage as \eqref{countbw} that it can be applied even if the grey-values are only known discretely.
\end{example}

\section{Discussion}\label{discuss}
To judge from the results of this paper, it seems that the blurring of digital images should be considered a help rather than an obstacle to the estimation of intrinsic volumes. The biasedness of local algorithms in the black-and-white case can be viewed as a consequence of the rotational asymmetry of the $n\times \dotsm \times n$ pixel configurations when $n>1$. For $n=1$  there is only one estimator, namely the volume estimator, which is well known to be unbiased. In the grey-scale setting, choosing $n=1$, thus avoiding the asymmetry, leads to a wide range of estimators, allowing instead an exploitation of the symmetry of a rotation invariant PSF to obtain information about the lower intrinsic volumes.

One should keep in mind, however, that the results of this paper are only asymptotic and say nothing about how the suggested algorithms work in finite resolution. Especially because of the assumptions on the asymptotic behaviour of the PSF. Moreover, it is not possible to say much from the asymptotic results about which algorithms work best in practice.  For instance, it is not clear how to choose $\beta $ best possible for the estimator $N_{(\beta, 1-\beta)}$.  Thus local grey-scale algorithms should be carefully studied and tested in finite resolution before being taken into use.

In some practical applications it may be possible to adjust the PSF, for instance if the PSF has the form $\rho_B$. The results of this paper could be used to design measurements such that the suggested algorithms apply, for instance by choosing a PSF of the form $\rho_B$ with $B$ rotation invariant rather than the classical $\rho_{C_0}$. 

From the mathematical viewpoint, the proven existence of asymptotically unbiased estimators for intrinsic volumes $V_q$ with $q=d,d-1,d-2$ naturally raises the question whether it stops here or generalizes to the remaining $V_q$ with $q<d-2$. A proof would probably require some stronger smoothness assumptions on both $X$, $\rho$, and $f$ and maybe a whole different approach. 

\section*{Acknowledgements}
The author was supported by the Centre for Stochastic Geometry and Advanced Bioimaging, fund\-ed by the Villum Foundation.
The author is wishes to thank Markus Kiderlen for helping with the set-up of this research project and for useful input along the way.

\end{document}